\documentclass[11pt,a4paper,twoside,reqno]{amsart}

\usepackage[T1]{fontenc}
\usepackage{lmodern}
\usepackage[
a4paper,
inner=31mm,
outer=34mm,
top=30mm,
bottom=32mm,
headsep=8mm
]{geometry}
\usepackage{amsmath,amssymb,mathtools,bm}
\usepackage[expansion=false]{microtype}
\usepackage{graphicx}
\usepackage{placeins}
\usepackage{comment}
\usepackage[dvipsnames]{xcolor}
\usepackage{hyperref}
\usepackage[nameinlink,capitalise,noabbrev]{cleveref}
\usepackage[english]{babel}
\hypersetup{
colorlinks=true,
linkcolor=red,
citecolor=green,
pdfborder={0 0 0},
bookmarksopen=true,
bookmarksnumbered=true,
pdftitle={Equal-Area Cubed-Sphere Partitions and Spherical Designs},
pdfauthor={Anonymous},
pdfsubject={Spherical designs from equal-area cubed-sphere partitions},
pdfkeywords={spherical design, cubed sphere, Karcher center, Bernstein inequality, Brouwer degree}
}

\theoremstyle{plain}
\newtheorem{theorem}{Theorem}[section]
\newtheorem{lemma}[theorem]{Lemma}

\theoremstyle{definition}

\newtheorem{celldefinition}{Definition}[section]

\newtheorem{remark}[theorem]{Remark}
\crefname{celldefinition}{Definition}{Definitions}

\numberwithin{equation}{section}

\newcommand{\Sph}{\mathbb S^2}
\newcommand{\R}{\mathbb R}
\newcommand{\dd}{\,\mathrm d}
\newcommand{\dist}{\operatorname{dist}}
\newcommand{\grad}{\nabla}
\newcommand{\Rot}{\mathcal L}
\newcommand{\op}{\mathrm{op}}
\newcommand{\tr}{\operatorname{tr}}
\title[Spherical $t$-Designs on $\mathbb S^2$ with $54t^2$ Points]{Spherical $t$-Designs on $\mathbb S^2$ with $54t^2$ Points}
\author{Zhiqiang Xu}
\address{State Key Laboratory of Mathematical Sciences, Academy of Mathematics and Systems Science, Chinese Academy of Sciences, Beijing 100190, China;    School of Mathematical Sciences, University of Chinese Academy of Sciences, Beijing 100049, China. }
\email{xuzq@lsec.cc.ac.cn}
\author{Zili Xu}
\address{School of Mathematical Sciences,  Key Laboratory of MEA (Ministry of Education), Shanghai Key Laboratory of PMMP, and Nantong Institute for Applied Mathematics and Artificial Intelligence, East China Normal University, Shanghai 200241, China}
\email{zlxu@math.ecnu.edu.cn}

\date{}

\begin{document}

\begin{abstract}
We prove that, for every integer $t\geq 1$, the unit two-sphere admits a spherical $t$-design consisting of exactly $54t^2$ points. More generally, such a design exists with exactly $6q^2$ points for every integer $q\geq 3t$. The proof builds on the topological degree method of Bondarenko, Radchenko and Viazovska, using an explicit equal-area partition of the sphere based on the map of Ro\c{s}ca and Plonka. By choosing the cell centers to minimize the average squared geodesic distance and deriving sharper sampling estimates, we obtain the stated quadratic bound on the size of spherical $t$-designs on $\Sph$.

\end{abstract}
\maketitle

\section{Introduction}
\label{sec:introduction}

Let $\mathbb S^d=\{\bm x\in\mathbb R^{d+1}:\|\bm x\|_{\ell_2}=1\}$ be equipped with normalized surface measure $\sigma_d$.  A set of points $\bm x_1,\ldots,\bm x_N\in \mathbb S^d$ is a \emph{spherical $t$-design} if
\[
\frac1N\sum_{i=1}^N P(\bm x_i)
=\int_{\mathbb S^d}P(\bm x)\dd\sigma_d(\bm x)
\]
for every polynomial $P(\bm x)$ in $d+1$ variables of total degree at most $t$. Thus spherical designs are equal-weight cubature formulas that exactly integrate every spherical polynomial up to a prescribed degree.  The notion of spherical designs was introduced by Delsarte, Goethals, and Seidel~\cite{DGS1977}.

Write $\mathcal N(d,t)$ for the smallest possible number of points in a spherical $t$-design on $\mathbb S^d$.  The Fisher-type lower bound of Delsarte, Goethals, and Seidel~\cite{DGS1977} gives
\[
\mathcal N(d,t)\ge
\begin{cases}
\displaystyle\ \binom{d+k}{d}+\binom{d+k-1}{d},&\text{if $t=2k$},\\[5pt]
\displaystyle\ 2\ \binom{d+k}{d},&\text{if $t=2k+1$}.
\end{cases}
\]
In particular, when $d=2$ we have
\[
\mathcal N(2,t)\ge
\frac{1}{4}t^2+o(t^2),
\qquad\text{as }t\to\infty.
\]

Seymour and Zaslavsky~\cite{SeymourZaslavsky1984} proved that, for each $d,t\geq 1$, spherical $t$-designs on $\mathbb S^d$ exist with every sufficiently large number of points. Following earlier quantitative constructions by Wagner and Volkmann~\cite{Wagner1991} and Bajnok~\cite{Bajnok1992}, Korevaar and Meyers~\cite{KorevaarMeyers1993} established
\[
\mathcal N(d,t)\le C_d t^{d(d+1)/2},
\]
where $C_d>0$ depends only on $d$. They further conjectured that the exponent could be reduced to $d$, which would give the optimal order of growth in $t$. Bondarenko and Viazovska~\cite{BondarenkoViazovska2010} developed a construction of spherical designs using partitions of the sphere, sampling estimates, and the Brouwer fixed-point theorem. Bondarenko, Radchenko, and Viazovska then proved the optimal-order existence theorem~\cite{BRV2013}: for each fixed $d$ there is a constant $C_d>0$ such that, whenever $N\ge C_d t^d$, there exists a spherical $t$-design with exactly $N$ points.  They also proved that the points may be chosen well separated~\cite{BRV2015}. The same architecture has subsequently been extended to compact algebraic manifolds~\cite{EtayoMarzoOrtega2018}, compact Riemannian manifolds~\cite{GariboldiGigante2021}, and cubature formulas with prescribed positive weights~\cite{EhlerEtayoGariboldiGigantePeter2021}. 

\subsection{Spherical $t$-designs on $\mathbb S^2$}
We focus on the two-dimensional sphere $\mathbb S^2\subset\mathbb R^3$. As an idealized model of the Earth's surface and a natural representation of directions in three-dimensional space, $\mathbb S^2$ is an important domain for numerical integration in geophysics and other applications involving directional data. Its connections with polyhedral geometry also motivate the study of spherical designs with few points. Our aim is to obtain a small explicit constant $C_2$ such that $\mathcal N(2,t)\leq C_2t^2$ for every integer $t\geq 1$. We begin by reviewing earlier work on spherical $t$-designs on $\mathbb S^2$, with particular emphasis on bounds for the number of points.

Hardin and Sloane \cite{HardinSloane1996} carried out an extensive study of spherical designs on $\mathbb S^2$, producing several exact constructions as well as highly accurate numerical examples.  Their computations led them to a conjecturally infinite family of spherical $t$-designs whose cardinalities satisfy
\[
N=\frac{1}{2}t^2+o(t^2)
\qquad\text{as }t\to\infty.
\]
Consequently,  their evidence suggests the conjectural asymptotic upper bound
\[
{\mathcal N}(2,t)\leq \frac{1}{2}t^2+o(t^2).
\]

Another conjecture, formulated by Chen and Womersley \cite{ChenWomersley2006}, asserts that for every positive integer $t$, there exists a spherical $t$-design on $\mathbb S^2$ consisting of exactly $N=(t+1)^2$ points. More precisely, they conjectured the existence of such a design in a neighborhood of an extremal fundamental system.  Using rigorous interval-arithmetic computations, Chen, Frommer, and Lang \cite{ChenFrommerLang2011} subsequently verified this conjecture for $1\leq t\leq 100$, whereas the assertion for arbitrary $t$ remains open.

Later,  Zhou and Chen introduced spherical $t_\varepsilon$-designs, which permit positive weights close to equal weights, and proved approximation results for this more flexible notion~\cite{ZhouChen2018}.  Womersley reported extensive numerical equal-weight configurations with favorable separation and covering properties, including general designs through strength $180$ and antipodal designs through strength $325$~\cite{Womersley2018}.  

These works provide valuable constructions and numerical evidence supporting the existence of spherical designs with few points, but the smallest constant $C_2$ for which $\mathcal N(2,t)\leq C_2t^2$ holds for every integer $t\geq 1$ remains unknown.

\subsection{Constants in the Bondarenko-Radchenko-Viazovska bound
on $\mathbb S^2$}
\label{sec:brv-constants}

Bondarenko, Radchenko, and Viazovska proved that for each fixed dimension $d$, there exists a constant $C_d>0$ such that a spherical $t$-design on $\mathbb S^d$ with exactly $N$ points exists whenever $N\geq C_dt^d$~\cite{BRV2013}. Their proof gives the sufficient condition
\[
C_d>\left(\frac{54dK_d}{r_d}\right)^d.
\]
Here, $K_d$ is the smallest constant such that for every $N$, there exists an equal-area partition $(R_1,\ldots,R_N)$ of $\mathbb S^d$ whose cells have geodesic diameters at most $K_dN^{-1/d}$. The constant $r_d>0$ is a sampling threshold obtained by applying Theorem~3.1 of Mhaskar, Narcowich, and Ward~\cite{MNW2001} with $\eta=1/2$ and $p=1$. The restriction in \cite[equation~(3.26)]{MNW2001} gives $r_d\leq 1/2$. Consequently, any constant $C_d$ satisfying the above sufficient condition must also satisfy $C_d>(108dK_d)^d$. In particular, $C_2>(216K_2)^2$.

Rakhmanov, Saff and Zhou~\cite{RakhmanovSaffZhou1994} proved that for every $N\geq 2$, there exists an equal-area partition $(R_1,\ldots,R_N)$ of $\mathbb S^2$ whose cells have Euclidean diameters at most $7N^{-1/2}$.  They also showed that any uniform Euclidean diameter constant is at least $4$. Since Euclidean distance does not exceed geodesic distance, we have $K_2\ge4$. Consequently, any constant $C_2$ satisfying the Bondarenko-Radchenko-Viazovska sufficient condition necessarily obeys
\[
C_2>
 (216\cdot4)^2
=746496.
\]

\subsection{Our contributions}

We combine an explicit cubed-sphere partition with sharper sampling estimates to establish the bound
\[
\mathcal N(2,t)\leq 54t^2
\qquad\forall t\geq 1.
\]
More precisely, we prove the following theorem.

\begin{theorem}\label{thm:coarse-main}
For every integer $t\geq 1$ and every integer $q\geq 3t$, there exists a spherical $t$-design on $\Sph$ with exactly $6q^2$ points. In particular, taking $q=3t$ gives a spherical $t$-design with exactly $54t^2$ points.
\end{theorem}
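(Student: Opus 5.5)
We follow the Bondarenko--Radchenko--Viazovska degree-theoretic scheme, with every constant made explicit for a specific partition.

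\emph{The partition and base points.} For $q\ge 3t$ we take the equal-area cubed-sphere partition of $\Sph$ into $N=6q^2$ cells. It is obtained by splitting each face of the cube into a $q\times q$ grid and transporting the grid to the sphere by the area-preserving map of Ro\c{s}ca and Plonka. Each cell $R_i$ then has area $4\pi/N$. We then compute explicit bounds on the geodesic diameter of each cell and on its second moment about a chosen center. For the center $\bm x_i$ of $R_i$ we take a Karcher (Fr\'echet) mean, i.e.\ a minimizer of $\int_{R_i}\dist(\bm x_i,\bm y)^2\dd\sigma(\bm y)$. This choice makes the first-order term in the Taylor expansion of $P(\bm y)-P(\bm x_i)$ around $\bm x_i$ integrate to zero over $R_i$ along geodesics. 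The sampling error is then governed by the second moment rather than the diameter, which is where we gain over the diameter-based estimates.

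\emph{The degree argument.} Let $\mathcal P_t$ be the space of spherical polynomials of degree $\le t$ with mean zero, and equip it with the $L^2$ inner product. Its dimension is $(t+1)^2-1$. Each $P\in\mathcal P_t$ has gradient $\grad P\in\mathcal P_t$-valued tangent field. We consider the open set $\Omega=\{P\in\mathcal P_t:\int_{\Sph}\|\grad P\|\dd\sigma<1\}$. Following BRV, we build a continuous map $P\mapsto(\bm x_1(P),\dots,\bm x_N(P))$ by flowing each center $\bm x_i$ along the normalized field $\grad P/(\|\grad P\|+\varepsilon)$ for unit time, and we define $F(P)=\sum_i Q_{\bm x_i(P)}$ via the Riesz representation $\langle Q_{\bm x},R\rangle = R(\bm x)$. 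A design corresponds to a zero of $F$. By the Brouwer degree (or a Poincar\'e--Miranda-type) argument, it suffices to verify $\langle F(P),P\rangle>0$ on $\partial\Omega$. Expanding gives
\[
\langle F(P),P\rangle=\sum_i P(\bm x_i(P))=\sum_i\bigl(P(\bm x_i(P))-P(\bm x_i)\bigr)+\sum_i P(\bm x_i).
\]
The first sum is, up to an error, $\int_0^1\sum_i\|\grad P(\bm x_i(s))\|\dd s$, which is roughly $\frac{N}{4\pi}\int\|\grad P\|=\frac{N}{4\pi}$ on $\partial\Omega$ after a second sampling estimate, now for $\|\grad P\|$ along moved points. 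The second sum equals $N\int P\dd\sigma=0$ plus a quadrature error.

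\emph{The two sampling estimates (main obstacle).} We must show explicitly that
\[
\Bigl|\sum_i P(\bm x_i)-\tfrac{N}{4\pi}\int P\Bigr| \quad\text{and}\quad \Bigl|\sum_i\|\grad P(\bm y_i)\|-\tfrac{N}{4\pi}\int\|\grad P\|\Bigr|
\]
are each a fraction well below $1/2$ of $\frac N{4\pi}\int\|\grad P\|$ whenever $q\ge 3t$. The second must hold uniformly for $\bm y_i$ within distance $1/\,$something of the cell. For the first, we use the Karcher centering to reduce the error to $\sum_i\int_{R_i}\tfrac12\dist^2\sup\|\nabla^2 P\|$. We then bound the Hessian by Bernstein inequalities on the sphere: $\|\nabla^2P\|\lesssim t\,\|\grad P\|$ in an $L^1$ / Marcinkiewicz--Zygmund sense on caps. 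This requires local Bernstein--Nikolskii estimates with explicit constants, derived by restricting to great circles, where $P$ is a trigonometric polynomial of degree $t$. For the second estimate, we use $|\,\|\grad P(\bm y)\|-\|\grad P(\bm z)\|\,|\le\dist(\bm y,\bm z)\sup\|\nabla^2P\|$ together with the cell diameter and the flow displacement. Getting all constants small enough that the threshold is exactly $q\ge3t$ is the crux. We would first try to bound the cell diameters of the Ro\c{s}ca--Plonka grid by $c/q$ with explicit $c$ (the worst cells sit near the cube corners), then optimize the flow time and $\varepsilon$. If the resulting margins are positive, the degree theorem yields a zero of $F$ in $\Omega$, hence a $t$-design with $6q^2$ points. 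Taking $q=3t$ gives $54t^2$.
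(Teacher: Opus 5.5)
Your outline has the same architecture as the paper: the Ro\c{s}ca--Plonka grid with $N=6q^2$ cells, Karcher centers so that the linear Taylor term integrates to zero, a regularized gradient flow, and the degree criterion on $\{\|P\|<1\}$. But it stops exactly where the content of the theorem begins. There is a genuine gap: you never prove the two sampling estimates with explicit constants, and you state the conclusion only conditionally (``if the resulting margins are positive''). Designs with $O(t^2)$ points are already known from Bondarenko--Radchenko--Viazovska, so the entire statement is that these margins are positive at $q=3t$.

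The tools you propose for that step are also the costly ones. You bound the remainder by $\dist^2\cdot\sup_{R_i}\|\nabla^2P\|$ and then control cellwise suprema through explicit local Bernstein--Nikolskii or Marcinkiewicz--Zygmund inequalities on caps. That is the diameter-based mechanism; with the available constants (those of Mhaskar--Narcowich--Ward, as used by BRV) it forces thresholds like $C_2>746496$. A global supremum of the Hessian would lose a further Nikolskii factor of order $t^2$ against $\|P\|=\int_{\Sph}\|\grad P\|\dd\sigma$. Your flow is also mis-scaled: at speed up to $1$ for unit time, points travel a distance of order one, so they cannot stay within $O(1/q)$ of their cells. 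The flow time must be of order $1/t$; the paper takes $s_*=2\ln(2-\beta_1-\beta_2)/(\pi t)$.

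The ideas that actually make $q\ge 3t$ work are the following.

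\emph{Rotational derivatives.} The paper uses
$\|\grad P(\bm x)\|_{\ell_2}=2\int|\Rot_{\bm u}P(\bm x)|\dd\sigma(\bm u)$,
$\|\nabla^2P(\bm x)\|_{\op}\le2\int\|\grad\Rot_{\bm u}P(\bm x)\|_{\ell_2}\dd\sigma(\bm u)$, and
$\int\|\Rot_{\bm u}P\|\dd\sigma(\bm u)\le\frac{\pi t}{4}\|P\|$.
The last comes from Zygmund's $L^1$ Bernstein inequality on great circles, averaged over $\mathrm{SO}(3)$. Because $\Rot_{\bm u}P\in\Pi_t^0$, each Hessian bound becomes a gradient bound for a degree-$t$ polynomial, which can be estimated again along the geodesic to $\bm c_i$. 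Iterating this, with no suprema at all, bounds the gradient sampling error by $\mathcal R_{\mathrm{max}}\beta_1\|H\|$ and the quadrature error by $\frac{2\beta_2}{\pi t}\mathcal R_{\mathrm{max}}\|Q\|$. Here $\beta_j$ are exponential-type tails of the distance moments.

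\emph{A bootstrap.} Applying the quadrature bound to $\Rot_{\bm u}H$ gives $\mathcal R_{\mathrm{max}}\le1+\beta_2\mathcal R_{\mathrm{max}}$, hence $\mathcal R_{\mathrm{max}}\le1/(1-\beta_2)$.

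\emph{No proximity to the cells along the flow.} The sampled Hessian average is at most $\frac{\pi t}{2}$ times the current gradient sampling constant. Gronwall then gives $\mathcal R_{\mathrm{max}}(s)\le e^{\pi ts/2}/(1-\beta_2)$, so for $\|P\|=1$ one has $\frac1N\sum_i\|\grad P(\bm y_i(P,s))\|_{\ell_2}\ge(2-\beta_1-\beta_2-e^{\pi ts/2})/(1-\beta_2)$. This yields the criterion $\beta_1+\beta_2<1$ and $\beta_2<h(2-\beta_1-\beta_2)$, with $h(x)=x\ln x+1-x$.

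\emph{Explicit cell geometry.} The paper proves $C_i\subset B(\bm c_i,15/\sqrt{8N})$ and $\int_{C_i}\dist(\bm c_i,\bm y)^2\dd\sigma(\bm y)<8/(3N^2)$. These follow from $\tr\bm H<7/6$, $\lambda_{\max}(\bm H)<27/32$, a weighted Poincar\'e inequality on the parameter square, and a chord--arc comparison. For $t/q\le1/3$ they give $\beta_2<13/140$ and $\beta_1+\beta_2<7/13$, so the criterion holds.

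Finally, this geometric lemma needs $q\ge6$. The case $t=1$, $q\in\{3,4,5\}$ must therefore be treated separately, for example by antipodal pairs.
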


The constant $54$ is not optimal within this framework.  We choose the constant $54$ to keep the proof transparent and its quantitative estimates verifiable. All estimates follow from analytic inequalities without interval-arithmetic certificates.  This choice allows us to present the main ideas of the method clearly, without lengthy optimizations of intermediate constants.  By refining the geometric and distance-moment estimates for the partition cells and incorporating rigorous computer-assisted calculations, we can reduce the constant to $35$. The smallest constant attainable within this framework remains unknown. A natural question is whether further refinements can establish $\mathcal N(2,t)\le C_2t^2$ for all sufficiently large integers $t$, with an explicit constant $C_2\le1$.

Bondarenko, Radchenko, and Viazovska proved that there exists a constant $C_2>0$ such that, for every integer $t\geq 1$ and every integer $N\geq C_2t^2$, a spherical $t$-design on $\Sph$ with exactly $N$ points exists \cite{BRV2013}. Their result covers every integer above the stated threshold. Theorem~\ref{thm:coarse-main} applies to the node counts $N=6q^2$ with $q\geq 3t$ and gives the explicit bound $\mathcal N(2,t)\leq 54t^2$. The coefficient $54$ is substantially smaller than the threshold required by the particular estimates in Section~\ref{sec:brv-constants}.

It is possible to extend our argument to every integer $N\geq 54t^2$ by refining the partition construction. Such an extension would require equal-area partitions into $N$ cells with geometric and moment estimates strong enough to satisfy the positivity criterion in our proof. We leave this question for future work.

\subsection{Proof strategy and refinements of the Bondarenko-Radchenko-Viazovska approach}
\label{sec:degree-criterion}

Our proof follows the topological framework of Bondarenko, Radchenko, and Viazovska ~\cite[Section~2]{BRV2013} (see also \cite[Section~2]{BRV2015}). The idea is to associate with each mean-zero spherical polynomial a configuration of points that depends continuously on the polynomial. If the average value of the polynomial at these points is positive on the boundary of a suitable norm ball, a topological argument produces a configuration satisfying all the spherical design conditions simultaneously. The relevant topological result is the following consequence of Brouwer degree theory~\cite{ORegan2006}.

\begin{theorem}\label{thm:brouwer-degree-criterion}
Let $\Omega\subset\R^n$ be a bounded open set containing the origin, and let $f:\overline{\Omega}\to\R^n$ be continuous. If
\[
\langle \bm{x},f(\bm{x})\rangle>0
\qquad\forall\bm{x}\in\partial\Omega,
\]
then there exists $\bm{x}_0\in\Omega$ such that $f(\bm{x}_0)=0$.
\end{theorem}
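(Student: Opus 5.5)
We argue by contradiction and reduce to the classical degree-theoretic statement. Suppose $f(\bm x)\neq 0$ for every $\bm x\in\overline{\Omega}$; in particular $f$ has no zero on $\partial\Omega$. Then the Brouwer degree $\deg(f,\Omega,0)$ is well defined. We compare $f$ with the identity map through the straight-line homotopy
\[
H(\bm x,s)=(1-s)\,\bm x+s\,f(\bm x),\qquad \bm x\in\overline{\Omega},\ s\in[0,1].
\]

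The key step is to show that $H(\cdot,s)$ has no zero on $\partial\Omega$ for any $s\in[0,1]$. For $\bm x\in\partial\Omega$ we compute
\[
\langle \bm x,H(\bm x,s)\rangle=(1-s)\|\bm x\|^2+s\,\langle \bm x,f(\bm x)\rangle .
\]
Both terms are nonnegative. The sum is strictly positive: if $s<1$, then $\bm x\neq 0$ because $0\in\Omega$ and $\Omega$ is open, so $0\notin\partial\Omega$ and the first term is positive; if $s=1$, the hypothesis makes the second term positive. Hence $H(\bm x,s)\neq 0$ on $\partial\Omega\times[0,1]$.

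By homotopy invariance of the Brouwer degree (valid for bounded open $\Omega$ and continuous maps, as in \cite{ORegan2006}), we get $\deg(f,\Omega,0)=\deg(\mathrm{id},\Omega,0)$. Since $0\in\Omega$ and the identity has the single regular zero $0$ there, $\deg(\mathrm{id},\Omega,0)=1$. The existence property of the degree then says that a nonzero degree forces a zero of $f$ in $\Omega$. This contradicts our assumption, so some $\bm x_0\in\Omega$ satisfies $f(\bm x_0)=0$. (In fact the contradiction framing is unnecessary: the boundary condition alone makes the degree well defined and the argument goes through directly.)

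The only delicate point is checking that the standard degree axioms apply: normalization, homotopy invariance, and the solution property, for merely continuous $f$ on an arbitrary bounded open set. These are textbook facts, so I expect no real obstacle beyond verifying $0\notin\partial\Omega$, which is what handles the endpoint $s=0$.

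An alternative avoiding degree theory would use Brouwer's fixed point theorem on a ball when $\Omega$ is a ball. For general $\Omega$ that route needs a retraction argument, so the degree route is cleaner.
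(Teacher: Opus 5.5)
Your proof is correct and takes the same approach as the paper. The paper gives no proof of its own and simply invokes the statement as a consequence of Brouwer degree theory. Your argument is the standard derivation of that consequence: a straight-line homotopy to the identity, kept zero-free on $\partial\Omega$ by the sign condition and by $0\notin\partial\Omega$, followed by normalization, homotopy invariance, and the existence property of the degree.
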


To apply this theorem, let $\Pi_t^0$ denote the real vector space of spherical polynomials of degree at most $t$ with zero mean with respect to normalized surface measure $\sigma=\sigma_2$ on $\Sph$. Equip $\Pi_t^0$ with the inner product
\[
\langle P,Q\rangle_*
:=\int_{\Sph}P(\bm{x})Q(\bm{x})\,\dd\sigma(\bm{x}).
\]
For each $\bm{x}\in\Sph$, the Riesz representation theorem gives a unique polynomial $G_{\bm{x}}\in\Pi_t^0$ satisfying
\[
\langle G_{\bm{x}},Q\rangle_*=Q(\bm{x})
\qquad\forall Q\in\Pi_t^0.
\]
Thus, for distinct points $\bm{x}_1,\ldots,\bm{x}_N\in\Sph$, the spherical $t$-design conditions are equivalent to the single vector equation
\[
\sum_{i=1}^N G_{\bm{x}_i}=0.
\]
Indeed, this equation is equivalent to $\sum_{i=1}^N Q(\bm{x}_i)=0$ for every $Q\in\Pi_t^0$, while the quadrature identity for constant polynomials holds automatically.

Define
\[
\|P\|
:=\int_{\Sph}\|\grad P(\bm{x})\|_{\ell_2} \,\dd\sigma(\bm{x})
\quad\text{and}\quad
\Omega:=\{P\in\Pi_t^0:\|P\|<1\}.
\]
The quantity $\|\cdot\|$ is a norm on $\Pi_t^0$, because a polynomial with vanishing spherical gradient is constant, and its zero mean then forces it to vanish. Since $\Pi_t^0$ is finite-dimensional, $\Omega$ is a bounded open neighborhood of the origin. It therefore suffices to construct continuous maps
\begin{equation}\label{xueq80}
\bm{y}_i:\overline{\Omega}\to\Sph,
\qquad i=1,\ldots,N,
\end{equation}
such that the points $\bm{y}_1(P),\ldots,\bm{y}_N(P)$ are pairwise distinct for each $P\in\overline{\Omega}$ and
\begin{equation}\label{xueq90}
\sum_{i=1}^N P\bigl(\bm{y}_i(P)\bigr)>0
\qquad\forall P\in\partial\Omega.
\end{equation}
To see this, define
\[
f(P):=\sum_{i=1}^N G_{\bm{y}_i(P)}.
\]
The map $\bm{x}\mapsto G_{\bm{x}}$ is continuous, as is seen by expressing $G_{\bm{x}}$ in an orthonormal basis of $\Pi_t^0$. Hence $f$ is continuous, and
\[
\langle P,f(P)\rangle_*
=\sum_{i=1}^N P\bigl(\bm{y}_i(P)\bigr)>0
\qquad\forall P\in \partial\Omega.
\]
After identifying $\Pi_t^0$ with a Euclidean space through an orthonormal basis, Theorem~\ref{thm:brouwer-degree-criterion} gives $P_0\in\Omega$ with $f(P_0)=0$. The corresponding points $\bm{y}_1(P_0),\ldots,\bm{y}_N(P_0)$ then form a spherical $t$-design.

The main analytic task is therefore to establish \eqref{xueq90}. In the framework of Bondarenko, Radchenko, and Viazovska, one starts with an area-regular partition, chooses an arbitrary initial point in each cell, and moves all points along the same regularized gradient vector field associated with $P$. Sampling estimates bound the initial quadrature error and give a lower bound for the increase in the average value of $P$ along the flow. Positivity follows once this increase exceeds the initial quadrature error.

We follow this framework but introduce several refinements that yield a substantially smaller explicit value of \(N\). The main differences are summarized as follows.

\begin{enumerate}
\item[{\rm (i)}] \emph{Explicit geometry of the partition.} Bondarenko, Radchenko, and Viazovska  used any area-regular partition satisfying a dimension-dependent diameter bound. In this paper, we use the equal-area map of Ro\c{s}ca and Plonka to construct a partition of $\Sph$ into $6q^2$ cubed-sphere cells. Its explicit formulas allow us to bound the distances from the cell centers and the second distance moments (see Lemma~\ref{lem:karcher-localization}). These bounds also control higher distance moments and provide explicit geometric input for the sampling estimates.

\item[{\rm (ii)}] \emph{Cancellation through the choice of initial points.} Bondarenko, Radchenko, and Viazovska used an arbitrary initial point in each cell. Instead, we choose the Karcher center, namely the point minimizing the average squared geodesic distance to the cell. The first-variation identity at this center makes the average tangent displacement vanish. Consequently, the linear term in the cellwise Taylor expansion integrates to zero. The initial quadrature error is therefore controlled by second and higher distance moments, improving on the first-order displacement estimate used in  Bondarenko, Radchenko, and Viazovska .

\item[{\rm (iii)}] \emph{Sampling estimates based on cell moments.} Bondarenko, Radchenko, and Viazovska  obtained their sampling bounds from a general Marcinkiewicz-Zygmund inequality controlled by the maximum cell diameter. We derive the required estimates directly from rotational Bernstein inequalities and the distance moments of the cells. We then use explicit Hessian bounds and a differential inequality to control the sampling quantities as the points move. These estimates yield an explicit criterion ensuring that the gain along the flow exceeds the initial quadrature error.
\end{enumerate}

Thus, the topological mechanism is inherited from  Bondarenko, Radchenko, and Viazovska, while Karcher centering and the moment-based sampling analysis provide additional quantitative information beyond an evaluation of the constants in their proof.

\subsection{Organization of the paper}

Section~\ref{sec:preliminaries} collects the geometric and polynomial identities used in the proof.  Section~\ref{sec:map} constructs the cubed-sphere cells and states their Karcher localization estimates. Section~\ref{sec:sample} establishes the sampling bounds, and Section~\ref{sec:tag-criterion} combines them with the regularized flow to prove Theorem~\ref{thm:coarse-main}.  The appendices give the proofs of the rotational estimates and the cell localization bounds.

\section{Notation and analytic estimates}\label{sec:preliminaries}

This section introduces the notation and analytic estimates used to bound the initial quadrature error and to control the sampled polynomial values as the points move along the gradient flow. After fixing the geometric notation and polynomial spaces, we establish a geodesic Taylor formula for comparing cell averages with values at the cell centers, together with a Hessian bound for the variation of gradient length along curves. We then collect rotational derivative identities and Bernstein inequalities that allow these estimates to be applied repeatedly without increasing the polynomial degree. These tools form the analytic basis for the sampling estimates and the positivity criterion developed later.

\subsection{\texorpdfstring{The sphere $\Sph$}{The sphere S2}}

Throughout the paper,
\[
\Sph=\{\bm x\in\R^3:\|\bm x\|_{\ell_2}=1\}
\]
is equipped with the round metric. We write $\sigma=\sigma_2$ for the normalized surface measure, i.e.,
\[
\sigma(\Sph)=\frac1{4\pi}\int_{\Sph}\dd S=1,
\]
where $\dd S$ denotes ordinary surface area measure.  We write $\langle\cdot,\cdot\rangle$ for the Euclidean inner product.  For two points $\bm{x},\bm{y}\in\Sph$, we denote their geodesic distance by 
\begin{equation*}
\dist(\bm{x},\bm{y})=\arccos \langle \bm x,\bm y\rangle\in[0,\pi].	
\end{equation*}
The open geodesic ball with center $\bm x$ and radius $r$ is denoted by
\[
B(\bm x,r)=\{\bm y\in\Sph:\dist(\bm x,\bm y)<r\}.
\]
The tangent plane at $\bm x\in\Sph$ is
\[
T_{\bm x}\Sph
=\{\bm v\in\R^3:\langle\bm v,\bm x\rangle=0\}.
\]
For $\bm{c},\bm{y}\in\Sph$ with $r=\dist(\bm{c},\bm{y})<\pi$, let 
\begin{equation}
\xi_{\bm{c}}(\bm{y})
:=\log_{\bm c}(\bm y)=
\begin{cases}
\displaystyle
\frac{r}{\sin r}\cdot (\bm I_3-\bm c\bm c^T)\bm y
,&\bm{y}\ne \bm{c},\\[1ex]
\bm{0},&\bm{y}=\bm{c}.
\end{cases}
\label{eq:tangent-displacement}
\end{equation}
denote the Riemannian logarithm of $\bm y$ at $\bm c$ \cite{Afsari2011,Karcher1977}. Note that $\|\xi_{\bm{c}}(\bm{y})\|_{\ell_2}=r$, and $\xi_{\bm{c}}(\bm{y})\in T_{\bm{c}}\Sph$  if $\bm y\neq \bm c$.  We denote the shortest geodesic from $\bm c$ to $\bm y$ by
\begin{equation}
\gamma_{\bm{c},\bm{y}}(s)=
\begin{cases}
\displaystyle
\frac{\sin((1-s)r)}{\sin r}\,\bm{c}
+\frac{\sin(sr)}{\sin r}\,\bm{y},&\bm{y}\ne \bm{c},\\[1ex]
\bm{c},&\bm{y}=\bm{c},
\end{cases}
\qquad 0\le s\le1.
\label{eq:radial-geodesic}
\end{equation}
Note that $\gamma_{\bm{c},\bm{y}}(0)=\bm c$ and $\gamma_{\bm{c},\bm{y}}(1)=\bm y$. It has constant speed $r$ and $\frac{\dd}{\dd s} \gamma_{\bm c,\bm y}(s)|_{s=0} =\xi_{\bm c}(\bm y)$.

\subsection{The polynomial spaces}

The Euclidean gradient and Hessian matrix of a function $f(\bm{x})$ are
\[
\nabla_{\R^3}f
=\begin{pmatrix}
\partial_{x_1}f\\
\partial_{x_2}f\\
\partial_{x_3}f
\end{pmatrix},
\qquad
\nabla^2_{\R^3}f
:=\bigl(\partial_{x_i}\partial_{x_j}f\bigr)_{i,j=1}^3.
\]
For ${\bm x}\in \Sph$, the spherical gradient $\grad f(\bm x)$ is 
\[
\grad f(\bm x)
=(\bm I_3-\bm x\bm x^T)\nabla_{\R^3}f(\bm x)
\in T_{\bm x}\Sph.
\]
The spherical Hessian matrix $\nabla^2 f(\bm x)$, viewed as a self-adjoint operator on $T_{\bm x}\Sph$ and represented on $\R^3$ by extending it as zero on the normal line, is
\[
\nabla^2 f(\bm x)
:=(\bm I_3-\bm x\bm x^T)\nabla^2_{\R^3}f(\bm x)(\bm I_3-\bm x\bm x^T)
-\langle \nabla_{\R^3}f(\bm x),\bm x\rangle
(\bm I_3-\bm x\bm x^T).
\]
If $\Phi(\bm x)=(\Phi_1(\bm x),\ldots,\Phi_n(\bm x)): \R^m\to\R^n$ is differentiable, its Jacobian matrix at $\bm p\in \R^m$ is
\[
\bm J_{\Phi}(\bm p)
=\left(\frac{\partial\Phi_i}{\partial x_j}(\bm p)\right)_{
\substack{1\le i\le n\\1\le j\le m}}
\in\R^{n\times m}.
\]
For a matrix $\bm M$, its operator norm is
\[
\|\bm M\|_{\op}
:=\sup_{\bm v\ne\bm0}
\frac{\|\bm M\bm v\|_{\ell_2}}{\|\bm v\|_{\ell_2}}.
\]

Recall that, for an integer $t\geq 1$, $\Pi_t$ denotes the space of restrictions to $\Sph$ of real polynomials on $\R^3$ of total degree at most $t$, and
\[
\Pi_t^0=\left\{P\in\Pi_t:
\int_{\Sph}P(\bm{x})\,\dd\sigma(\bm{x})=0\right\}.
\]
We also recall the notation
\[
\|P\|=\int_{\Sph}\|\grad P(\bm{x})\|_{\ell_2}\,\dd\sigma(\bm{x}),
\qquad P\in\Pi_t,
\]
which defines a seminorm on $\Pi_t$ and a norm on $\Pi_t^0$.

The following lemma is the spherical analogue of the usual Taylor formula with a second-order integral remainder, with the line segment replaced by the shortest geodesic joining the two points. It is the $k=1$, $t=1$ geodesic specialization of the Riemannian Taylor formula in Le Brigant and Puechmorel~\cite[Lemma~1, p.~20]{LeBrigantPuechmorel2019}.
\begin{lemma}
\label{lem:geodesic-taylor-second-order}
Let $Q\in\Pi_t$, and let $\bm c,\bm y\in\Sph$ satisfy $\dist(\bm c,\bm y)<\pi$.  Then
\[
\begin{aligned}
Q(\bm y)-Q(\bm c)
&=\langle\grad Q(\bm c),\xi_{\bm c}(\bm y)\rangle
+\int_0^1(1-\tau)
\left\langle
\nabla^2 Q(\gamma(\tau))
\gamma'(\tau),
\gamma'(\tau)
\right\rangle\dd\tau,
\end{aligned}
\]
where $\gamma(\tau):=\gamma_{\bm c,\bm y}(\tau)$ and $\gamma'(\tau)=\frac{\dd}{\dd\tau}\gamma(\tau)$.
\end{lemma}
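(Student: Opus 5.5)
The plan is to reduce the statement to the one-variable Taylor formula with integral remainder, applied to $g(\tau):=Q(\gamma(\tau))$ on $[0,1]$. If $\bm y=\bm c$, both sides vanish: $\xi_{\bm c}(\bm c)=\bm 0$ and $\gamma$ is constant, so $\gamma'\equiv\bm0$. Assume therefore $0<r=\dist(\bm c,\bm y)<\pi$. Then $\gamma=\gamma_{\bm c,\bm y}$ in \eqref{eq:radial-geodesic} is a smooth (indeed real-analytic) curve on $\Sph$. Since $Q$ is the restriction of a polynomial on $\R^3$, $g$ is smooth, and
\[
g(1)-g(0)=g'(0)+\int_0^1(1-\tau)g''(\tau)\dd\tau .
\]
It then suffices to identify $g'(0)$ and $g''(\tau)$.

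For the first derivative, the chain rule gives $g'(\tau)=\langle\nabla_{\R^3}Q(\gamma(\tau)),\gamma'(\tau)\rangle$. Because $\gamma'(\tau)\in T_{\gamma(\tau)}\Sph$ (differentiate $\|\gamma\|^2=1$), we may insert the projection $\bm I_3-\gamma\gamma^T$ and obtain $g'(\tau)=\langle\grad Q(\gamma(\tau)),\gamma'(\tau)\rangle$. At $\tau=0$ we have $\gamma'(0)=\xi_{\bm c}(\bm y)$, as noted after \eqref{eq:radial-geodesic}; this can be checked directly: $\gamma'(0)=\frac{r}{\sin r}(\bm y-\cos r\,\bm c)=\frac{r}{\sin r}(\bm I_3-\bm c\bm c^T)\bm y$. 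This produces the linear term.

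For the second derivative, differentiate again:
\[
g''(\tau)=\langle\nabla^2_{\R^3}Q(\gamma)\gamma',\gamma'\rangle+\langle\nabla_{\R^3}Q(\gamma),\gamma''\rangle.
\]
The key geodesic fact is $\gamma''(\tau)=-r^2\gamma(\tau)$, which follows by differentiating the sine combination twice. So the second term equals $-r^2\langle\nabla_{\R^3}Q(\gamma),\gamma\rangle=-\|\gamma'\|^2\langle\nabla_{\R^3}Q(\gamma),\gamma\rangle$, using constant speed $\|\gamma'\|=r$. Since $\gamma'$ is tangent, $(\bm I_3-\gamma\gamma^T)\gamma'=\gamma'$. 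Hence the first term equals $\langle(\bm I_3-\gamma\gamma^T)\nabla^2_{\R^3}Q(\gamma)(\bm I_3-\gamma\gamma^T)\gamma',\gamma'\rangle$, and the second equals $-\langle\nabla_{\R^3}Q(\gamma),\gamma\rangle\langle(\bm I_3-\gamma\gamma^T)\gamma',\gamma'\rangle$. Their sum is exactly $\langle\nabla^2Q(\gamma)\gamma',\gamma'\rangle$ with the spherical Hessian defined in the paper. Substituting both identifications into the Taylor formula gives the lemma.

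The only step needing care is the second-derivative identification: one must use that $\gamma$ is a geodesic, i.e.\ that its acceleration is purely normal, $\gamma''=-r^2\gamma$. This is what makes the normal-derivative correction term in the definition of $\nabla^2$ appear with exactly the right coefficient. Everything else is the chain rule.
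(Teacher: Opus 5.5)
Your proposal is correct and follows essentially the same route as the paper: you apply the one-variable Taylor formula to $g(\tau)=Q(\gamma(\tau))$, identify $g'(0)$ through the tangential projection and $\gamma'(0)=\xi_{\bm c}(\bm y)$, and match $g''(\tau)$ with the spherical Hessian using $\gamma''=-\|\gamma'\|_{\ell_2}^2\gamma$. Your explicit checks of $\gamma'(0)$, $\gamma''=-r^2\gamma$, and the constant speed $r$ are a slightly more detailed version of the facts the paper uses directly.
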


\begin{proof}
If $\bm y=\bm c$, then $\xi_{\bm c}(\bm y)=\bm0$ and the geodesic is constant, so the identity is immediate.  Assume $\bm y\ne\bm c$. Denote  $g(\tau)=Q(\gamma(\tau))$.  The one-dimensional Taylor formula with integral remainder gives
\[
g(1)-g(0)=g'(0)+\int_0^1(1-\tau)g''(\tau)\dd\tau.
\]
By the chain rule and the definition of the spherical gradient,
\[
g'(0)
=\left\langle\grad Q(\gamma(0)),\gamma'(0)\right\rangle
=\langle\grad Q(\bm c),\xi_{\bm c}(\bm y)\rangle.
\]
It remains only to identify $g''(\tau)$.  Since $\gamma$ is a constant speed great-circle geodesic on $\Sph$, we have
\[
\left\langle\gamma(\tau),\gamma'(\tau)\right\rangle=0
\quad\text{and}\quad
\frac{\dd^2\gamma}{\dd\tau^2}(\tau)
=-\left\|\gamma'(\tau)\right\|_{\ell_2}^2\gamma(\tau).
\]
Differentiating $g(\tau)=Q(\gamma(\tau))$ in $\R^3$ gives
\[
\begin{aligned}
g''(\tau)
&=
\left\langle
\nabla^2_{\R^3}Q(\gamma(\tau))
\gamma'(\tau),
\gamma'(\tau)
\right\rangle +
\left\langle
\nabla_{\R^3}Q(\gamma(\tau)),
\frac{\dd^2\gamma}{\dd\tau^2}(\tau)
\right\rangle\\
&=
\left\langle
\nabla^2_{\R^3}Q(\gamma(\tau))
\gamma'(\tau),
\gamma'(\tau)
\right\rangle
-\left\langle\nabla_{\R^3}Q(\gamma(\tau)),\gamma(\tau)\right\rangle
\left\|\gamma'(\tau)\right\|_{\ell_2}^2.
\end{aligned}
\]
Because $\gamma'(\tau)\in T_{\gamma(\tau)}\Sph$, the definition of the spherical Hessian in $\R^3$ gives
\[
g''(\tau)
=
\left\langle
\nabla^2 Q(\gamma(\tau))
\gamma'(\tau),
\gamma'(\tau)
\right\rangle.
\]
Substituting the formulas for $g'(0)$ and $g''(\tau)$ into the preceding one-dimensional Taylor formula proves the claim.
\end{proof}

The following lemma bounds the variation in the norm of the spherical gradient along a curve on the sphere.

\begin{lemma}
\label{lem:spherical-gradient-variation-curve}
Let $H$ be a twice continuously differentiable function on $\Sph$, and let $\gamma:[a,b]\to\Sph$ be a smooth curve. Then
\[
\bigg|
\|\grad H(\gamma(b))\|_{\ell_2}
-
\|\grad H(\gamma(a))\|_{\ell_2}
\bigg|
\le
\int_a^b
\bigg\|\nabla^2H(\gamma(r))\bigg\|_{\op}
\bigg\| \frac{\dd}{\dd r}\gamma(r)\bigg\|_{\ell_2}\,\dd r.
\]

\end{lemma}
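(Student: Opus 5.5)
The plan is to reduce the statement to a one-dimensional estimate for the scalar function $h(r):=\|\grad H(\gamma(r))\|_{\ell_2}$ and to bound its derivative wherever it exists. The vector field $\bm v(r):=\grad H(\gamma(r))=(\bm I_3-\gamma\gamma^T)\nabla_{\R^3}H(\gamma)$ is $C^1$ in $r$, since $H$ is $C^2$ and $\gamma$ is smooth. (We may regard $H$ as extended smoothly to a neighborhood of $\Sph$, e.g.\ $0$-homogeneously; the spherical gradient and Hessian do not depend on the choice of extension.) Because $h=\|\bm v\|_{\ell_2}$ is the composition of a $C^1$ map with a $1$-Lipschitz function, $h$ is Lipschitz on $[a,b]$, hence absolutely continuous. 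Therefore
\[
\bigl|h(b)-h(a)\bigr|\le\int_a^b|h'(r)|\,\dd r,
\]
and it suffices to prove, for almost every $r$, that $|h'(r)|\le\|\nabla^2H(\gamma(r))\|_\op\|\gamma'(r)\|_{\ell_2}$.

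At points where $\bm v(r)\neq\bm0$, we have $h'=\langle\bm v,\bm v'\rangle/\|\bm v\|$. At points where $\bm v(r)=\bm0$, either $h'$ fails to exist, or $h$ has a minimum there and $h'=0$. In every case $|h'|\le\|\bm v'\|$ wherever $h'$ exists. This estimate is too weak, however, because $\bm v'$ contains a normal component that the spherical Hessian does not control. The better route is to use that $\bm v\perp\gamma$, so that only the tangential projection of $\bm v'$ enters: $\langle\bm v,\bm v'\rangle=\langle\bm v,(\bm I_3-\gamma\gamma^T)\bm v'\rangle$. This gives
\[
|h'|\le\bigl\|(\bm I_3-\gamma\gamma^T)\bm v'\bigr\|_{\ell_2}.
\]

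The main step is to compute this tangential component. Differentiating $\bm v=(\bm I_3-\gamma\gamma^T)\nabla_{\R^3}H(\gamma)$ gives
\[
\bm v'=-(\gamma'\gamma^T+\gamma\gamma'^T)\nabla_{\R^3}H+(\bm I_3-\gamma\gamma^T)\nabla^2_{\R^3}H\,\gamma'.
\]
Projecting onto $T_\gamma\Sph$ with $\bm P=\bm I_3-\gamma\gamma^T$, and using $\bm P\gamma=\bm0$ and $\bm P\gamma'=\gamma'$ (since $\gamma'\perp\gamma$ because $\|\gamma\|\equiv1$), we obtain
\[
\bm P\bm v'=\bm P\nabla^2_{\R^3}H\,\bm P\gamma'-\langle\nabla_{\R^3}H,\gamma\rangle\gamma'=\nabla^2H(\gamma)\gamma'.
\]
The last equality is exactly the definition of the spherical Hessian given above. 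Hence
\[
|h'(r)|\le\|\nabla^2H(\gamma(r))\gamma'(r)\|_{\ell_2}\le\|\nabla^2H(\gamma(r))\|_\op\|\gamma'(r)\|_{\ell_2},
\]
and integrating over $[a,b]$ yields the claim.

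The only delicate point is the nondifferentiability of the norm at zeros of $\grad H$, and absolute continuity handles it. An alternative that avoids it is to work with $h_\varepsilon:=\sqrt{\|\bm v\|^2+\varepsilon^2}$, which is $C^1$ and satisfies $|h_\varepsilon'|=|\langle\bm v,\bm v'\rangle|/h_\varepsilon\le\|\bm P\bm v'\|$; one then lets $\varepsilon\to0$.
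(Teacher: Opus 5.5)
Your proof is correct and follows essentially the same route as the paper: the key step is the tangential-projection identity $(\bm I_3-\gamma\gamma^T)\bm v'=\nabla^2H(\gamma)\gamma'$, which you verify by direct differentiation while the paper cites Boumal for it. The nondifferentiability of the norm at zeros of $\grad H$ is handled either by your absolute-continuity argument or by the regularization $\sqrt{\|\bm v\|_{\ell_2}^2+\varepsilon^2}$ that you mention as an alternative, which is exactly the device the paper uses.
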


\begin{proof}
Regard the spherical gradient along $\gamma$ as a $C^1$ curve in $\R^3$, and write
\[
\bm g(r)=\grad H(\gamma(r))
\quad\text{and}\quad
\bm v(r)=\frac{\dd\gamma}{\dd r}(r).
\]
Both $\bm g(r)$ and $\bm v(r)$ are tangent to $\Sph$ at $\gamma(r)$.  By \cite[Example~5.32, equation~(5.19)]{Boumal2023}, we have
\[
\left(\bm I_3-\gamma(r)\gamma(r)^T\right)
\frac{d\bm g}{dr}(r)
=
\nabla^2 H(\gamma(r))\bm v(r).
\]
Since $\bm g(r)\in T_{\gamma(r)}\mathbb S^2$, taking the inner product with $\bm g(r)$ gives
\begin{equation}\label{xueq60}
\left\langle\bm g(r),\frac{\dd\bm g}{\dd r}(r)\right\rangle
=\left\langle\bm g(r),\nabla^2H(\gamma(r))\bm v(r)\right\rangle.
\end{equation}
Fix $\varepsilon>0$ and set
\[
f_\varepsilon(r)
=\sqrt{\|\bm g(r)\|_{\ell_2}^2+\varepsilon^2}.
\]
This is a $C^1$ function on $[a,b]$.  The Euclidean chain rule and the preceding identity \eqref{xueq60} give
\[
\frac{\dd f_\varepsilon}{\dd r}(r)
=\frac{\left\langle\bm g(r),\frac{\dd\bm g}{\dd r}(r)\right\rangle}
{\sqrt{\|\bm g(r)\|_{\ell_2}^2+\varepsilon^2}}
=\frac{\left\langle\bm g(r),
\nabla^2H(\gamma(r))\bm v(r)\right\rangle}
{\sqrt{\|\bm g(r)\|_{\ell_2}^2+\varepsilon^2}}.
\]
By the Cauchy-Schwarz inequality and the definition of the operator norm,
\[
\begin{aligned}
\left|\frac{\dd f_\varepsilon}{\dd r}(r)\right|
&\le
\frac{\|\bm g(r)\|_{\ell_2}}
{\sqrt{\|\bm g(r)\|_{\ell_2}^2+\varepsilon^2}}
\|\nabla^2H(\gamma(r))\|_{\op}\,\|\bm v(r)\|_{\ell_2}\le\|\nabla^2H(\gamma(r))\|_{\op}\,\|\bm v(r)\|_{\ell_2}.
\end{aligned}
\]
The fundamental theorem of calculus and the triangle inequality now imply
\[
\begin{aligned}
|f_\varepsilon(b)-f_\varepsilon(a)|
&\le\int_a^b
\left|\frac{\dd f_\varepsilon}{\dd r}(r)\right|\dd r\le\int_a^b
\|\nabla^2H(\gamma(r))\|_{\op}
\left\|\frac{\dd\gamma}{\dd r}(r)\right\|_{\ell_2}\dd r.
\end{aligned}
\]
Since the right-hand side is independent of $\varepsilon$, letting $\varepsilon\downarrow0$ gives the asserted inequality, including the cases in which the gradient vanishes at an endpoint or along the curve.
\end{proof}

\subsection{Rotational derivative}

For $\bm a=(a_1,a_2,a_3)^T$ and $\bm b=(b_1,b_2,b_3)^T$ in $\R^3$, their cross product is
\begin{equation}
\bm a\times\bm b
=(a_2b_3-a_3b_2,a_3b_1-a_1b_3,a_1b_2-a_2b_1)^T.
\label{eq:cross-product-coordinate}
\end{equation}
For $\bm u\in\Sph$, let $\mathcal R_{\bm u}(s):\R^3\to\R^3$ be the right-handed rotation operator through angle $s$ about the axis $\bm u$.  Rodrigues' formula is
\[
{
\mathcal R_{\bm u}(s)\bm x
=(\cos s)\,\bm x+(\sin s)\,(\bm u\times\bm x)
+(1-\cos s)\langle\bm u,\bm x\rangle\bm u.}
\]
For a polynomial $P\in\Pi_t$  and for an axis vector $\bm u\in \Sph$, define the rotational derivative
\begin{equation}
\Rot_{\bm u}P(\bm x)
:=\left.\frac{\dd}{\dd s}
P\bigl(\mathcal R_{\bm u}(s)\bm x\bigr)\right|_{s=0},
\label{eq:rotational-derivative}
\end{equation}
which measures how fast the value of $P$ changes if we rotate $\bm x$ around the axis $\bm u$. Write $\nabla_{\R^3} P(\bm x)=(g_1,g_2,g_3)^T$, $\bm x=(x_1,x_2,x_3)$ and $\bm u=(u_1,u_2,u_3)$. Using the chain rule, we can rewrite $\Rot_{\bm u}P(\bm x)$ as
\begin{equation}
\begin{aligned}
\Rot_{\bm u}P(\bm x)
&=\langle\nabla_{\R^3} P(\bm x),\bm u\times\bm x\rangle\\ 
&=(u_2x_3-u_3x_2)g_1
+(u_3x_1-u_1x_3)g_2
+(u_1x_2-u_2x_1)g_3.
\label{eq:rotational-derivative-coordinate}
\end{aligned}
\end{equation}
Note that $g_1,g_2,g_3\in\Pi_{t-1}$, so $\Rot_{\bm u}P$ is also a polynomial in $\Pi_t$ for any fixed $\bm u\in\Sph$.

Rotational derivatives have been used extensively in spherical approximation theory.  Dai and Xu \cite{DaiXu2010,DaiXu2013} used these operators to establish direct and inverse theorems for polynomial approximation. In this paper, we use rotational derivatives to express the spherical gradient through scalar rotation averages and to control the Hessian by gradients of rotational derivatives. Since rotational differentiation preserves polynomial degree, Bernstein estimates can be applied repeatedly. The following lemma collects the identities and estimates needed for this argument.

\begin{lemma}
\label{lem:rotational-estimates}
Let $t\ge1$.  The following assertions hold.
\begin{enumerate}
\renewcommand{\labelenumi}{ \textup{(\roman{enumi})}}
\item For every $\bm u\in\Sph$ and $P\in\Pi_t$, we have $\Rot_{\bm u}P\in\Pi_t^0$. 

\item For every $P\in\Pi_t$ and $\bm x\in\Sph$,
\begin{equation}
\|\grad P(\bm x)\|_{\ell_2}
=2\int_{\Sph}|\Rot_{\bm u}P(\bm x)|\dd\sigma(\bm u).
\label{eq:axis-average-main}
\end{equation}

\item For every $Q\in\Pi_t$, we have
\begin{equation}
\int_{\Sph}\|\grad Q(\bm x)\|_{\ell_2}\dd\sigma(\bm x)
\le\frac{\pi t}{2}\int_{\Sph}|Q(\bm x)|\dd\sigma(\bm x).
\label{eq:spherical-L1-Bernstein}
\end{equation}

\item For every $P\in\Pi_t$,
\begin{equation}
\int_{\Sph}\|\Rot_{\bm u}P\|\dd\sigma(\bm u)
\le\frac{\pi t}{4}\|P\|.
\label{eq:rot-average-main}
\end{equation}

\item For every $P\in\Pi_t$ and $\bm x\in\Sph$,
\begin{equation}
\|\nabla^2 P(\bm x)\|_{\op}
\le2\int_{\Sph}
\|\grad\Rot_{\bm u}P(\bm x)\|_{\ell_2}\dd\sigma(\bm u).
\label{eq:hessian-axis-average}
\end{equation}
\end{enumerate}
\end{lemma}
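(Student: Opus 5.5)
The plan is to prove the five assertions in order, each by a short symmetry or averaging argument. The analytic input is the classical one-dimensional $L^1$ Bernstein inequality for trigonometric polynomials. Throughout I will use the identity $\Rot_{\bm u}P(\bm x)=\langle \grad P(\bm x),\bm u\times\bm x\rangle=\langle \bm u,\bm x\times\grad P(\bm x)\rangle$. It holds because $\bm u\times\bm x\in T_{\bm x}\Sph$, so the normal part of $\nabla_{\R^3}P$ drops out.

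For (i), degree preservation was already observed after \eqref{eq:rotational-derivative-coordinate}. For the mean, I will differentiate $s\mapsto\int_{\Sph}P(\mathcal R_{\bm u}(s)\bm x)\dd\sigma(\bm x)$ under the integral sign. This function is constant by rotation invariance of $\sigma$, so its derivative $\int\Rot_{\bm u}P\,\dd\sigma$ vanishes. For (ii), set $\bm w=\bm x\times\grad P(\bm x)$; then $\|\bm w\|_{\ell_2}=\|\grad P(\bm x)\|_{\ell_2}$. The elementary fact $\int_{\Sph}|\langle\bm u,\bm w\rangle|\dd\sigma(\bm u)=\|\bm w\|_{\ell_2}/2$ (Archimedes: the height coordinate is uniform on $[-1,1]$) then gives \eqref{eq:axis-average-main}.

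For (iii), averaging over all $\bm u$ would only give the constant $2t$. To reach $\pi t/2$, I will instead average over axes $\bm u\perp\bm x$, i.e.\ over the great circle $C_{\bm x}$ orthogonal to $\bm x$. Both $\bm u$ and $\bm w$ lie in $T_{\bm x}\Sph$, so the average of $|\cos|$ gives $\|\grad Q(\bm x)\|_{\ell_2}=\frac{\pi}{2}\,\mathbb E_{\bm u\in C_{\bm x}}|\Rot_{\bm u}Q(\bm x)|$. Integrating over $\bm x$ yields the uniform measure on orthonormal pairs $(\bm x,\bm u)$, which is symmetric. Hence the double average equals $\mathbb E_{\bm u}\mathbb E_{\bm x\in C_{\bm u}}|\Rot_{\bm u}Q(\bm x)|$. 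On the great circle $C_{\bm u}$, $\Rot_{\bm u}Q$ is exactly the arclength derivative of the restriction of $Q$, which is a trigonometric polynomial of degree $\le t$. The $L^1$ Bernstein inequality on the circle bounds this inner average by $t\,\mathbb E_{\bm x\in C_{\bm u}}|Q(\bm x)|$. Averaging over $\bm u$ then recovers $t\int|Q|\dd\sigma$, which proves \eqref{eq:spherical-L1-Bernstein}. For (iv), I will apply (iii) to $Q=\Rot_{\bm u}P\in\Pi_t$ for each $\bm u$, integrate in $\bm u$, swap integrals, and use (ii). This gives $\int\|\Rot_{\bm u}P\|\dd\sigma(\bm u)\le\frac{\pi t}{2}\int_{\bm x}\int_{\bm u}|\Rot_{\bm u}P(\bm x)|=\frac{\pi t}{2}\cdot\frac12\|P\|$.

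For (v), the key step, and the one I expect to be the main obstacle, is a formula for $\grad\Rot_{\bm u}P(\bm x)$. The field $\bm x\mapsto\bm u\times\bm x$ is a Killing field, and its covariant derivative at $\bm x$ in direction $\bm v$ is $(\bm I_3-\bm x\bm x^T)(\bm u\times\bm v)=\langle\bm u,\bm x\rangle\,\bm x\times\bm v$. Differentiating $\langle\grad P,\bm u\times\bm x\rangle$ therefore gives
\[
\grad\Rot_{\bm u}P(\bm x)=\nabla^2P(\bm x)(\bm u\times\bm x)+\langle\bm u,\bm x\rangle\,\bm J\grad P(\bm x),
\]
with $\bm J$ a fixed rotation by a right angle in $T_{\bm x}\Sph$ (I will verify this directly from the Euclidean formulas and the definition of $\nabla^2$). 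Now let $\bm u'=\bm u-2\langle\bm u,\bm x\rangle\bm x$ be the reflection of $\bm u$; it is measure preserving, satisfies $\bm u'\times\bm x=\bm u\times\bm x$, and flips the sign of $\langle\bm u,\bm x\rangle$. Averaging the gradients at $\bm u$ and $\bm u'$ kills the second term, so $\int\|\nabla^2P(\bm x)(\bm u\times\bm x)\|\dd\sigma(\bm u)\le\int\|\grad\Rot_{\bm u}P(\bm x)\|\dd\sigma(\bm u)$. Finally, let $\bm e$ be a unit eigenvector of $\nabla^2P(\bm x)$ with eigenvalue $\lambda$, where $|\lambda|=\|\nabla^2P(\bm x)\|_{\op}$. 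Then $\|\nabla^2P(\bm x)\bm a\|\ge|\lambda||\langle\bm a,\bm e\rangle|$, and $\int|\langle\bm u\times\bm x,\bm e\rangle|\dd\sigma(\bm u)=\int|\langle\bm u,\bm x\times\bm e\rangle|\dd\sigma(\bm u)=\tfrac12$. Together these give \eqref{eq:hessian-axis-average}.
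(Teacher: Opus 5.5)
Your proposal is correct. Parts (i), (ii) and (iv) coincide with the paper's proof.

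Part (iii) is the paper's argument in different coordinates. The paper averages Zygmund's $L^1$ Bernstein inequality over the great circles $\theta\mapsto\bm G\bm a_\theta$, $\bm G\in\mathrm{SO}(3)$. It uses that $(\bm G\bm a_\theta,\bm G\bm b_\theta)$ is uniform on the unit tangent bundle and that $\int_{S^1_{\bm x}}|\langle\bm w,\bm v\rangle|\dd\nu_{\bm x}(\bm v)=\frac2\pi\|\bm w\|_{\ell_2}$. You index the same great circles by their axis $\bm u\perp\bm x$ and exchange $\bm x$ and $\bm u$ using the symmetry of the orthonormal-pair measure. That symmetry holds because right multiplication by a rotation swapping $\bm e_1,\bm e_2$ preserves Haar measure. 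The constant $\pi/2$ arises in the same way.

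The genuine difference is in (v).
\begin{itemize}
\item The paper rotates to $\bm x=\bm e_3$ and projects $\grad\Rot_{\bm u}P(\bm e_3)$ onto a top eigenvector $\bm v=(a,b,0)$. This gives $\langle\bm u,\bm z\rangle$ with $\bm z=(-\lambda b,\lambda a,ap_2-bp_1)$. The first-order term lands in the normal component, orthogonal to the Hessian part, so it only enlarges $\|\bm z\|_{\ell_2}\ge|\lambda|$ and no symmetrization is needed.
\item You instead establish the coordinate-free identity $\grad\Rot_{\bm u}P(\bm x)=\nabla^2P(\bm x)(\bm u\times\bm x)-\langle\bm u,\bm x\rangle\,\bm x\times\grad P(\bm x)$. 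This is your formula with $\bm J\bm w=-\bm x\times\bm w$, and it checks out from the Euclidean formulas. You then remove the second term with the reflection $\bm u\mapsto\bm u-2\langle\bm u,\bm x\rangle\bm x$ before projecting.
\end{itemize}

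Your route needs no reduction to $\bm e_3$. It also yields a slightly stronger vector-level bound, $\int_{\Sph}\|\nabla^2P(\bm x)(\bm u\times\bm x)\|_{\ell_2}\dd\sigma(\bm u)\le\int_{\Sph}\|\grad\Rot_{\bm u}P(\bm x)\|_{\ell_2}\dd\sigma(\bm u)$. The paper's route replaces your reflection by Pythagoras. In your notation, it amounts to $\langle\grad\Rot_{\bm u}P(\bm x),\bm e\rangle=\langle\bm u,\lambda\,\bm x\times\bm e+c\,\bm x\rangle$ with the scalar $c=-\langle\bm x\times\grad P(\bm x),\bm e\rangle$. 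Since $\bm x\times\bm e\perp\bm x$, the vector $\lambda\,\bm x\times\bm e+c\,\bm x$ has norm at least $|\lambda|$.
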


\begin{proof}
See Appendix~\ref{pf-lem:rotational-estimates}.
\end{proof}

The following lemma shows that, for fixed polynomial degree, the supremum norm of the spherical gradient and the norms of all rotational derivatives are uniformly controlled by $\|Q\|$.

\begin{lemma}
\label{lem:finite-dimensional-remainder-bounds}
For every $t\ge1$, there is a constant $L_t\ge1$ such that for every $Q\in\Pi_t$,
\[
\sup_{\bm x\in\Sph}\|\grad Q(\bm x)\|_{\ell_2}
\le L_t\|Q\|,
\qquad
\sup_{\bm u\in\Sph}\|\Rot_{\bm u}Q\|\le L_t\|Q\|.
\]
\end{lemma}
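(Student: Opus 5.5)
The argument is a finite-dimensional norm-equivalence argument on $\Pi_t$. The only subtlety is that $\|\cdot\|$ is merely a seminorm on $\Pi_t$, with kernel the constants. The plan is to reduce to $\Pi_t^0$, where $\|\cdot\|$ is a norm.

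\emph{Reduction to mean zero.} Write $Q=c+Q_0$ with $c=\int_{\Sph}Q\dd\sigma$ and $Q_0\in\Pi_t^0$. Both quantities on the left are unchanged when $c$ is dropped: $\grad Q=\grad Q_0$ because the spherical gradient of a constant is zero, and $\Rot_{\bm u}Q=\Rot_{\bm u}Q_0$ by \eqref{eq:rotational-derivative-coordinate}. Likewise $\|Q\|=\|Q_0\|$. It therefore suffices to prove both inequalities for $Q\in\Pi_t^0$.

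\emph{First inequality.} On the finite-dimensional space $\Pi_t^0$, the map $Q\mapsto\sup_{\bm x\in\Sph}\|\grad Q(\bm x)\|_{\ell_2}$ is a seminorm, and it is finite because $\grad Q$ is continuous on the compact sphere. Every seminorm on a finite-dimensional space is bounded by a constant times any given norm, since it is continuous and hence bounded on the compact unit sphere of that norm. As noted in the introduction, $\|\cdot\|$ is a norm on $\Pi_t^0$. This gives a constant $A_t$ with $\sup_{\bm x}\|\grad Q(\bm x)\|_{\ell_2}\le A_t\|Q\|$.

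\emph{Second inequality.} The map $Q\mapsto\sup_{\bm u\in\Sph}\|\Rot_{\bm u}Q\|$ needs the extra check that it is a finite seminorm. Subadditivity and homogeneity are clear, since $\Rot_{\bm u}$ is linear and the supremum of seminorms is a seminorm. For finiteness, fix a basis $Q_1,\dots,Q_m$ of $\Pi_t^0$ and write $Q=\sum_j a_jQ_j$. By \eqref{eq:rotational-derivative-coordinate}, $\Rot_{\bm u}$ is linear in $\bm u$, so $\Rot_{\bm u}Q=\sum_{k=1}^3u_k\Rot_{\bm e_k}Q$. Hence
\[
\|\Rot_{\bm u}Q\|\le\sum_{k=1}^3|u_k|\,\|\Rot_{\bm e_k}Q\|\le\sum_{k=1}^3\|\Rot_{\bm e_k}Q\|.
\]
The right-hand side is a seminorm on $\Pi_t^0$; each term is finite because $\Rot_{\bm e_k}Q\in\Pi_t$ by Lemma~\ref{lem:rotational-estimates}(i). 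The same finite-dimensional comparison then yields $B_t$ with $\sup_{\bm u}\|\Rot_{\bm u}Q\|\le B_t\|Q\|$.

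Finally, set $L_t=\max\{1,A_t,B_t\}$. There is no real obstacle here. The only point requiring care is the reduction to mean zero, which avoids the degeneracy of $\|\cdot\|$ on constants. Explicit values of $L_t$ are not needed, since the lemma is presumably used only qualitatively, for example to control remainder terms or continuity.
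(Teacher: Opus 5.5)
Your proof is correct and follows essentially the same route as the paper: subtract the mean to land in $\Pi_t^0$, where $\|\cdot\|$ is a norm, and then bound each finite seminorm by a multiple of $\|\cdot\|$ using finite dimensionality. The differences are minor. The paper also checks that both suprema are norms on $\Pi_t^0$, which the one-sided bound does not need. You instead justify finiteness of $\sup_{\bm u}\|\Rot_{\bm u}Q\|$ explicitly through the linearity $\Rot_{\bm u}Q=\sum_k u_k\Rot_{\bm e_k}Q$, which the paper asserts without argument.
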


\begin{proof}
If $\|Q\|=0$, its continuous spherical gradient vanishes, so $Q$ is constant on the connected sphere.  Thus $\|\cdot\|$ is a norm on $\Pi_t^0$.  The two maps
\[
Q\longmapsto \sup_{\bm x\in\Sph}\|\grad Q(\bm x)\|_{\ell_2},
\qquad
Q\longmapsto \sup_{\bm u\in\Sph}\|\Rot_{\bm u}Q\|
\]
are finite seminorms and hence continuous on this finite-dimensional space.  The first vanishes only on constants.  If the second vanishes, each $\Rot_{\bm u}Q$ is constant and has mean zero by Lemma~\ref{lem:rotational-estimates}\textup{(i)}, so it is zero. Equation~\eqref{eq:axis-average-main} then gives $\grad Q=\bm0$. Thus both are norms on $\Pi_t^0$.  By equivalence of norms \cite{Robinson2020}, there is a constant $L_t\ge1$ such that for every $Q\in\Pi_t^0$,
\[
\sup_{\bm x\in\Sph}\|\grad Q(\bm x)\|_{\ell_2}
\le L_t\|Q\|,
\qquad
\sup_{\bm u\in\Sph}\|\Rot_{\bm u}Q\|\le L_t\|Q\|.
\]
For an arbitrary $Q\in\Pi_t$, apply these inequalities to $Q-\int_{\Sph}Q\,\dd\sigma\in\Pi_t^0$. Subtracting a constant changes neither the spherical gradient nor any rotational derivative, and also leaves $\|Q\|$ unchanged. Hence both inequalities hold for every $Q\in\Pi_t$.

\end{proof}

\section{Area-regular partitions via the Ro\c{s}ca-Plonka map}
\label{sec:map}

This section supplies the geometric input for the sampling estimates. We construct $N=6q^2$ equal-area cells using the Ro\c{s}ca-Plonka map and choose their Karcher centers.  The first-variation identity provides the cancellation needed for quadrature, while Lemma~\ref{lem:karcher-localization} bounds the cell radius about each center and its second distance moment.

\subsection{Area-regular partitions}
The formulas in this subsection give the normalized-coordinate form of the explicit equal-area cubical projection of Ro\c{s}ca and Plonka~\cite[Sections~3-5]{RoscaPlonka2011}.  On the parameter square $[-1,1]^2$, if $|r|\le |s|$ and $s\ne0$, define
\begin{align*}
T_1(s,r)
&=2^{1/4}s
\frac{\sqrt2\cos\frac{\pi r}{12s}-1}{\sqrt{\sqrt2-\cos\frac{\pi r}{12s}}},
\quad T_2(s,r)
=2^{1/4}s
\frac{\sqrt2\sin\frac{\pi r}{12s}}{\sqrt{\sqrt2-\cos\frac{\pi r}{12s}}}.
\end{align*}
If $|s|\le|r|$ and $r\ne0$, use the symmetric formula
\begin{align*}
T_1(s,r)
&=2^{1/4}r
\frac{\sqrt2\sin\frac{\pi s}{12r}}
{\sqrt{\sqrt2-\cos\frac{\pi s}{12r}}},
\quad T_2(s,r)
=2^{1/4}r
\frac{\sqrt2\cos\frac{\pi s}{12r}-1}
{\sqrt{\sqrt2-\cos\frac{\pi s}{12r}}}.
\end{align*}
At the origin we set $T_1(0,0)=T_2(0,0)=0$. 
The north-face map $F:[-1,1]^2\to \mathcal{T}_1$ is defined directly by
\begin{equation}
F(s,r)=\left(
T_1\sqrt{1-\frac{T_1^2+T_2^2}{4}},
T_2\sqrt{1-\frac{T_1^2+T_2^2}{4}},
1-\frac{T_1^2+T_2^2}{2}
\right),
\label{eq:direct-face-map}
\end{equation}
where $T_1=T_1(s,r)$, $T_2=T_2(s,r)$, and
\begin{equation}\label{xueq42}
\mathcal{T}_1=\{(x,y,z)\in \Sph : z\geq |x|, z\geq |y|\} \subset\Sph.	
\end{equation}
The map is continuous on the closed square and is smooth in each of the open regions $|r|<|s|$ and $|s|<|r|$ inside $(-1,1)^2$. The set $\{(s,r):|s|=|r|\}$, where the two formulas meet, is called the formula seam.  These properties are part of the construction in \cite[Sections~3-5]{RoscaPlonka2011}.

For the six faces, choose the rotations
\[
\begin{aligned}
\mathcal R_1(x,y,z)&=(x,y,z),&
\mathcal R_2(x,y,z)&=(x,-y,-z),\\
\mathcal R_3(x,y,z)&=(z,y,-x),&
\mathcal R_4(x,y,z)&=(-z,y,x),\\
\mathcal R_5(x,y,z)&=(x,z,-y),&
\mathcal R_6(x,y,z)&=(x,-z,y).
\end{aligned}
\]
For $\ell=1,\ldots,6$, set 
\begin{equation*}
F_\ell=\mathcal R_\ell\circ F
\quad\text{and}\quad
\mathcal T_\ell=F_\ell([-1,1]^2)\subset\Sph.	
\end{equation*}
Note that $\mathcal T_\ell$, $\ell=1,\ldots,6$, are the six regions on $\Sph$ where $z,-z,x,-x,y,-y$ equals $\max\{|x|,|y|,|z|\}$, respectively. Every point of $\Sph$ has such a coordinate, so these regions cover the sphere.  Their interiors are pairwise disjoint, and overlaps occur only when two or more coordinates tie in absolute value.

Fix an integer $q\ge1$.  Define the closed parameter squares by
\[
Q_{a,b}=
\left[-1+\frac{2(a-1)}q,-1+\frac{2a}q\right]
\times
\left[-1+\frac{2(b-1)}q,-1+\frac{2b}q\right],
\qquad 1\le a,b\le q.
\]
Order the images by $i=(\ell-1)q^2+(a-1)q+b$ and define
\begin{equation}\label{xueq81}
C_i=F_\ell(Q_{a,b}),
\qquad 1\le i\le N=6q^2,
\end{equation}
so each cell is the closed spherical quadrilateral consisting of its interior and all four boundary curves. The interiors of these cells are pairwise disjoint.  Adjacent cells share their common boundary, and these boundaries have zero $\sigma$-measure. The equal-area property of the face map~\cite[Sections~3-5]{RoscaPlonka2011}, together with rotational invariance of $\sigma$,  gives
\[
\Sph=\bigcup_{i=1}^{N}C_i,
\qquad
\sigma(C_i)=\int_{C_i}\dd\sigma=\frac1N, \quad\forall i,
\quad\text{and}\quad
\sigma(C_i\cap C_j)=0\quad\forall i\ne j.
\]
Consequently, for every integrable $f$, the average of $f$ on $C_i$ is $N\int_{C_i}f\dd\sigma$.  Moreover,
\[
\sum_{i=1}^N\int_{C_i}f\dd\sigma=\int_{\Sph}f\dd\sigma.
\]

%

\begin{figure}[!t]
\centering
\includegraphics[width=.6\textwidth]
{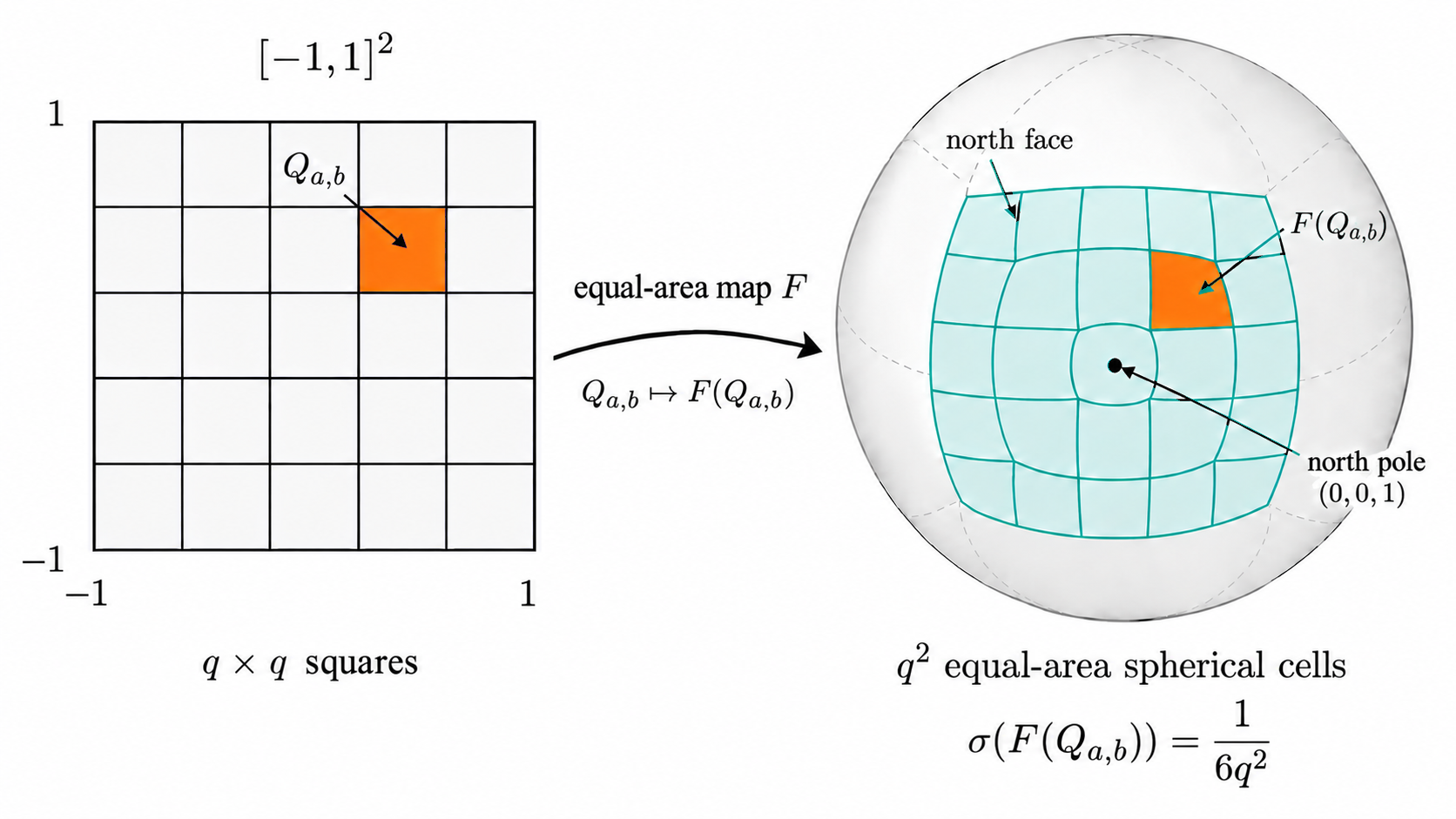}
\caption{The equal-area correspondence on one cubed-sphere face.  A uniform
$q\times q$ subdivision of $[-1,1]^2$ is carried by the corresponding
face map to $q^2$
equal-measure spherical cells.  Over all six faces there are $N=6q^2$
cells, each satisfying $\sigma(C_i)=1/N$.  The highlighted planar
square and spherical cell form one corresponding pair.}
\label{fig:equal-measure-face-map}
\end{figure}

\begin{figure}[!t]
\centering
\includegraphics[
width=.3\textwidth,
trim=110 80 110 100,
clip
]{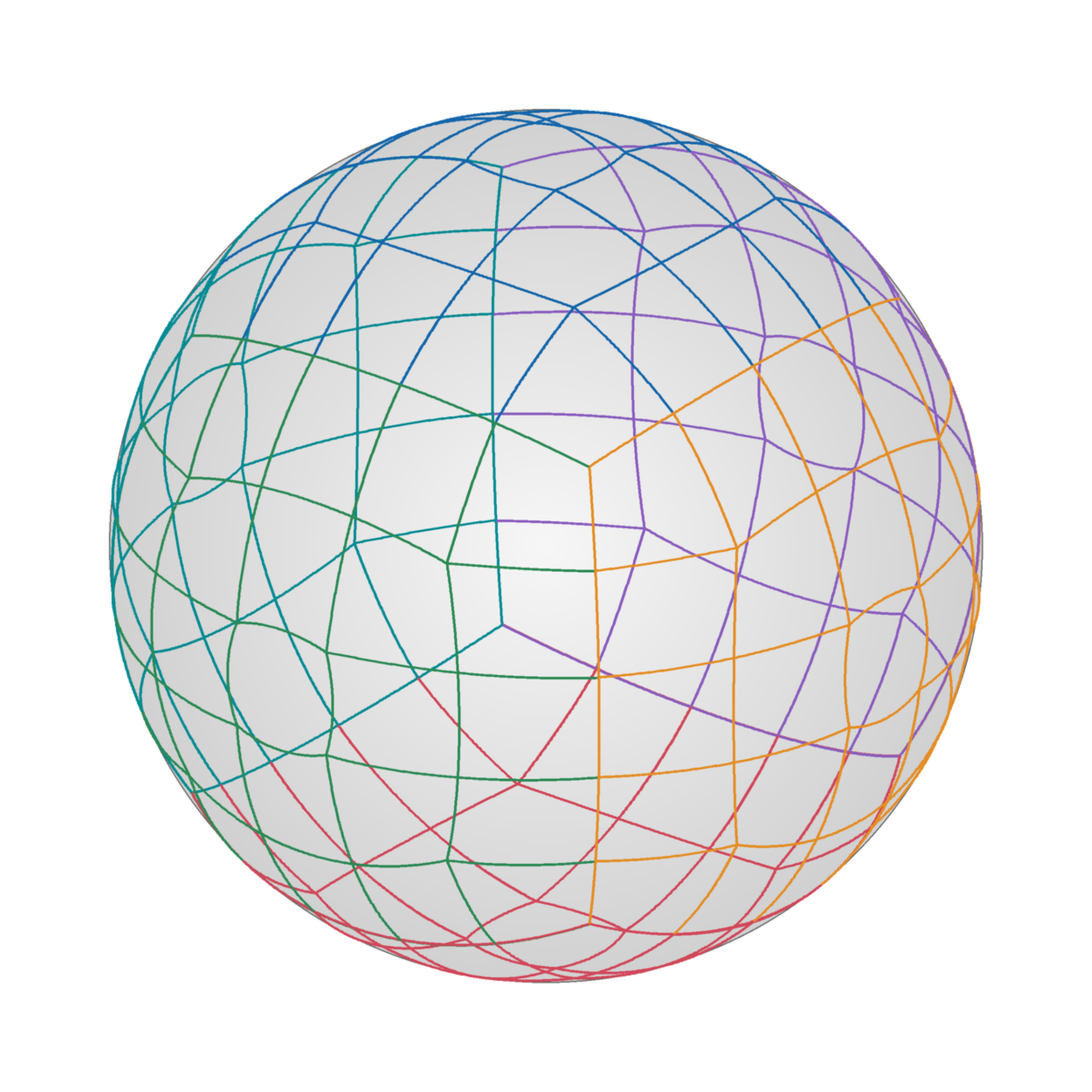}
\caption{The Ro\c{s}ca-Plonka cubed-sphere grid with $q=5$.  Each 
grid square corresponds to a spherical cell of normalized measure $1/N$.}
\label{fig:grid}
\end{figure}

\subsection{Cellwise Karcher centering}
\label{sec:karcher-centering}

\begin{celldefinition}
\label{def:cell-karcher-center}
Fix an integer $q\ge1$, set $N=6q^2$, and let
$C_1,\ldots,C_N$ be the equal-area cubed-sphere cells constructed in \eqref{xueq81}.
For each $i$, define the cell energy
\[
\mathcal E_i(\bm z)
:=\int_{C_i}\dist(\bm z,\bm y)^2\dd\sigma(\bm y),
\qquad \bm z\in\Sph.
\]
A minimizer of $\mathcal E_i$ is called a \emph{Karcher center of the
cubed-sphere cell $C_i$}, that is,
the Karcher mean (or Riemannian center of mass) of
the normalized area measure on $C_i$ (see also  \cite{Afsari2011,Karcher1977}).

\end{celldefinition}

\begin{remark}
\label{rem:cell-karcher-center}
Each closed cell $C_i$ lies in a geodesic ball of radius less than $\pi/2$.  To see this, first consider the closed north face $\mathcal T_1$ defined in equation~\eqref{xueq42}.  Every $\bm y=(y_1,y_2,y_3)\in\mathcal T_1$ satisfies $y_3\ge|y_1|$, $y_3\ge|y_2|$, and hence $1=y_1^2+y_2^2+y_3^2\le3y_3^2$.  Thus, with $\bm e_3=(0,0,1)$,
\[
\dist(\bm e_3,\bm y)=\arccos y_3
\le\arccos\frac1{\sqrt3}<\frac\pi2,
\quad \forall \bm y\in \mathcal T_1.
\]
Since $\arccos(1/\sqrt3)<\pi/3$, we have $\mathcal T_1\subset B(\bm e_3,\pi/3)$. Rotating this inclusion gives, for every cell on face $\ell$,
\[
C_i\subset\mathcal T_\ell
\subset B\left(\mathcal R_\ell\bm e_3,\frac\pi3\right).
\]
Afsari's global uniqueness theorem for the Riemannian $L^2$ center of mass \cite[Theorem~2.1]{Afsari2011} therefore gives a unique Karcher center $\bm c_i$, which lies in the same ball.  Consequently, $\dist(\bm c_i,\bm y)<2\pi/3<\pi$ for $\bm y\in C_i$, so the energy is smooth at $\bm c_i$.  Its vanishing gradient, equivalently the first-variation formula in \cite[equation~(2.5)]{Afsari2011}, gives
\begin{equation}
\int_{C_i}\xi_{\bm c_i}(\bm y)\dd\sigma(\bm y)=\bm0,
\label{eq:karcher-first-variation}
\end{equation}
where $\xi_{\bm c_i}(\bm y)$ is defined in equation~\eqref{eq:tangent-displacement}.

\end{remark}
The next lemma quantifies the radius and second distance moment about these centers.  These two estimates will bound all the moments used in Section~\ref{sec:sample}.

\begin{lemma}
\label{lem:karcher-localization}
Let $q\geq 6$ be an integer and let $N=6q^2$. Let $C$ be one of the equal-area cells constructed in \eqref{xueq81}. 
Let $\bm c$ be its Karcher center. 
\begin{enumerate}
\item[\rm (i)] We have $C \subset B\left(\bm c,\frac{15}{\sqrt{8N}}\right)$.

\item[\rm (ii)] We have
\begin{equation}
\int_{C}\dist(\bm c,\bm y)^2\dd\sigma(\bm y)
<\frac8{3N^2}.
\label{eq:centered-cell-quarter}
\end{equation}
\end{enumerate}

\end{lemma}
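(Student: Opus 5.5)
We need to bound the cell radius about its Karcher center and the second moment. The natural route is through the Jacobian of the face map $F$. We bound the geodesic Lipschitz constant of $F$ on each smooth region, giving a diameter bound for $C$. We then bound the second moment by comparing it with the moment about a convenient point. Since $\bm c$ minimizes $\mathcal E$, we have $\mathcal E(\bm c)\le\mathcal E(F(\bm p_0))$ for any $\bm p_0$, for instance the image of the parameter-square center.

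\textbf{Step 1 (metric distortion of $F$).} On each open region $|r|<|s|$ and $|s|<|r|$, compute $\bm J_F$ explicitly from \eqref{eq:direct-face-map} and the formulas for $T_1,T_2$. The map is area preserving up to the constant $\pi/6$: the face has normalized area $1/6$ and the square has area $4$, so $\det(\bm J_F^T\bm J_F)^{1/2}=\pi/6$ in surface-area units. We then bound the largest singular value $\Lambda:=\sup\|\bm J_F\|_{\op}$ (in surface-area units) by an explicit constant. By Lipschitz continuity across the formula seam, any two points of $Q_{a,b}$, joined by a segment of length at most $2\sqrt2/q$, map to points at geodesic distance at most $2\sqrt2\Lambda/q$. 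Since $N=6q^2$ gives $1/q=\sqrt6/\sqrt N$, this yields
\[
\operatorname{diam}(C)\le \frac{2\sqrt{12}\,\Lambda}{\sqrt N}.
\]
The target in (i) is $15/\sqrt{8N}\approx 5.30/\sqrt N$. A clean route is to show $\Lambda\le\sqrt{\pi}\cdot$(a small constant) by writing $\bm J_F^T\bm J_F$ in terms of $(\rho,\theta)=(s,\pi r/(12s))$ coordinates and bounding the anisotropy ratio. Singular values $\lambda_1\lambda_2=\pi/6$ with ratio at most $\kappa$ give $\lambda_1\le\sqrt{\kappa\pi/6}$.

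\textbf{Step 2 (radius about $\bm c$).} Remark~\ref{rem:cell-karcher-center} only places $\bm c$ in a large ball, so we need a sharper location. By Afsari's theorem, $\bm c$ lies in the geodesic convex hull of $C$, and more precisely in any closed geodesic ball of radius less than $\pi/2$ containing $C$. Take the ball $B(\bm m,\rho)$ of minimal radius containing $C$. By Jung's theorem on the sphere, $\rho\lesssim \operatorname{diam}(C)/\sqrt3$ up to a curvature correction. Then $\dist(\bm c,\bm y)\le \dist(\bm c,\bm m)+\rho\le2\rho$. This may be too lossy. The alternative is to use that the radius about any point of the convex hull is at most the diameter, so $C\subset B(\bm c,\operatorname{diam}(C))$, and to choose Step 1 sharply enough that $\operatorname{diam}(C)<15/\sqrt{8N}$. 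I will try the diameter route first.

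\textbf{Step 3 (second moment).} Let $\bm p=F_\ell(\bm p_0)$ with $\bm p_0$ the center of $Q_{a,b}$. Changing variables,
\[
\int_C\dist(\bm c,\bm y)^2\dd\sigma\le\int_C\dist(\bm p,\bm y)^2\dd\sigma=\frac{1}{24}\int_{Q_{a,b}}\dist(F(\bm p_0),F(\bm w))^2\dd\bm w.
\]
Here $1/24$ is the constant Jacobian factor $1/(6\cdot 4)$ relative to normalized measure. Bounding $\dist(F(\bm p_0),F(\bm w))\le\Lambda_{\mathrm{geo}}\|\bm w-\bm p_0\|$ and using $\int_{Q}\|\bm w-\bm p_0\|^2\dd\bm w=(2/q)^4/6$ gives a bound of order $\Lambda_{\mathrm{geo}}^2/(q^4)$, that is, $36\Lambda_{\mathrm{geo}}^2/(6\cdot 24\cdot N^2)\cdot 16$. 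This must come out below $8/(3N^2)$, which requires $\Lambda_{\mathrm{geo}}^2\lesssim 2/3$, with $\Lambda_{\mathrm{geo}}$ in unit-sphere geodesic units per parameter unit.

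\textbf{Main obstacle.} The uniform Lipschitz bound for $F$ in Step 1 is the hardest part, especially near the seam and the face corners, where the Ro\c{s}ca--Plonka map is most anisotropic. I expect to need an elementary but careful bound on the trigonometric expressions in $\theta\in[-\pi/12,\pi/12]$. The hypothesis $q\ge6$ will be used to absorb curvature corrections such as $\sin r/r$ factors.
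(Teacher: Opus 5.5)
\textbf{There is a genuine gap, and it is in the strategy rather than in the details of Step~1.} Both parts of your plan rest on a uniform Lipschitz bound for $F$, and that bound is quantitatively too weak for either constant.

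\textbf{Part (i).} Since $\det(\bm J_F^T\bm J_F)=\pi^2/36$, every Lipschitz bound satisfies $\Lambda\ge\sqrt{\pi/6}\approx0.724$. Your diameter route needs $2\sqrt2\Lambda/q<15/\sqrt{8N}=5\sqrt3/(4q)$, i.e.\ $\Lambda<0.766$, so the map would have to be almost conformal. It is not. Just below the formula seam ($\theta\to\pi/12$, $s\to0$) one computes $\lambda_{\max}(\bm J_F^T\bm J_F)\approx0.815$, so $\Lambda\approx0.90$.

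Worse, the inequality you want is false. Take $q$ large and a grid cell lying just below the diagonal $r=s$, away from the face center. The parameter diagonal of that cell in direction $(1,-1)$ is mapped to length $\approx1.21\,\delta$, where $\delta=2/q$. But $15/\sqrt{8N}=\tfrac{5\sqrt3}{8}\delta\approx1.08\,\delta$. Hence neither ``radius about a hull point $\le\operatorname{diam}(C)$'' nor the minimal-ball bound (which gives $2\rho\ge\operatorname{diam}(C)$) can yield (i).

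What is missing is a quantitative localization of $\bm c$ near $\bm x=F(\bm p)$. The paper proceeds as follows:
\begin{enumerate}
\item It takes $R=\tfrac{3\sqrt6}{8}\cdot\tfrac{\delta}{\sqrt2}$, so that $C\subset B(\bm x,R)$ and $N\int_C\dist(\bm x,\bm y)^2\dd\sigma<R^2/3$.
\item It writes the first-variation identity $\int_C\xi_{\bm c}(\bm y)\dd\sigma(\bm y)=\bm0$ in coordinates with $\bm c$ at the pole. There the weights $\dist(\bm c,\bm y)/\sin\dist(\bm c,\bm y)$ lie in $[1,3R/\sin 3R)$.
\item Cauchy--Schwarz then gives $\dist(\bm c,\bm x)<2R/3$, hence the radius bound $R+2R/3=5R/3=15/\sqrt{8N}$.
\end{enumerate}

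\textbf{Part (ii).} You correctly derive that Step~3 needs $\Lambda^2<2/3$. But $\Lambda^2\approx0.815$ near the seam, and already $\approx0.686$ at the midpoint of a face edge. So your bound $N\int_C\dist^2\dd\sigma\le\Lambda^2\delta^2/6$ exceeds $\delta^2/9=8/(3N)$.

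The loss is structural. A Lipschitz bound only sees $\lambda_{\max}$ of the metric $\bm H=\bm J_F^T\bm J_F$. To leading order, however, the second moment about the image of the square's center is $\tr\bm H\,\delta^2/12\approx0.096\,\delta^2$.

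The paper recovers the trace in three steps:
\begin{enumerate}
\item By Karcher minimality it compares with the normalized Euclidean centroid $\bm z$.
\item It passes to the Euclidean variance $V$ through a chord--arc factor of at most $256/249$.
\item It bounds $V$ by the weighted Poincar\'e inequality $\int_I|f-f_*|^2\le\int_I\frac{a^2-s^2}{2}|f'|^2$, applied in each variable.
\end{enumerate}
After splitting the weights, $\tr\bm H<7/6$ is paired with a weight of total mass $\delta^4/16$. The bound $\lambda_{\max}(\bm H)<27/32$ is used only on the remainder $|x^2-y^2|/2$, of mass $\delta^4/24$. This gives $V\le\frac{83}{768}\delta^2$ and, after the chord--arc factor, exactly $\delta^2/9$. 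Your proposal lacks a trace-sensitive estimate of this kind, together with explicit bounds on both $\tr\bm H$ and $\lambda_{\max}(\bm H)$.
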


\begin{proof}
See  Appendix~\ref{app:karcher-localization-proof}.
\end{proof}

\section{Sampling at Karcher centers}
\label{sec:sample}

This section converts the cell geometry into bounds for gradient sampling and quadrature at the Karcher centers.  An ordered recurrence expresses both errors in terms of the distance-moment tails $\beta_1$ and $\beta_2$.  The quadrature bound then gives an explicit bound for the gradient sampling constant, which also controls sampled Hessians.

\subsection{Distance moments and the radial recurrence}
Let $t\ge1$ and $q\ge1$ be integers.  Set $N=6q^2$ and let $C_1,\ldots,C_N$ be the equal-area cubed-sphere cells constructed in \eqref{xueq81}. For each $i\in\{1,2,\ldots,N\}$, let $\bm c_i$ be the unique Karcher center of $C_i$.  For integers $m\ge1$ and $j\ge1$, define
\begin{equation}
\mu_m=N\max_{1\leq i\leq N}\int_{C_i}
\dist(\bm c_i,\bm y)^m\dd\sigma(\bm y),
\qquad
\beta_j=\sum_{m=j}^{\infty}\frac{\mu_m}{m!}
\left(\frac{\pi t}{2}\right)^m.
\label{eq:centered-tails}
\end{equation}
The factor $N$ makes each integral in $\mu_m$ a cell average. Since $\sigma(C_i)=1/N$ and $\dist(\bm c_i,\bm y)\le\pi$, we have
\[
0\le\mu_m
=N\max_{1\le i\le N}\int_{C_i}
\dist(\bm c_i,\bm y)^m\,\dd\sigma(\bm y)
\le\pi^m.
\]
Consequently, for every fixed $t\ge1$ and every integer $j\ge1$,
\[
0\le \beta_j
\le\sum_{m=j}^{\infty}\frac{1}{m!}
\left(\frac{\pi^2t}{2}\right)^m
\le\exp\left(\frac{\pi^2t}{2}\right)<\infty,
\]
so each series $\beta_j$ converges by comparison with the exponential series.

To bound the sampled gradient norm uniformly over polynomials, define
\begin{equation}\label{xueq12}
\mathcal{R}_{\mathrm{max}}=\sup_{H\in\Pi_t^0,\|H\|=1}
\frac1N\sum_{i=1}^N\|\grad H(\bm c_i)\|_{\ell_2}.
\end{equation}
The quantity $\mathcal{R}_{\mathrm{max}}$ is finite by the first estimate in Lemma~\ref{lem:finite-dimensional-remainder-bounds}.  For $Q\in\Pi_t$, an integer $k\ge0$, and $0\le s\le1$, define
\[
W_k^Q(s)=\sum_{i=1}^N\int_{C_i}
\dist(\bm c_i,\bm y)^k\cdot 
\|\grad Q(\gamma_{\bm c_i,\bm y}(1-s))\|_{\ell_2}\dd\sigma(\bm y),
\]
where the geodesic $\gamma_{\bm c_i,\bm y}(\cdot )$ is defined in equation~\eqref{eq:radial-geodesic}. As usual, the radial weight $\dist(\bm c_i,\bm y)^k$ equals $1$ when $k=0$ and equals $0$ at $\bm y=\bm c_i$ when $k\ge1$.

The quantities $W_k^Q$ compare gradients along the geodesics from the cells to their centers while retaining the $k$-th power of the distance. In particular, $W_0^Q(0)=\|Q\|$ and $W_0^Q(1)=\frac1N\sum_{i=1}^N\|\grad Q(\bm c_i)\|_{\ell_2}$. The next lemma raises the distance power by one at each rotational differentiation and bounds the endpoint by $\mu_m$. Iterating it will give the gradient error involving $\beta_1$ in Lemma~\ref{lem:centered-gradient-closure} and, after the linear Taylor term cancels, the quadrature error involving $\beta_2$ in Lemma~\ref{lem:centered-value-closure}.

\begin{lemma}
\label{lem:centered-radial-W-estimates}
For every $Q\in\Pi_t$, every integer $k\ge0$, and $0\le s\le1$,
\begin{equation} 
\left|W_k^Q(s)-W_k^Q(1)\right|
\le2\int_s^1\int_{\Sph}
W_{k+1}^{\Rot_{\bm u}Q}(v)
\dd\sigma(\bm u)\dd v,
\label{eq:ordered-radial-recurrence}
\end{equation}
where the rotational derivative $\Rot_{\bm u}$ is defined in equation~\eqref{eq:rotational-derivative}. Moreover,  for $m\ge1$ and $Q\in\Pi_t^0$,
\begin{equation}\label{xueq9}
W_m^Q(1)
\le\mu_m\mathcal{R}_{\mathrm{max}}\|Q\|.
\end{equation}
Here $\mathcal{R}_{\mathrm{max}}$ and $\mu_m$ are defined in equation~\eqref{xueq12} and equation~\eqref{eq:centered-tails}, respectively.

\end{lemma}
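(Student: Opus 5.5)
Two separate arguments are needed: a pointwise estimate along each radial geodesic for \eqref{eq:ordered-radial-recurrence}, and a cellwise comparison for \eqref{xueq9}.

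\emph{The recurrence.} Fix $i$, $\bm y\in C_i$ with $\bm y\neq\bm c_i$, and set $r=\dist(\bm c_i,\bm y)$. Consider the curve $\eta(v)=\gamma_{\bm c_i,\bm y}(1-v)$ for $v\in[s,1]$. It has constant speed $r$, with $\eta(1)=\bm c_i$ and $\eta(s)=\gamma_{\bm c_i,\bm y}(1-s)$.
\begin{enumerate}
\item[(a)] Apply Lemma~\ref{lem:spherical-gradient-variation-curve} with $H=Q$ on $[s,1]$. This gives
\[
\bigl|\|\grad Q(\eta(s))\|_{\ell_2}-\|\grad Q(\eta(1))\|_{\ell_2}\bigr|\le r\int_s^1\|\nabla^2Q(\eta(v))\|_{\op}\dd v .
\]
\item[(b)] Bound the Hessian by \eqref{eq:hessian-axis-average}:
\[
\|\nabla^2Q(\eta(v))\|_{\op}\le 2\int_{\Sph}\|\grad\Rot_{\bm u}Q(\eta(v))\|_{\ell_2}\dd\sigma(\bm u).
\]
\item[(c)] Multiply by $r^k$, so the weight becomes $r^{k+1}=\dist(\bm c_i,\bm y)^{k+1}$. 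Integrate over $\bm y\in C_i$, sum over $i$, and use the triangle inequality to pull the absolute value outside the sum.
\item[(d)] Exchange the order of integration by Tonelli; the integrand is nonnegative and jointly continuous in $(\bm y,v,\bm u)$ on each closed cell. Since $\eta(v)=\gamma_{\bm c_i,\bm y}(1-v)$, the inner quantity is exactly $W_{k+1}^{\Rot_{\bm u}Q}(v)$.
\end{enumerate}
This yields \eqref{eq:ordered-radial-recurrence}.

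The point $\bm y=\bm c_i$ contributes nothing, since the curve is constant there. The only technical care is measurability of the integrand, which follows from the continuity of $(\bm y,v)\mapsto\gamma_{\bm c_i,\bm y}(1-v)$ on $C_i\times[0,1]$. This continuity holds because $\dist(\bm c_i,\bm y)<\pi$ by Remark~\ref{rem:cell-karcher-center}, and the geodesic formula extends continuously to $\bm y=\bm c_i$.

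\emph{The endpoint bound.} At $s=1$ we have $\gamma_{\bm c_i,\bm y}(0)=\bm c_i$, so
\[
W_m^Q(1)=\sum_i\|\grad Q(\bm c_i)\|_{\ell_2}\int_{C_i}\dist(\bm c_i,\bm y)^m\dd\sigma(\bm y).
\]
By the definition of $\mu_m$, each cell integral is at most $\mu_m/N$. Hence
\[
W_m^Q(1)\le\mu_m\cdot\frac1N\sum_i\|\grad Q(\bm c_i)\|_{\ell_2}.
\]
For $Q\in\Pi_t^0$ with $Q\ne0$, normalize $H=Q/\|Q\|$; homogeneity and the definition \eqref{xueq12} of $\mathcal R_{\max}$ then give $\frac1N\sum_i\|\grad Q(\bm c_i)\|_{\ell_2}\le\mathcal R_{\max}\|Q\|$. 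The case $Q=0$ is trivial. This proves \eqref{xueq9}.

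The main thing I expect to check is the Tonelli interchange and the correct identification of the reparametrized geodesic with the argument $1-v$ in the definition of $W$. There is no real analytic obstacle, since all the inequalities used are stated earlier in the paper.
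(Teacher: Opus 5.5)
Your proposal is correct and follows essentially the same route as the paper. You apply Lemma~\ref{lem:spherical-gradient-variation-curve} along the reversed geodesic $\gamma_{\bm c_i,\bm y}(1-v)$ on $[s,1]$, which has speed $\dist(\bm c_i,\bm y)$, bound the Hessian by \eqref{eq:hessian-axis-average}, and then exchange the integrals; the endpoint bound \eqref{xueq9} is obtained by the same cellwise comparison with $\mu_m$ and $\mathcal{R}_{\mathrm{max}}$. Your remarks on measurability and Tonelli, and the normalization $H=Q/\|Q\|$, only make explicit steps that the paper leaves implicit.
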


\begin{proof}
Note that $\gamma_{\bm c_i,\bm y}(0)=\bm c_i$ and
\begin{equation}\label{xueq15}
\begin{aligned}
&\left|W_k^Q(s)-W_k^Q(1)\right|\\
&\leq 	\sum_{i=1}^N\int_{C_i}
\dist(\bm c_i,\bm y)^k\cdot 
\bigg|\|\grad Q(\gamma_{\bm c_i,\bm y}(1-s))\|_{\ell_2}-\|\grad Q(\gamma_{\bm c_i,\bm y}(0))\|_{\ell_2}\bigg|\dd\sigma(\bm y).
\end{aligned}	
\end{equation}
Applying Lemma~\ref{lem:spherical-gradient-variation-curve} to $\bm z(v):=\gamma_{\bm c_i,\bm y}(1-v)$ on the interval $[s,1]$ gives
\begin{equation}\label{xueq16}
\left|
\|\grad Q(\bm z(s))\|_{\ell_2}-\|\grad Q(\bm z(1))\|_{\ell_2}
\right|
\le \dist(\bm c_i,\bm y)\int_s^1\|\nabla^2Q(\bm z(v))\|_{\op}\dd v.
\end{equation}
Here, we use the fact that the speed of $\bm z(v)$ is $\dist(\bm c_i,\bm y)$. Substituting equation~\eqref{xueq16} into equation~\eqref{xueq15}, we obtain
\begin{equation}\label{xueq1}
\begin{aligned}
\left|W_k^Q(s)-W_k^Q(1)\right|
&\le
\sum_{i=1}^N\int_{C_i}\dist(\bm c_i,\bm y)^{k+1}
\int_s^1\|\nabla^2Q(\gamma_{\bm c_i,\bm y}(1-v))\|_{\op}
\dd v\,\dd\sigma(\bm y).
\end{aligned}
\end{equation}
By the Hessian estimate in equation~\eqref{eq:hessian-axis-average}, we have
\[
\|\nabla^2Q(\gamma_{\bm c_i,\bm y}(1-v))\|_{\op}\leq 2\int_{\Sph}
\|\grad\Rot_{\bm u}Q(\gamma_{\bm c_i,\bm y}(1-v))\|_{\ell_2}
\dd\sigma(\bm u).
\]
Substituting this estimate into equation~\eqref{xueq1} and using the definition of $W_{k+1}^{\Rot_{\bm u}Q}(v)$, we obtain
\[
|W_k^Q(s)-W_k^Q(1)|
\le2\int_s^1\int_{\Sph}
W_{k+1}^{\Rot_{\bm u}Q}(v)
\dd\sigma(\bm u)\dd v.
\]
Finally, if $m\ge1$ and $Q\in\Pi_t^0$, then
\[
\begin{aligned}
W_m^Q(1)
&=\sum_{i=1}^N\|\grad Q(\bm c_i)\|_{\ell_2}
\int_{C_i}\dist(\bm c_i,\bm y)^m\dd\sigma(\bm y)\le
\frac{\mu_m}{N}
\sum_{i=1}^N\|\grad Q(\bm c_i)\|_{\ell_2}\le\mu_m\mathcal{R}_{\mathrm{max}}\|Q\|.
\end{aligned}
\]
Here the first equality uses $\gamma_{\bm c_i,\bm y}(0)=\bm c_i$ from equation~\eqref{eq:radial-geodesic}. The first inequality uses the definition of $\mu_m$ in equation~\eqref{eq:centered-tails}, and the last inequality is exactly the definition of $\mathcal{R}_{\mathrm{max}}$.
\end{proof}

\subsection{Gradient sampling error}  
The following lemma compares the average sampled gradient norm with its integral over the sphere.

\begin{lemma}
\label{lem:centered-gradient-closure}
For every $H\in\Pi_t^0$,
\begin{align}
\left|\frac1N\sum_{i=1}^N\|\grad H(\bm c_i)\|_{\ell_2}-\|H\|\right|
\le \mathcal{R}_{\mathrm{max}} \beta_1 \|H\|,
\label{eq:centered-closure}
\end{align}
where $\mathcal{R}_{\mathrm{max}}$ and $\beta_1$ are defined in equation~\eqref{xueq12} and equation~\eqref{eq:centered-tails}, respectively.

\end{lemma}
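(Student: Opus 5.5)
Since $W_0^H(0)=\|H\|$ and $W_0^H(1)=\frac1N\sum_i\|\grad H(\bm c_i)\|_{\ell_2}$, the left side of \eqref{eq:centered-closure} equals $|W_0^H(0)-W_0^H(1)|$. The plan is to iterate the radial recurrence \eqref{eq:ordered-radial-recurrence} of Lemma~\ref{lem:centered-radial-W-estimates}, bound each term by the endpoint estimate \eqref{xueq9}, and sum the resulting series to obtain $\beta_1$.

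\emph{Inductive bound.} For an integer $k\ge0$ and $Q\in\Pi_t$, I will prove by induction on $M$ the truncated bound
\[
W_k^Q(s)\le \sum_{m=k}^{k+M-1}\frac{(1-s)^{m-k}}{(m-k)!}\Bigl(\tfrac{\pi t}{2}\Bigr)^{m-k}\mu_m\mathcal R_{\max}\|Q\| \;+\;\text{(remainder of order $M$)},
\]
with $\mu_0:=1$ for the $k=0$ term. The remainder should be controlled crudely by Lemma~\ref{lem:finite-dimensional-remainder-bounds}. One step of the induction goes as follows. Write $W_k^Q(s)\le W_k^Q(1)+2\int_s^1\int_{\Sph}W_{k+1}^{\Rot_{\bm u}Q}(v)\,\dd\sigma(\bm u)\,\dd v$. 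For $k\ge1$, bound $W_k^Q(1)\le\mu_k\mathcal R_{\max}\|Q\|$ using \eqref{xueq9}; this is legitimate because $\grad$ and $\|\cdot\|$ are unchanged when $Q$ is replaced by $Q-\int Q\,\dd\sigma$. Then apply the inductive hypothesis to $\Rot_{\bm u}Q$ and use \eqref{eq:rot-average-main}, namely $2\int\|\Rot_{\bm u}Q\|\,\dd\sigma(\bm u)\le\frac{\pi t}{2}\|Q\|$. Integrating in $v$ produces the factor $(1-s)^{j+1}/(j+1)!$. Each rotational derivative keeps the polynomial in $\Pi_t$, and by Lemma~\ref{lem:rotational-estimates}(i) it is in $\Pi_t^0$, so the bounds can be repeated indefinitely.

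\emph{Remainder.} After $M$ steps the remainder is an $M$-fold nested integral of $W_{k+M}^{\Rot_{\bm u_M}\cdots\Rot_{\bm u_1}Q}$. It is at most $\pi^{k+M}\,L_t^{M}\cdots$ times a simplex volume $1/M!$. More precisely, $W_{k+M}^{Q'}(v)\le\pi^{k+M}\sup\|\grad Q'\|\le\pi^{k+M}L_t\|Q'\|$, and $\|Q'\|$ is bounded by $L_t^M\|Q\|$ using the second estimate of Lemma~\ref{lem:finite-dimensional-remainder-bounds}, or better by $(\pi t/4)^M$ after averaging over the axes via \eqref{eq:rot-average-main}. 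Because of the $1/M!$ from the simplex, the remainder tends to $0$ as $M\to\infty$ for fixed $t$. This gives a convergent series bound.

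\emph{Conclusion.} With $k=0$ and $s=0$, the recurrence gives
\[
|W_0^H(0)-W_0^H(1)|\le 2\int_0^1\int_{\Sph}W_1^{\Rot_{\bm u}H}(v)\,\dd\sigma(\bm u)\,\dd v.
\]
Applying the series bound for $k=1$ and integrating in $v$ yields $\sum_{m\ge1}\frac{\mu_m}{m!}(\frac{\pi t}{2})^m\mathcal R_{\max}\|H\|=\mathcal R_{\max}\beta_1\|H\|$.

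The main obstacle is the bookkeeping: the recurrence bounds a \emph{difference}, so one must use $W\le W(1)+|\cdot|$ at every level. One must also check that the $(1-v)$-powers integrate to exactly the factorials $1/m!$. Finally, the infinite iteration has to be justified through the vanishing remainder, and this is where the finite-dimensional constant $L_t$ is needed.
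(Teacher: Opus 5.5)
Your proposal is correct and is essentially the paper's argument. The paper also starts from \eqref{eq:ordered-radial-recurrence} with $k=0$, $s=0$, bounds each endpoint term by \eqref{xueq9}, averages over axes with \eqref{eq:rot-average-main}, and kills the remainder using Lemma~\ref{lem:finite-dimensional-remainder-bounds} together with the simplex factor $1/M!$; it simply packages your pointwise induction in $s$ as the simplex integrals $E_m,R_m$ with $R_m\le E_m+R_{m+1}$. Your alternative remainder bound, which uses a single factor $L_t$ together with the axis-averaged estimate instead of the paper's $L_t^{M+2}$, is a harmless refinement.
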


\begin{proof}
Note that
\[
W_0^H(0)=\|H\|,
\qquad
W_0^H(1)=\frac1N\sum_{i=1}^N\|\grad H(\bm c_i)\|_{\ell_2},
\]	
by the definition of $W_0^H$, by $\gamma_{\bm c_i,\bm y}(1)=\bm y$ and $\gamma_{\bm c_i,\bm y}(0)=\bm c_i$ from equation~\eqref{eq:radial-geodesic}, and by $\sigma(C_i)=1/N$.  Hence it is enough to prove
\[
\left|W_0^H(0)-W_0^H(1)\right|
\le \mathcal{R}_{\mathrm{max}}\beta_1\|H\|.
\]
For axes $\bm u_1,\ldots,\bm u_m$, set
\[
H_{0}=H,
\qquad
H_{j}=\Rot_{\bm u_j}H_{j-1},
\]
and let
\[
\Delta_m
=\{0\le s_1\le\cdots\le s_m\le1\}.
\]
A direct calculation gives
\begin{equation}
\int_{\Delta_m}\prod_{j=1}^{m}\dd s_j
=\frac1{m!}.
\label{eq:value-weighted-simplex-proof2}
\end{equation}
For $m\ge1$, set
\begin{equation}
\begin{aligned}
E_m&:=
2^m\int_{\Delta_m}\int_{(\Sph)^m}
W_m^{H_m}(1)
\prod_{j=1}^m\dd\sigma(\bm u_j)
\prod_{j=1}^m\dd s_j,\\
R_m&:=
2^m\int_{\Delta_m}\int_{(\Sph)^m}
W_m^{H_m}(s_m)
\prod_{j=1}^m\dd\sigma(\bm u_j)
\prod_{j=1}^m\dd s_j
\end{aligned}
\end{equation}
For $m\geq 1$ and $(s_1,\ldots,s_{m})\in\Delta_{m}$, equation~\eqref{eq:ordered-radial-recurrence} in Lemma~\ref{lem:centered-radial-W-estimates} gives
\[
\begin{aligned}
W_{m}^{H_m}(s_{m})
&\le W_{m}^{H_m}(1)+
2\int_{s_{m}}^1\int_{\Sph}
W_{m+1}^{H_{m+1}}(s_{m+1})
\dd\sigma(\bm u_{m+1})\dd s_{m+1}.
\end{aligned}
\]
Multiplying by the prefactor and measures in the definition of $R_{m}$ shows that for each $m\geq 1$,
\begin{equation}\label{xueq6}
R_{m}\le E_{m}+R_{m+1}.
\end{equation}
Applying equation~\eqref{eq:ordered-radial-recurrence} with $k=0$ and $s=0$ gives
\begin{equation}\label{xueq3}
\left|W_0^H(0)-W_0^H(1)\right|
\le
2\int_0^1\int_{\Sph}W_1^{H_1}(s_1)
\dd\sigma(\bm u_1)\dd s_1
=R_1\leq E_1+R_2,
\end{equation}
where the last inequality follows from equation~\eqref{xueq6}. Then, repeatedly using equation~\eqref{xueq6}, we obtain that for any $M\geq 1$,
\begin{equation}\label{xueq5}
\left|W_0^H(0)-W_0^H(1)\right|
\le \sum_{m=1}^ME_m+R_{M+1}.
\end{equation}

We next estimate each $E_m$ and $R_{M+1}$. Note that $H_m\in\Pi_t^0$ by Lemma~\ref{lem:rotational-estimates}  \textup{(i)}, applied successively.  Hence the endpoint estimate in Lemma~\ref{lem:centered-radial-W-estimates} gives
\[
W_m^{H_m}(1)
\le\mu_m\mathcal{R}_{\mathrm{max}}\|H_m\|.
\]
Using equation~\eqref{eq:value-weighted-simplex-proof2}, we get
\[
\begin{aligned}
E_m
&\le
\frac{2^m\mu_m\mathcal{R}_{\mathrm{max}}}{m!}
\int_{(\Sph)^m}\|H_m\|
\prod_{j=1}^m\dd\sigma(\bm u_j) \le \mathcal{R}_{\mathrm{max}}\frac{\mu_m}{m!}
\left(\frac{\pi t}{2}\right)^m\|H\|.
\end{aligned}
\]
Here the second inequality follows by applying equation~\eqref{eq:rot-average-main} successively to $H_j=\Rot_{\bm u_j}H_{j-1}$.

For the remaining term $R_{M+1}$, Remark~\ref{rem:cell-karcher-center} gives $\dist(\bm c_i,\bm y)<\pi$.  We then use the first estimate in Lemma~\ref{lem:finite-dimensional-remainder-bounds}. Let $L_t$ be the constant in Lemma~\ref{lem:finite-dimensional-remainder-bounds}. Since the cells have total $\sigma$-measure $1$,
\[
W_{M+1}^{H_{M+1}}(s_{M+1})
\le \pi^{M+1}L_t\|H_{M+1}\|
\le \pi^{M+1}L_t^{M+2}\|H\|.
\]
The last inequality uses the second estimate in Lemma~\ref{lem:finite-dimensional-remainder-bounds} $M+1$ times.  Therefore, using equation~\eqref{eq:value-weighted-simplex-proof2}, we have
\[
R_{M+1}
\le
2^{M+1}\frac1{(M+1)!}\pi^{M+1}L_t^{M+2}\|H\|
=
L_t\frac{(2\pi L_t)^{M+1}}{(M+1)!}\|H\|,
\]
which tends to zero as $M\to\infty$.  Letting $M\to\infty$ in equation~\eqref{xueq5} proves
\[
\left|W_0^H(0)-W_0^H(1)\right|
\le \mathcal{R}_{\mathrm{max}}\sum_{m=1}^{\infty}\frac{\mu_m}{m!}
\left(\frac{\pi t}{2}\right)^m\|H\|
=\mathcal{R}_{\mathrm{max}}\beta_1\|H\|.
\]
This completes the proof.

\end{proof}

\subsection{Quadrature error at Karcher centers}

The next lemma bounds the quadrature error at the Karcher centers of the cells. Since each Karcher center $\bm c_i$ minimizes the integral of squared geodesic distance over $C_i$, its first-order optimality condition gives
\[
\int_{C_i}\xi_{\bm c_i}(\bm y)\,\dd\sigma(\bm y)=\bm0.
\]
Here $\xi_{\bm c_i}(\bm y)$ is the tangent displacement defined in \eqref{eq:tangent-displacement}. Consequently, the linear term $\langle\grad Q(\bm c_i),\xi_{\bm c_i}(\bm y)\rangle$ in the geodesic Taylor formula vanishes upon integration over $C_i$. The remaining integral remainder is bounded by $\dist(\bm c_i,\bm y)^2$ times a bound for the spherical Hessian along the corresponding geodesic. Thus the quadrature error is controlled by second moments of the distances from the cell centers.

\begin{lemma}
\label{lem:centered-value-closure}
For every $Q\in\Pi_t$,
\begin{equation}
\frac1N\sum_{i=1}^N\left|Q(\bm c_i)-N\int_{C_i}Q\dd\sigma\right|
\le\frac{2\beta_2}{\pi t}\cdot \mathcal{R}_{\mathrm{max}}\cdot \|Q\|,
\label{eq:centered-value-closure}
\end{equation}
where $\mathcal{R}_{\mathrm{max}}$ and $\beta_2$ are defined in equation~\eqref{xueq12} and equation~\eqref{eq:centered-tails}, respectively.

\end{lemma}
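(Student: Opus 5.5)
We apply the geodesic Taylor formula of Lemma~\ref{lem:geodesic-taylor-second-order} on each cell, centred at the Karcher center, and then feed the Hessian remainder into the radial machinery already used for the gradient sampling bound.

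\textbf{Step 1: the linear term cancels.} Fix $i$. For $\bm y\in C_i$ we have $\dist(\bm c_i,\bm y)<\pi$ by Remark~\ref{rem:cell-karcher-center}, so Lemma~\ref{lem:geodesic-taylor-second-order} applies. Integrate it over $C_i$ against $\dd\sigma(\bm y)$ and use $\sigma(C_i)=1/N$. The linear term $\langle\grad Q(\bm c_i),\xi_{\bm c_i}(\bm y)\rangle$ integrates to zero by the first-variation identity \eqref{eq:karcher-first-variation}. This leaves
\[
N\int_{C_i}Q\dd\sigma-Q(\bm c_i)=N\int_{C_i}\int_0^1(1-\tau)\langle\nabla^2Q(\gamma(\tau))\gamma'(\tau),\gamma'(\tau)\rangle\dd\tau\dd\sigma(\bm y).
\]

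\textbf{Step 2: bound the Hessian remainder.} Since $\|\gamma'\|=\dist(\bm c_i,\bm y)$ and by \eqref{eq:hessian-axis-average}, the integrand is at most
\[
2(1-\tau)\dist(\bm c_i,\bm y)^2\int_{\Sph}\|\grad\Rot_{\bm u}Q(\gamma(\tau))\|\dd\sigma(\bm u).
\]
Sum over $i$ and divide by $N$. Substituting $\tau=1-v$, so that $\gamma_{\bm c_i,\bm y}(\tau)=\gamma_{\bm c_i,\bm y}(1-v)$, the left side of \eqref{eq:centered-value-closure} is bounded by
\[
2\int_0^1 v\int_{\Sph}W_2^{\Rot_{\bm u}Q}(v)\dd\sigma(\bm u)\dd v.
\]

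\textbf{Step 3: iterate the radial recurrence with a weighted simplex.} This repeats the proof of Lemma~\ref{lem:centered-gradient-closure}, starting at $m=2$ with the extra weight $s_2$. Set $H_1=\Rot_{\bm u_1}Q$ and $H_j=\Rot_{\bm u_j}H_{j-1}$, with the ordered variables $s_2\le s_3\le\cdots\le s_m$. Define $E_m$ and $R_m$ as before, but integrate over $\{0\le s_2\le\cdots\le s_m\le 1\}$ with weight $s_2$. The weighted simplex volume is
\[
\int s_2\,\dd s_2\cdots\dd s_m=\frac{1}{m!}.
\]
Each step uses \eqref{eq:ordered-radial-recurrence}, which yields $R_m\le E_m+R_{m+1}$ for $m\ge2$. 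The endpoint estimate \eqref{xueq9} applies because $H_{m-1}\in\Pi_t^0$ by Lemma~\ref{lem:rotational-estimates}(i). Applying \eqref{eq:rot-average-main} $m-1$ times gives
\[
E_m\le 2^{m-1}\frac{\mu_m}{m!}\mathcal R_{\max}\left(\frac{\pi t}{4}\right)^{m-1}\|Q\|=\frac{2}{\pi t}\,\mathcal R_{\max}\frac{\mu_m}{m!}\left(\frac{\pi t}{2}\right)^m\|Q\|.
\]
In the gradient lemma there were $m$ rotations and $m$ factors of $2$; here there are $m-1$ of each, and the resulting shortfall of one factor $\pi t/2$ is exactly the $2/(\pi t)$ prefactor. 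The tail $R_{M+1}$ vanishes as $M\to\infty$ by the same $L_t$ argument via Lemma~\ref{lem:finite-dimensional-remainder-bounds}, noting that $\|\Rot_{\bm u}Q\|\le L_t\|Q\|$ holds for all $Q\in\Pi_t$. Summing over $m\ge2$ gives $\frac{2\beta_2}{\pi t}\mathcal R_{\max}\|Q\|$.

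\textbf{Main obstacle.} The delicate step is the bookkeeping in Step 3. The recurrence \eqref{eq:ordered-radial-recurrence} integrates over $[s,1]$ with increasing $s$ and no weight, while the Taylor remainder carries the weight $v$ in the first variable only. One must check that the ordered constraint, with weight $s_2$ on the first variable alone, produces exactly $1/m!$. It does, since the integral of $s_2$ over $0\le s_2\le\cdots\le s_m\le1$ equals $\int_0^1 s_2(1-s_2)^{m-2}/(m-2)!\,\dd s_2=1/m!$.
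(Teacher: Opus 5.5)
Your proposal is correct and matches the paper's proof step for step. The paper also cancels the linear Taylor term with the Karcher first-variation identity, reduces via \eqref{eq:hessian-axis-average} to $2\int_0^1 v\int_{\Sph}W_2^{\Rot_{\bm u}Q}(v)\,\dd\sigma(\bm u)\,\dd v$, iterates \eqref{eq:ordered-radial-recurrence} over the ordered simplex weighted by the first variable (volume $1/m!$), and kills the tail with the $L_t$ bound; your $s_2,\dots,s_m$, $H_{m-1}$ and $E_m$ are exactly its $v_1,\dots,v_{m-1}$, $Q_{m-1}$ and $\widehat E_m$.
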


\begin{proof}
By Remark~\ref{rem:cell-karcher-center}, $\dist(\bm c_i,\bm y)<\pi$ for $\bm y\in C_i$.  Hence Lemma~\ref{lem:geodesic-taylor-second-order}  gives
\begin{equation}
\begin{aligned}
Q(\bm y)-Q(\bm c_i)
&=\langle\grad Q(\bm c_i),\xi_{\bm c_i}(\bm y)\rangle\\
&\quad+\int_0^1(1-\tau)
\left\langle
\nabla^2 Q(\gamma_{\bm c_i,\bm y}(\tau))
\frac{\dd\gamma_{\bm c_i,\bm y}}{\dd\tau}(\tau),
\frac{\dd\gamma_{\bm c_i,\bm y}}{\dd\tau}(\tau)
\right\rangle\dd\tau.
\end{aligned}
\label{eq:value-taylor-local-proof}
\end{equation}
Integrating equation~\eqref{eq:value-taylor-local-proof} with respect to $\bm y$ over $C_i$, the first-order term vanishes by the Karcher first-variation identity equation~\eqref{eq:karcher-first-variation}:
\begin{equation}
\int_{C_i}
\langle\grad Q(\bm c_i),\xi_{\bm c_i}(\bm y)\rangle\dd\sigma(\bm y)
=
\left\langle
\grad Q(\bm c_i),
\int_{C_i}\xi_{\bm c_i}(\bm y)\dd\sigma(\bm y)
\right\rangle
=\left\langle
\grad Q(\bm c_i),
\bm0
\right\rangle
=0.
\label{eq:value-first-order-vanishes-proof}
\end{equation}
Combining equation~\eqref{eq:value-taylor-local-proof} and equation~\eqref{eq:value-first-order-vanishes-proof}, we get
\begin{equation}
\begin{aligned}
&\frac{1}{N}Q(\bm c_i)-\int_{C_i}Q\dd\sigma\\
&=-\int_{C_i}\int_0^1(1-\tau)
\left\langle
\nabla^2 Q(\gamma_{\bm c_i,\bm y}(\tau))
\frac{\dd\gamma_{\bm c_i,\bm y}}{\dd\tau}(\tau),
\frac{\dd\gamma_{\bm c_i,\bm y}}{\dd\tau}(\tau)
\right\rangle\dd\tau\dd\sigma(\bm y).
\end{aligned}
\label{eq:value-remainder-identity-proof}
\end{equation}
Taking absolute values and using both the operator norm and the constant-speed property in equation~\eqref{eq:radial-geodesic}, namely
\[
\left\|\frac{\dd\gamma_{\bm c_i,\bm y}}{\dd\tau}(\tau)\right\|_{\ell_2}
=\dist(\bm c_i,\bm y),
\]
we obtain from equation~\eqref{eq:value-remainder-identity-proof} that
\begin{equation}
\frac1N\left|
Q(\bm c_i)-N\int_{C_i}Q\dd\sigma
\right|
\le
\int_{C_i}\int_0^1
(1-\tau)\dist(\bm c_i,\bm y)^2
\|\nabla^2Q(\gamma_{\bm c_i,\bm y}(\tau))\|_{\op}
\dd\tau\dd\sigma(\bm y).
\label{eq:value-local-hessian-proof}
\end{equation}
Using the change of variables $v_1=1-\tau$ gives
\begin{equation}
\begin{aligned}
\frac1N\left|
Q(\bm c_i)-N\int_{C_i}Q\dd\sigma
\right|
&\le
\int_{C_i}\int_0^1
v_1\dist(\bm c_i,\bm y)^2
\|\nabla^2Q(\gamma_{\bm c_i,\bm y}(1-v_1))\|_{\op}
\dd v_1\dd\sigma(\bm y).
\end{aligned}
\label{eq:value-local-vone-proof}
\end{equation}
By the Hessian estimate in equation~\eqref{eq:hessian-axis-average},
\begin{equation}
\|\nabla^2Q(\gamma_{\bm c_i,\bm y}(1-v_1))\|_{\op}
\le
2\int_{\Sph}
\|\grad\Rot_{\bm u}Q(\gamma_{\bm c_i,\bm y}(1-v_1))\|_{\ell_2}
\dd\sigma(\bm u).
\label{eq:value-hessian-axis-proof}
\end{equation}
Substituting equation~\eqref{eq:value-hessian-axis-proof} into equation~\eqref{eq:value-local-vone-proof} and summing over $i$, we obtain
\begin{equation}
\begin{aligned}
&\frac1N\sum_{i=1}^N
\left|Q(\bm c_i)-N\int_{C_i}Q\dd\sigma\right|\\
&\le
2\int_{\Sph}\int_0^1v_1
\sum_{i=1}^N\int_{C_i}
\dist(\bm c_i,\bm y)^2 \cdot
\|\grad\Rot_{\bm u}Q(\gamma_{\bm c_i,\bm y}(1-v_1))\|_{\ell_2}
\dd\sigma(\bm y)\dd v_1\dd\sigma(\bm u)\\
&=
2\int_{\Sph}\int_0^1v_1
W_2^{\Rot_{\bm u}Q}(v_1)
\dd v_1\dd\sigma(\bm u),
\end{aligned}
\label{eq:value-reduction-to-Wtwo-proof}
\end{equation}
where the last equality follows from the definition of $W_2^{\Rot_{\bm u}Q}$.

We next bound the last integral in equation~\eqref{eq:value-reduction-to-Wtwo-proof}. For axes $\bm u_1,\ldots,\bm u_{m-1}$, set
\[
Q_0=Q,
\qquad
Q_j=\Rot_{\bm u_j}Q_{j-1}.
\]
For $m\ge2$, let
\[
\Lambda_m
=\{(v_1,\ldots,v_{m-1}):0\le v_1\le\cdots\le v_{m-1}\le1\}.
\]
Integrating first over $v_2,\ldots,v_{m-1}$ gives
\begin{equation}
\int_{\Lambda_m}v_1\prod_{j=1}^{m-1}\dd v_j
=\int_0^1v_1\frac{(1-v_1)^{m-2}}{(m-2)!}\dd v_1
=\frac1{m!}.
\label{eq:value-weighted-simplex-proof}
\end{equation}
For $m\geq 2$, set
\begin{equation}\label{xueq7}
\begin{aligned}
\widehat{E}_m&:=
2^{m-1}\int_{\Lambda_m}\int_{(\Sph)^{m-1}}
v_1\cdot W_m^{Q_{m-1}}(1)
\prod_{j=1}^{m-1}\dd\sigma(\bm u_j)
\prod_{j=1}^{m-1}\dd v_j,\\
\widehat{R}_m&:=
2^{m-1}\int_{\Lambda_m}\int_{(\Sph)^{m-1}}
v_1\cdot W_m^{Q_{m-1}}(v_{m-1})
\prod_{j=1}^{m-1}\dd\sigma(\bm u_j)
\prod_{j=1}^{m-1}\dd v_j	
\end{aligned}	
\end{equation}
By a calculation similar to the one used in equation~\eqref{xueq6}, we obtain, for any $m\geq 2$,
\begin{equation}\label{xueq8}
\widehat{R}_m\leq \widehat{E}_m	+\widehat{R}_{m+1}.
\end{equation}
Equation~\eqref{eq:ordered-radial-recurrence} in Lemma~\ref{lem:centered-radial-W-estimates}, applied to $W_2^{Q_1}(v_1)$, gives
\[
W_2^{Q_1}(v_1)
\le
W_2^{Q_1}(1)
+2\int_{v_1}^1\int_{\Sph}
W_3^{Q_2}(v_2)\dd\sigma(\bm u_2)\dd v_2.
\]
Multiplying by $2v_1$ and integrating in $v_1$ and $\bm u_1$ gives 
\[
\frac1N\sum_{i=1}^N
\left|Q(\bm c_i)-N\int_{C_i}Q\dd\sigma\right|
\leq 2\int_{\Sph}\int_0^1v_1
W_2^{\Rot_{\bm u_1}Q}(v_1)
\dd v_1\dd\sigma(\bm u_1)
\le \widehat{E}_2+\widehat{R}_3.
\]
Here, the first inequality follows from equation~\eqref{eq:value-reduction-to-Wtwo-proof}. Repeatedly using equation~\eqref{xueq8}, we obtain, for any $M\ge2$,
\begin{equation} 
\frac1N\sum_{i=1}^N
\left|Q(\bm c_i)-N\int_{C_i}Q\dd\sigma\right|
\le\sum_{m=2}^M\widehat{E}_m+\widehat{R}_{M+1}.
\label{eq:value-finite-radial-expansion-proof}
\end{equation}
Using equation~\eqref{xueq9} in Lemma~\ref{lem:centered-radial-W-estimates}, the weighted volume identity in equation~\eqref{eq:value-weighted-simplex-proof}, and equation~\eqref{eq:rot-average-main}, we obtain
\[
\begin{aligned}
\widehat{E}_m
&\le
\frac{2^{m-1}\mu_m\mathcal{R}_{\mathrm{max}}}{m!}
\int_{(\Sph)^{m-1}}\|Q_{m-1}\|
\prod_{j=1}^{m-1}\dd\sigma(\bm u_j)\le
\frac{2\mathcal{R}_{\mathrm{max}}\mu_m}{\pi tm!}\left(\frac{\pi t}{2}\right)^m\|Q\|.
\end{aligned}
\]
Let $L_t$ be the constant in Lemma~\ref{lem:finite-dimensional-remainder-bounds}.  Since $\dist(\bm c_i,\bm y)<\pi$ by Remark~\ref{rem:cell-karcher-center}, the first estimate in Lemma~\ref{lem:finite-dimensional-remainder-bounds} gives
\[
W_{M+1}^{Q_M}(v_M)
\le \pi^{M+1}L_t\|Q_M\|
\le \pi^{M+1}L_t^{M+1}\|Q\|.
\]
The last inequality uses the second estimate in Lemma~\ref{lem:finite-dimensional-remainder-bounds} $M$ times.  Using equation~\eqref{eq:value-weighted-simplex-proof} with $m=M+1$, we obtain
\[
\widehat{R}_{M+1}
\le
2^M\frac1{(M+1)!}\pi^{M+1}L_t^{M+1}\|Q\|
=\frac{(2\pi L_t)^{M+1}}{2(M+1)!}\|Q\|,
\]
which tends to zero as $M\to\infty$.  Letting $M\to\infty$ in equation~\eqref{eq:value-finite-radial-expansion-proof} proves
\[
\frac1N\sum_{i=1}^N
\left|Q(\bm c_i)-N\int_{C_i}Q\dd\sigma\right|\le
\frac{2\mathcal{R}_{\mathrm{max}}}{\pi t}
\sum_{m=2}^{\infty}\frac{\mu_m}{m!}
\left(\frac{\pi t}{2}\right)^m\|Q\|
=
\frac{2\mathcal{R}_{\mathrm{max}}\beta_2}{\pi t}\|Q\|,
\]
where the last equality is the definition of $\beta_2$ in equation~\eqref{eq:centered-tails}.  This proves the lemma.
\end{proof}

\subsection{The sampling constant and Hessian bounds}
\begin{lemma}
\label{lem:centered-sampling-constant}
If $\beta_2<1$, then
\begin{align}
\mathcal{R}_{\mathrm{max}}&\le\frac1{1-\beta_2},
\label{eq:centered-sampling-constant}
\end{align}
where $\mathcal{R}_{\mathrm{max}}$ and $\beta_2$ are defined in equation~\eqref{xueq12} and equation~\eqref{eq:centered-tails}, respectively.
\end{lemma}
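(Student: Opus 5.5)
The plan is to bound the sampled gradient norm of an arbitrary $H\in\Pi_t^0$ with $\|H\|=1$ by first writing the gradient as an axis average of rotational derivatives, then applying the quadrature estimate of Lemma~\ref{lem:centered-value-closure} to each rotational derivative. This yields a self-referential inequality of the form $\mathcal R_{\max}\le 1+\beta_2\mathcal R_{\max}$. Since $\mathcal R_{\max}<\infty$ by Lemma~\ref{lem:finite-dimensional-remainder-bounds} and $\beta_2<1$, this rearranges to \eqref{eq:centered-sampling-constant}.

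Concretely, fix $H\in\Pi_t^0$ with $\|H\|=1$. By \eqref{eq:axis-average-main} and Fubini,
\[
\frac1N\sum_{i=1}^N\|\grad H(\bm c_i)\|_{\ell_2}
=2\int_{\Sph}\frac1N\sum_{i=1}^N|\Rot_{\bm u}H(\bm c_i)|\dd\sigma(\bm u).
\]
For each fixed axis $\bm u$, set $Q=\Rot_{\bm u}H\in\Pi_t$. I use the triangle inequality $|Q(\bm c_i)|\le |N\int_{C_i}Q\dd\sigma|+|Q(\bm c_i)-N\int_{C_i}Q\dd\sigma|$, together with $|N\int_{C_i}Q\dd\sigma|\le N\int_{C_i}|Q|\dd\sigma$. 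Summing over $i$ and applying Lemma~\ref{lem:centered-value-closure} gives
\[
\frac1N\sum_{i=1}^N|\Rot_{\bm u}H(\bm c_i)|
\le\int_{\Sph}|\Rot_{\bm u}H|\dd\sigma
+\frac{2\beta_2}{\pi t}\,\mathcal R_{\max}\,\|\Rot_{\bm u}H\|.
\]

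Next I multiply this bound by $2$ and integrate over $\bm u\in\Sph$. For the first term, Fubini and \eqref{eq:axis-average-main} applied pointwise in $\bm x$ give $2\int\!\int|\Rot_{\bm u}H(\bm x)|\dd\sigma(\bm u)\dd\sigma(\bm x)=\|H\|=1$. For the second term, \eqref{eq:rot-average-main} gives $\int_{\Sph}\|\Rot_{\bm u}H\|\dd\sigma(\bm u)\le\frac{\pi t}{4}\|H\|$. The second term therefore contributes at most $2\cdot\frac{2\beta_2}{\pi t}\cdot\frac{\pi t}{4}\,\mathcal R_{\max}=\beta_2\mathcal R_{\max}$.

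Taking the supremum over $H$ then yields $\mathcal R_{\max}\le 1+\beta_2\mathcal R_{\max}$, and rearranging gives the claim. The only delicate point I expect is ensuring that the constants combine exactly to $\beta_2$. This requires the factor $2$ in \eqref{eq:axis-average-main} to cancel against $\frac{\pi t}{4}$ from \eqref{eq:rot-average-main} and $\frac{2}{\pi t}$ from Lemma~\ref{lem:centered-value-closure}, which the computation above confirms. Measurability in $\bm u$ is harmless, since everything is continuous in $\bm u$.
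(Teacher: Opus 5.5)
Your proposal is correct and follows the paper's proof: it uses the axis-average identity \eqref{eq:axis-average-main}, splits off the cell averages, and applies Lemma~\ref{lem:centered-value-closure} to $\Rot_{\bm u}H$. Recombining via \eqref{eq:axis-average-main} and \eqref{eq:rot-average-main} gives $\mathcal{R}_{\mathrm{max}}\le 1+\beta_2\mathcal{R}_{\mathrm{max}}$, which you then rearrange using finiteness of $\mathcal{R}_{\mathrm{max}}$ and $\beta_2<1$.
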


\begin{proof}
For $H\in\Pi_t^0$, write
\[
\mathcal A_H=\frac1N\sum_{i=1}^N
\|\grad H(\bm c_i)\|_{\ell_2}.
\]
Using equation~\eqref{eq:axis-average-main}, adding and subtracting the cell averages, and applying Lemma~\ref{lem:centered-value-closure} to $\Rot_{\bm u}H\in\Pi_t^0$, we obtain
\[
\begin{aligned}
\mathcal A_H
&=2\int_{\Sph}\frac1N\sum_{i=1}^N
|\Rot_{\bm u}H(\bm c_i)|\dd\sigma(\bm u)\\
&\le
2\int_{\Sph}\sum_{i=1}^N\int_{C_i}
|\Rot_{\bm u}H(\bm y)|\dd\sigma(\bm y)\dd\sigma(\bm u)
+\frac{4\mathcal R_{\mathrm{max}}\beta_2}{\pi t}
\int_{\Sph}\|\Rot_{\bm u}H\|\dd\sigma(\bm u)\\
&\le \|H\|+\mathcal R_{\mathrm{max}}\beta_2\|H\|.
\end{aligned}
\]
The last line uses equations~\eqref{eq:axis-average-main} and~\eqref{eq:rot-average-main}.  Taking the supremum over $\|H\|=1$ gives
\[
\mathcal R_{\mathrm{max}}\le1+\mathcal R_{\mathrm{max}}\beta_2.
\]
Since $\mathcal R_{\mathrm{max}}$ is finite and $\beta_2<1$, rearranging proves equation~\eqref{eq:centered-sampling-constant}.
\end{proof}

To control the moving points, we also need to bound the average of the operator norms of the spherical Hessians at the sampled points. The next lemma obtains this bound from any gradient sampling estimate by using rotational derivatives, which remain scalar polynomials of degree at most $t$.  Thus no separate sampling space for matrix-valued Hessians is needed.

\begin{lemma}
\label{lem:hessian-sampling-transfer}
Let $N\ge1$, let $\bm z_1,\ldots,\bm z_N\in\Sph$, and let $C>0$. Suppose that
\begin{equation}
\frac1N\sum_{i=1}^N\|\grad Q(\bm z_i)\|_{\ell_2}
\le C\|Q\|
\qquad\forall Q\in\Pi_t^0.
\label{eq:abstract-gradient-sampling}
\end{equation}
Then for every $H\in\Pi_t^0$,
\begin{equation}
\frac1N\sum_{i=1}^N\|\nabla^2 H(\bm z_i)\|_{\op}
\le\frac{\pi t}{2}C\|H\|.
\label{eq:hessian-sampling-transfer}
\end{equation}
\end{lemma}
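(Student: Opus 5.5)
We reduce the Hessian average to gradient averages of rotational derivatives. Then we apply the hypothesis \eqref{eq:abstract-gradient-sampling} to each rotational derivative, and finally average over the axis using the Bernstein-type estimate \eqref{eq:rot-average-main}.

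\emph{Step 1: pointwise Hessian bound.} Fix $H\in\Pi_t^0$. At each node $\bm z_i$, Lemma~\ref{lem:rotational-estimates}\textup{(v)} gives
\[
\|\nabla^2 H(\bm z_i)\|_{\op}\le 2\int_{\Sph}\|\grad\Rot_{\bm u}H(\bm z_i)\|_{\ell_2}\dd\sigma(\bm u).
\]
We average this over $i=1,\ldots,N$. The integrand is continuous and nonnegative in $\bm u$, since $\Rot_{\bm u}H$ depends polynomially on $\bm u$ by \eqref{eq:rotational-derivative-coordinate}. Hence we may interchange the finite sum and the integral, which yields
\[
\frac1N\sum_{i=1}^N\|\nabla^2 H(\bm z_i)\|_{\op}
\le 2\int_{\Sph}\frac1N\sum_{i=1}^N\|\grad\Rot_{\bm u}H(\bm z_i)\|_{\ell_2}\dd\sigma(\bm u).
\]

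\emph{Step 2: apply the sampling hypothesis.} For each fixed axis $\bm u$, Lemma~\ref{lem:rotational-estimates}\textup{(i)} shows that $\Rot_{\bm u}H\in\Pi_t^0$. The hypothesis \eqref{eq:abstract-gradient-sampling} therefore applies with $Q=\Rot_{\bm u}H$ and bounds the inner average by $C\|\Rot_{\bm u}H\|$. This gives
\[
\frac1N\sum_{i=1}^N\|\nabla^2 H(\bm z_i)\|_{\op}\le 2C\int_{\Sph}\|\Rot_{\bm u}H\|\dd\sigma(\bm u).
\]

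\emph{Step 3: Bernstein averaging.} By \eqref{eq:rot-average-main}, the remaining integral is at most $\frac{\pi t}{4}\|H\|$. Combining the constants, $2C\cdot\frac{\pi t}{4}=\frac{\pi t}{2}C$, which is exactly \eqref{eq:hessian-sampling-transfer}.

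The only point requiring care is that the hypothesis covers only mean-zero polynomials. Lemma~\ref{lem:rotational-estimates}\textup{(i)} handles this by placing each rotational derivative in $\Pi_t^0$. Measurability in $\bm u$ is routine, since the integrand is continuous in $\bm u$. No step is a real obstacle: the lemma is a direct composition of parts (i), (iv) and (v) of Lemma~\ref{lem:rotational-estimates}.
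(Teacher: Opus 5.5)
Your proposal is correct and follows the paper's proof exactly. You apply the pointwise Hessian bound (v) at each node, use (i) to put $\Rot_{\bm u}H$ in $\Pi_t^0$ so that the sampling hypothesis applies, and finish with the axis-averaged Bernstein estimate \eqref{eq:rot-average-main}, obtaining the constant $2C\cdot\frac{\pi t}{4}=\frac{\pi t}{2}C$.
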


\begin{proof}
Lemma~\ref{lem:rotational-estimates} shows that $\Rot_{\bm u}H\in\Pi_{t}^0$. Applying equation~\eqref{eq:hessian-axis-average} at each point $\bm{z}_i$ we obtain
\[
\begin{aligned}
\frac1N\sum_{i=1}^N\|\nabla^2 H(\bm z_i)\|_{\op}
&\le2\int_{\Sph}\frac1N\sum_{i=1}^N
\|\grad\Rot_{\bm u}H(\bm z_i)\|_{\ell_2}\dd\sigma(\bm u)\\
&\overset{(a)}\le2C\int_{\Sph}\|\Rot_{\bm u}H\|\dd\sigma(\bm u)\overset{(b)}\le\frac{\pi t}{2}C\|H\|,
\end{aligned}
\]
where $(a)$ follows from equation~\eqref{eq:abstract-gradient-sampling} by taking $Q=\Rot_{\bm u}H$, and $(b)$ follows from equation~\eqref{eq:rot-average-main}. This proves equation~\eqref{eq:hessian-sampling-transfer}.
\end{proof}

\section{Proof of the main theorem}
\label{sec:tag-criterion}

In this section, we prove Theorem~\ref{thm:coarse-main} by combining the sampling estimates from Section~\ref{sec:sample} with a common regularized gradient flow. We first establish Theorem~\ref{thm:karcher-tail}, which reduces the existence of a spherical design to two inequalities involving $\beta_1$ and $\beta_2$. These inequalities ensure that the gain along the flow exceeds the initial quadrature loss. We then use the geometric estimates from Section~\ref{sec:map} to verify these inequalities for every $q\ge 3t$, thereby completing the proof of Theorem~\ref{thm:coarse-main}.

Let $t\ge1$ and $q\ge1$ be integers.  Set $N=6q^2$ and let $C_1,\ldots,C_N$ be the equal-area cubed-sphere cells constructed in \eqref{xueq81}. For each $i\in\{1,2,\ldots,N\}$,  let $\bm c_i$ be the unique Karcher center of $C_i$.  Let $\mu_m$ and $\beta_j$ be defined as in equation~\eqref{eq:centered-tails}.

The common regularized gradient flow follows Bondarenko, Radchenko, and Viazovska~\cite[Sections~2 and~4]{BRV2013}. With Karcher centers as the initial points, the initial loss is proportional to $\beta_2$, while the gradient and Hessian sampling bounds control the gain along the flow.

\begin{theorem}
\label{thm:karcher-tail}
Let $\beta_1$ and $\beta_2$ be as defined in \eqref{eq:centered-tails}, and set $h(x):=x\ln x+1-x$ for $x>0$. If
\begin{equation}
\beta_1+\beta_2<1
\quad\text{and}\quad
\beta_2<h\bigl(2-\beta_1-\beta_2\bigr),
\label{eq:centered-scalar-criterion}
\end{equation}
then there exists a spherical $t$-design on $\Sph$ consisting of exactly $N$ points.
\end{theorem}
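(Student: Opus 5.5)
We apply the degree criterion of Section~\ref{sec:degree-criterion}. For each $P\in\overline\Omega$ we build continuous maps $\bm y_i(P)$ that start at the Karcher centers $\bm c_i$ and move along a common regularized gradient flow, following \cite{BRV2013}. We fix a small $\varepsilon>0$ and a parameter $\lambda>0$ to be chosen, and let every point flow for time $\lambda$ along
\[
\frac{\dd}{\dd s}\bm y_i(s)=\frac{\grad P(\bm y_i(s))}{\max\{\|\grad P(\bm y_i(s))\|_{\ell_2},\varepsilon\}}.
\]
The vector field is Lipschitz for fixed $P$, so solutions exist, are unique, and depend continuously on $P$. Distinct starting points stay distinct for all time by uniqueness. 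The Karcher centers are distinct because they lie in the interiors of disjoint cells, which follows from Lemma~\ref{lem:karcher-localization}(i) together with a convexity argument. We then set $\bm y_i(P)=\bm y_i(\lambda)$ and aim to verify \eqref{xueq90} on $\partial\Omega$, that is, for $\|P\|=1$.

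\textbf{Step 1 (initial loss).} By Lemma~\ref{lem:centered-value-closure}, and since $P$ has zero mean,
\[
\frac1N\sum_i P(\bm c_i)\ge-\frac{2\beta_2}{\pi t}\mathcal R_{\max}.
\]
Lemma~\ref{lem:centered-sampling-constant} gives $\mathcal R_{\max}\le1/(1-\beta_2)$.

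\textbf{Step 2 (gain along the flow).} Write $A(s)=\frac1N\sum_i\|\grad P(\bm y_i(s))\|_{\ell_2}$. Then
\[
\frac{\dd}{\dd s}\frac1N\sum_iP(\bm y_i(s))\ge A(s)-\varepsilon .
\]
To bound $A(s)$ from below we combine two facts. At $s=0$, Lemma~\ref{lem:centered-gradient-closure} gives $A(0)\ge1-\mathcal R_{\max}\beta_1\ge1-\beta_1/(1-\beta_2)$. For $s>0$, Lemma~\ref{lem:spherical-gradient-variation-curve} with unit speed gives
\[
|A'(s)|\le\frac1N\sum_i\|\nabla^2P(\bm y_i(s))\|_{\op}.
\]
We control the right-hand side with Lemma~\ref{lem:hessian-sampling-transfer}. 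This needs a gradient sampling constant $C(s)$ for the moved configuration, and such a constant can itself be propagated. Applying the Hessian-transfer bound to each $\Rot_{\bm u}Q$ via \eqref{eq:axis-average-main} and \eqref{eq:rot-average-main} yields the Riccati-type differential inequality
\[
C'(s)\le\frac{\pi t}{2}C(s)^2,\qquad C(0)\le\frac1{1-\beta_2}.
\]
Hence $C(s)\le\big(1-\beta_2-\tfrac{\pi t}{2}s\big)^{-1}$ as long as the denominator is positive. Consequently
\[
A(s)\ge A(0)-\frac{\pi t}{2}\int_0^sC(r)\,\dd r .
\]
Define the rescaled time $x=1-\beta_2-\tfrac{\pi t}{2}s$ and integrate twice. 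The total gain up to the optimal stopping time, where the lower bound for $A$ hits zero, comes out as $\frac{2}{\pi t}$ times an expression of the form $x\ln x+1-x$. The natural choice is the endpoint $x=2-\beta_1-\beta_2$, up to the normalization by $(1-\beta_2)$. I expect the bookkeeping to give exactly a total gain of $\frac{2}{\pi t}\,\frac{h(2-\beta_1-\beta_2)}{1-\beta_2}$. The condition $\beta_1+\beta_2<1$ makes $A(0)>0$ and keeps $x$ in the admissible range.

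\textbf{Step 3 (conclusion).} We compare the two estimates. The quantity $\frac1N\sum_iP(\bm y_i(P))$ is at least $\frac{2}{\pi t(1-\beta_2)}\big(h(2-\beta_1-\beta_2)-\beta_2\big)-\lambda\varepsilon$. This is positive by \eqref{eq:centered-scalar-criterion} once $\varepsilon$ is small. Theorem~\ref{thm:brouwer-degree-criterion} then finishes the proof.

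\textbf{Main obstacle.} The main obstacle is Step 2: making the differential inequality for the moving sampling constant rigorous, since a supremum over $Q$ is not obviously differentiable, and getting constants that match $h$ exactly. I would first prove an integral form, $C(s)\le C(0)+\frac{\pi t}{2}\int_0^sC(r)^2\,\dd r$, uniformly in $Q$ using Lemma~\ref{lem:spherical-gradient-variation-curve}, and then apply a comparison argument. If the constants do not reproduce $h$ exactly, I would tune the stopping time $\lambda$ until they do.
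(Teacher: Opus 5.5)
Your overall architecture matches the paper's: Karcher centers, a common unit-speed regularized flow, the initial loss from Lemma~\ref{lem:centered-value-closure} and Lemma~\ref{lem:centered-sampling-constant}, the gain from Lemma~\ref{lem:centered-gradient-closure} plus Hessian transfer, and then Theorem~\ref{thm:brouwer-degree-criterion}. However, Step~2 has a genuine gap, and it is exactly where $h$ should come from. Lemma~\ref{lem:spherical-gradient-variation-curve} (speed at most $1$) and Lemma~\ref{lem:hessian-sampling-transfer} (applied with the current constant $C(r)$) give, for $\|Q\|=1$, the bound $\mathcal A_Q(s)\le\mathcal A_Q(0)+\frac{\pi t}{2}\int_0^sC(r)\,\dd r$. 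Hence
\[
C(s)\le\frac{1}{1-\beta_2}+\frac{\pi t}{2}\int_0^sC(r)\,\dd r ,
\]
which is linear in $C$. Only one factor of $C$ enters, because the Hessian transfer already handles the rotational derivatives. Linear Gronwall then gives $C(s)\le e^{\pi ts/2}/(1-\beta_2)$. Your Riccati inequality is at best a lossy weakening of this (it holds only because $C\ge1$). Its solution $(1-\beta_2-\frac{\pi t}{2}s)^{-1}$ is strictly larger than $e^{\pi ts/2}/(1-\beta_2)$ for every $s>0$, since $1-u/b\le e^{-u/b}\le e^{-u}$ when $b=1-\beta_2\le1$. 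So your lower bound for $A(s)$ is pointwise strictly worse, and no choice of $\lambda$ can recover the stated criterion.

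Concretely, set $a=\frac{1-\beta_1-\beta_2}{1-\beta_2}$. Your integration with the optimal stopping time gives gain $\frac{2}{\pi t}(1-\beta_2)(a-1+e^{-a})$, so positivity would need $(1-\beta_2)^2(a-1+e^{-a})>\beta_2$. This is strictly stronger than the hypothesis. For example, $\beta_1=0.395$, $\beta_2=0.105$ satisfy \eqref{eq:centered-scalar-criterion}, since $h(1.5)\approx0.1082$. Yet $(1-\beta_2)^2(a-1+e^{-a})\approx0.1046<0.105$, so your argument fails there.

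Your bookkeeping is also internally inconsistent. The rescaled time $x=1-\beta_2-\frac{\pi t}{2}s$ decreases from $1-\beta_2<1$, so it never equals $2-\beta_1-\beta_2>1$. In the paper that value arises from the exponential bound: $A(s)\ge\frac{1-\beta_1-\beta_2}{1-\beta_2}-\frac{e^{\pi ts/2}-1}{1-\beta_2}=\frac{2-\beta_1-\beta_2-e^{\pi ts/2}}{1-\beta_2}$. This bound vanishes at $s_*=\frac{2}{\pi t}\ln(2-\beta_1-\beta_2)$, and its integral up to $s_*$ is exactly $\frac{2h(2-\beta_1-\beta_2)}{\pi t(1-\beta_2)}$. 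With this replacement the rest of your argument goes through as in the paper.

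As a minor aside, Lemma~\ref{lem:karcher-localization}(i) does not by itself place $\bm c_i$ in the interior of $C_i$, so your justification that the starting points are distinct needs more than you state. The paper's proof does not address this point either.
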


\begin{proof}  
Let 
\begin{equation}\label{xueq47}
s_*=\frac{2\ln  (2-\beta_1-\beta_2)}{\pi t}
\quad\text{and}\quad
\varepsilon=\frac{1}{s_*}\cdot\frac{h(2-\beta_1-\beta_2)-\beta_2}{\pi t(1-\beta_2)}.
\end{equation}
Since $\beta_1+\beta_2<1$ and $\beta_2<h\bigl(2-\beta_1-\beta_2\bigr)$, both $s_*$ and $\varepsilon$ are positive.

For $P\in\Pi_t^0$, define
\[
\bm V_{P}(\bm x)
=\frac{\grad P(\bm x)}
{\sqrt{\|\grad P(\bm x)\|_{\ell_2}^2+\varepsilon^2}}.
\]
For each $i\in\{1,2,\ldots,N\}$, let $\bm{y}_i(P,s)$ be the solution of
\[
\frac{\dd}{\dd s}\bm{y}_i(P,s)
=\bm V_{P}\bigl(\bm{y}_i(P,s)\bigr),
\qquad
\bm{y}_i(P,0)=\bm{c}_i.
\]
Let 
\begin{equation*}
\Omega=\{P\in\Pi_t^0:\|P\|<1\}.
\end{equation*}
The space $\Pi_t^0$ is finite dimensional and $\|\cdot\|$ is a norm on it.  Hence $\Omega$ is a bounded open set in the Hilbert coordinates of Section~\ref{sec:degree-criterion}, and $\partial\Omega$ is compact. Moreover, because $\varepsilon>0$, the smooth vector field $\bm V_{P}$ and its flow depend continuously on $P$. We claim that
\begin{equation}
\frac1N\sum_{i=1}^N
P\bigl(\bm{y}_i(P,s_*)\bigr)>0,
\quad\forall P\in \partial\Omega=\{P\in\Pi_t^0:\|P\|=1\}.
\label{eq:terminal-boundary-positivity}
\end{equation}
Applying Theorem \ref{thm:brouwer-degree-criterion} to
\[
P\longmapsto
\sum_{i=1}^NG_{\bm{y}_i(P,s_*)}
\]
gives $P_0\in\Omega$ such that
\[
\sum_{i=1}^NG_{\bm{y}_i(P_0,s_*)}=0.
\]
By the characterization of spherical designs in Section \ref{sec:degree-criterion}, the points
\[
\bm{y}_1(P_0,s_*),\ldots,\bm{y}_N(P_0,s_*)
\]
form a spherical $t$-design.

It remains to prove equation~\eqref{eq:terminal-boundary-positivity}. Fix a polynomial $P\in \partial\Omega$. Using the chain rule, we have
\begin{equation}\label{xueq13}
\begin{aligned}
\frac{\dd}{\dd s}P(\bm y_i(P,s))
&
=\frac{\|\grad P(\bm y_i(P,s))\|_{\ell_2}^2}
{\sqrt{\|\grad P(\bm y_i(P,s))\|_{\ell_2}^2+\varepsilon^2}}  \ge \|\grad P(\bm y_i(P,s))\|_{\ell_2}-\varepsilon.
\end{aligned}
\end{equation}
By the fundamental theorem of calculus, we have
\begin{equation}
\begin{aligned}
\frac1N\sum_{i=1}^NP(\bm{y}_i(P,s_*))
&\overset{(a)}=\frac1N\sum_{i=1}^NP(\bm{c}_i)+\int_0^{s_*}\frac{\dd}{\dd s} \frac1N\sum_{i=1}^NP(\bm{y}_i(P,s)) \dd s\\
&\overset{(b)}\geq I_1
+I_2  -s_*\cdot \varepsilon,
\end{aligned}
\label{eq:centered-flow-preliminary}
\end{equation}
where
\begin{equation*}
I_1:= \frac1N\sum_{i=1}^NP(\bm c_i)
\quad\text{and}\quad
I_2:=	\int_0^{s_*}  \frac1N\sum_{i=1}^N\|\grad P(\bm{y}_i(P,s))\|_{\ell_2} \dd s.	
\end{equation*}
Here, ($a$) follows from $\bm{y}_i(P,0)=\bm c_i$, and inequality ($b$) follows from equation~\eqref{xueq13}. We claim that
\begin{subequations}
\begin{align}
I_1&
\geq -\frac{2\beta_2}{\pi t(1-\beta_2)},\label{xueq45}\\
I_2&\geq \frac{2h(2-\beta_1-\beta_2)}
{\pi t(1-\beta_2)}\label{xueq46},
\end{align}	
\end{subequations}
where $h(x)=x\ln  x+1-x$. Substituting equation~\eqref{xueq45} and equation~\eqref{xueq46} into equation~\eqref{eq:centered-flow-preliminary}, we obtain
\begin{equation*}
\frac1N\sum_{i=1}^NP(\bm{y}_i(P,s_*))
\geq 
\frac{2h(2-\beta_1-\beta_2)-2\beta_2}{\pi t(1-\beta_2)}-s_*\cdot \varepsilon
=s_*\varepsilon>0.
\end{equation*}
Here, we use equation~\eqref{xueq47}. This proves equation~\eqref{eq:terminal-boundary-positivity}.

It remains to prove equations~\eqref{xueq45} and~\eqref{xueq46}. We first prove equation~\eqref{xueq45}. Since $P\in\Pi_t^0$ and $\|P\|=1$, using  equation~\eqref{eq:centered-value-closure} and equation~\eqref{eq:centered-sampling-constant} we obtain
\begin{equation*}
\begin{aligned}
\left|\frac1N\sum_{i=1}^NP(\bm c_i)\right|
&=\left|\frac1N\sum_{i=1}^N
\left(P(\bm c_i)-N\int_{C_i}P\dd\sigma\right)\right|\\
&\le
\frac1N\sum_{i=1}^N\left|P(\bm c_i)-N\int_{C_i}P\dd\sigma\right|
\le\frac{2\beta_2}{\pi t(1-\beta_2)}.
\end{aligned}
\end{equation*}
Here the equality uses
\[
\sum_{i=1}^N\int_{C_i}P\dd\sigma
=\int_{\Sph}P\dd\sigma=0.
\]
Hence,
\begin{equation}\label{xueq18}
I_1= \frac1N\sum_{i=1}^NP(\bm c_i)\geq -\left|\frac1N\sum_{i=1}^NP(\bm c_i)\right|
\geq -\frac{2\beta_2}{\pi t(1-\beta_2)}.	
\end{equation}
This gives equation~\eqref{xueq45}.

We next prove equation~\eqref{xueq46}. For any $H\in\Pi_t^0$, we define
\begin{equation}\label{xueq23}
\mathcal A_H(s)=\frac1N\sum_{i=1}^N\|\grad H(\bm{y}_i(P,s))\|_{\ell_2}
\quad\text{and}\quad
\mathcal{R}_{\mathrm{max}}(s):=
\sup_{ H\in\Pi_t^0,\|H\|=1}\mathcal A_H(s).
\end{equation}
This supremum is finite and continuous in $s$, because the expression being maximized is jointly continuous in $(H,s)$, and the unit sphere $\{H\in\Pi_t^0:\|H\|=1\}$ is compact. Since $\bm{y}_i(P,0)=\bm c_i$, the quantity $\mathcal{R}_{\mathrm{max}}(0)$ equals $\mathcal{R}_{\mathrm{max}}$ defined in equation~\eqref{xueq12}. By equation~\eqref{eq:centered-sampling-constant} and equation~\eqref{eq:centered-closure}, we have
\begin{equation}
\begin{aligned}
\mathcal{R}_{\mathrm{max}}(0)=\mathcal{R}_{\mathrm{max}}\le\frac1{1-\beta_2}
\end{aligned}
\label{eq:centered-static-input}
\end{equation}
and
\begin{equation}
\begin{aligned}
\mathcal A_H(0)
\ge\bigl(1-\beta_1\mathcal{R}_{\mathrm{max}}\bigr)\|H\|
\ge\frac{1-\beta_1-\beta_2}{1-\beta_2}\|H\|,
\quad \forall H\in\Pi_t^0.
\end{aligned}
\label{eq:centered-static-input2}
\end{equation}
For fixed $i$, since the vector field $\bm V_P$ is smooth, the trajectory $r\mapsto \bm{y}_i(P,r)$ is a smooth curve. Lemma~\ref{lem:spherical-gradient-variation-curve} gives
\begin{equation}\label{xueq19}
\begin{aligned}
\bigg|\|\grad H(\bm{y}_i(P,s))\|_{\ell_2}
-\|\grad H(\bm{y}_i(P,0))\|_{\ell_2}\bigg|
\le&
\int_0^s\|\nabla^2 H(\bm{y}_i(P,r))\|_{\op}
\bigg\|\frac{\dd}{\dd r}\bm{y}_i(P,r)\bigg\|_{\ell_2}\dd r \\
\le&
\int_0^s\|\nabla^2 H(\bm{y}_i(P,r))\|_{\op}\dd r,
\end{aligned}
\end{equation}
where the last inequality follows from $\|\frac{\dd}{\dd r}\bm{y}_i(P,r)\|_{\ell_2}\le1$.   By equation~\eqref{eq:hessian-sampling-transfer} and the definition of $\mathcal{R}_{\mathrm{max}}(r)$,  we have
\begin{equation}\label{xueq20}
\frac1N\sum_{i=1}^N\|\nabla^2 H(\bm{y}_i(P,r))\|_{\op}
\le\frac{\pi t}{2}\mathcal{R}_{\mathrm{max}}(r)\|H\|.
\end{equation}
Hence, we have
\begin{equation} 
\begin{aligned}
|\mathcal A_H(s)-\mathcal A_H(0)|
&\leq \frac1N\sum_{i=1}^N\bigg|\|\grad H(\bm{y}_i(P,s))\|_{\ell_2}-\|\grad H(\bm{y}_i(P,0))\|_{\ell_2} \bigg|\\
&\overset{(a)}\leq \frac1N\sum_{i=1}^N \int_0^s\|\nabla^2 H(\bm{y}_i(P,r))\|_{\op}\dd r \overset{(b)}\le\frac{\pi t}{2}\int_0^s\mathcal{R}_{\mathrm{max}}(r)\dd r\,\|H\|,
\label{eq:centered-dynamic-integral}	
\end{aligned}
\end{equation}
where ($a$) follows from equation~\eqref{xueq19} and ($b$) follows from equation~\eqref{xueq20}. Then we have
\begin{equation*}
\begin{aligned}
\mathcal A_H(s)
&\le\mathcal A_H(0)+\frac{\pi t}{2}\int_0^s\mathcal{R}_{\mathrm{max}}(r)\dd r\,\|H\|	\\
&\overset{(a)}\leq \mathcal{R}_{\mathrm{max}}(0)\|H\|+\frac{\pi t}{2}\int_0^s\mathcal{R}_{\mathrm{max}}(r)\dd r\,\|H\|\\
&\overset{(b)}\leq \frac1{1-\beta_2}\|H\|+\frac{\pi t}{2}\int_0^s\mathcal{R}_{\mathrm{max}}(r)\dd r\,\|H\|.
\end{aligned}
\end{equation*}
Here, ($a$) follows from the definition of $\mathcal{R}_{\mathrm{max}}(0)$, and ($b$) follows from equation~\eqref{eq:centered-static-input}. Taking the supremum over $H\in\Pi_t^0$ with $\|H\|=1$, we obtain
\begin{equation*}
\mathcal{R}_{\mathrm{max}}(s)\le 	\frac1{1-\beta_2} +\frac{\pi t}{2}\int_0^s\mathcal{R}_{\mathrm{max}}(r)\dd r.	
\end{equation*}
Gronwall's inequality shows that if $a\ge 0$, $b\ge 0$, and  $f:[0,u]\to[0,\infty)$ is a continuous function satisfying
\[
f(s)\le a+b\int_0^s f(r)\,dr,\quad \forall s\in[0,u],
\]
then $f(s)\leq a e^{bs}$ for every $s\in[0,u]$ \cite{Bellman1953,Gronwall1919}. Hence, applying Gronwall's inequality we obtain
\[
\mathcal{R}_{\mathrm{max}}(s)\le\frac{e^{\pi ts/2}}{1-\beta_2}.
\]
Then for any $H\in\Pi_t^0$,
\begin{equation}
\begin{aligned}
\mathcal A_H(s)
&\overset{(a)}\ge \mathcal A_H(0)-\frac{\pi t}{2}\int_0^s\mathcal{R}_{\mathrm{max}}(r)\dd r\,\|H\|\\
&\overset{(b)}\ge  \frac{1-\beta_1-\beta_2}{1-\beta_2}\|H\|-\frac{\pi t}{2}\int_0^s\frac{e^{\pi tr/2}}{1-\beta_2}\dd r\,\|H\|\\
&= \frac{2-\beta_1-\beta_2-e^{\pi ts/2}}{1-\beta_2}\|H\|,	
\end{aligned}
\label{eq:centered-dynamic-lower}
\end{equation}
where ($a$) follows from equation~\eqref{eq:centered-dynamic-integral} and ($b$) follows from equation~\eqref{eq:centered-static-input2}. Taking $H=P$ in equation~\eqref{eq:centered-dynamic-lower} and integrating gives
\begin{equation}\label{xueq21}
I_2=\int_0^{s_*}  \mathcal A_P(s) \dd s\geq \int_0^{s_*}  \frac{2-\beta_1-\beta_2-e^{\pi ts/2}}{1-\beta_2} \dd s
=\frac{2h(2-\beta_1-\beta_2)}
{\pi t(1-\beta_2)}.
\end{equation}
We arrive at equation~\eqref{xueq46}. This completes the proof.

\end{proof}

Now we give a proof of Theorem~\ref{thm:coarse-main}.

\begin{proof}[Proof of Theorem~\ref{thm:coarse-main}]
If $t=1$, take $N/2=3q^2$ pairwise disjoint antipodal pairs on $\Sph$. Their vector sum is zero.  Since every polynomial of degree at most one is a constant plus a linear function, these points form a spherical $1$-design.

Now let $t\ge2$, let $q\ge3t$, and set $N=6q^2$. By Remark~\ref{rem:cell-karcher-center}, the cells have unique Karcher centers satisfying the first-variation identity equation~\eqref{eq:karcher-first-variation}.  Moreover, Lemma~\ref{lem:karcher-localization} gives
\[
\begin{aligned}
C_i
\subset B\left(\bm c_i,\frac{15}{\sqrt{8N}}\right)
\end{aligned}
\]
and
\[
\int_{C_i}\dist(\bm c_i,\bm y)^2\dd\sigma(\bm y)
<\frac4{9q^2N}.
\]
Since $\sigma(C_i)=1/N$, the Cauchy-Schwarz inequality and the second-moment estimate give
\[
\begin{aligned}
\int_{C_i}\dist(\bm c_i,\bm y)\dd\sigma(\bm y)
&\le
\left(
\int_{C_i}\dist(\bm c_i,\bm y)^2\dd\sigma(\bm y)
\right)^{1/2}
\left(\int_{C_i}1\dd\sigma(\bm y)\right)^{1/2}<\frac2{3qN}.
\end{aligned}
\]
Moreover, the support estimate implies, for every integer $m\ge2$,
\[
\begin{aligned}
\int_{C_i}\dist(\bm c_i,\bm y)^m\dd\sigma(\bm y)
&\le
\left(\frac{5\sqrt3}{4q}\right)^{m-2}
\int_{C_i}\dist(\bm c_i,\bm y)^2\dd\sigma(\bm y)<
\frac4{9q^2N}
\left(\frac{5\sqrt3}{4q}\right)^{m-2}.
\end{aligned}
\]
These estimates hold for every $1\leq i\leq N$.   Hence, by the definition of $\mu_j$ in \eqref{eq:centered-tails}, we have
\[
\mu_1<\frac2{3q},
\qquad
\mu_m<\frac4{9q^2}
\left(\frac{5\sqrt3}{4q}\right)^{m-2},
\quad\forall m\ge2.
\]
Since $\frac{t}{q}\leq \frac{1}{3}$, the definitions of the moment tails therefore give
\begin{equation*}
\begin{aligned}
\beta_2&<\frac4{9q^2}\left(\frac{4q}{5\sqrt3}\right)^2
\sum_{m=2}^{\infty}\frac1{m!}
\left(\frac{5\sqrt3\pi t}{8q}\right)^m
\leq \frac4{9q^2}\left(\frac{4q}{5\sqrt3}\right)^2
\sum_{m=2}^{\infty}\frac1{m!}
\left(\frac{5\sqrt3\pi }{24}\right)^m\\
&= \frac{64}{675}
\left(e^{\frac{5\sqrt3\pi}{24}}-1-\frac{5\sqrt3\pi}{24}\right)<\frac{13}{140}.
\end{aligned}
\end{equation*}
Since $\beta_1=\frac{\pi t}{2}\mu_1+\beta_2$, $\mu_1<\frac{2}{3q}$ and $\frac{t}{q}\leq \frac{1}{3}$,  we further have
\[
\beta_1+\beta_2=\frac{\pi t}{2}\mu_1+2\beta_2<\frac{\pi t}{3q}+2\beta_2\leq\frac\pi9+2\beta_2<\frac\pi9+\frac{13}{70}<\frac7{13}<1.
\]
The function $h(x)=x\ln x+1-x$ is increasing for $x>1$, and $2-\beta_1-\beta_2>\frac{19}{13}$.   Therefore,
\[
\begin{aligned}
h(2-\beta_1-\beta_2)
&>h\left(\frac{19}{13}\right)>\frac{13}{140}>\beta_2.
\end{aligned}
\]
Applying Theorem~\ref{thm:karcher-tail} gives a spherical $t$-design consisting of exactly $N=6q^2$ points.

In particular, taking $q=3t$, we obtain a spherical $t$-design in $\Sph$ of size $N=6q^2=54t^2$.
\end{proof}

\section*{Use of Artificial Intelligence}
During the preparation of this manuscript, the authors used ChatGPT  for assistance with filling in technical details,  and improving the language. All mathematical ideas, proof strategies, and proofs presented in this manuscript are human-generated and were developed and verified by the authors.  The authors take full responsibility for the correctness and originality of the results.

\appendix

\section{Proof of Lemma~\ref{lem:rotational-estimates}}
\label{pf-lem:rotational-estimates}

This appendix proves the rotational identities and estimates used in Section~\ref{sec:sample}.  Rotation invariance gives the scalar averaging identities, the one-dimensional Bernstein inequality controls their integrals, and a coordinate calculation transfers the gradient bounds to the spherical Hessian.

\begin{proof}[Proof of Lemma~\ref{lem:rotational-estimates}]

\par\smallskip
\noindent \textup{(i)} 
The coordinate formula in equation~\eqref{eq:rotational-derivative-coordinate} proves the degree part directly. Indeed, if $\nabla_{\R^3}P(\bm x)=(g_1,g_2,g_3)^T$, then $g_1,g_2,g_3\in\Pi_{t-1}$, and every coefficient $u_2x_3-u_3x_2$, $u_3x_1-u_1x_3$, $u_1x_2-u_2x_1$ is linear in $\bm x$. Hence $\Rot_{\bm u}P\in\Pi_t$.

The zero-mean conclusion uses the definition equation~\eqref{eq:rotational-derivative}.  Since $\sigma$ is invariant under rotations, and since differentiation under the integral is justified by smoothness on the compact sphere,
\[
\int_{\Sph}\Rot_{\bm u}P(\bm x)\dd\sigma(\bm x)
=\left.\frac{\dd}{\dd s}\right|_{s=0}
\int_{\Sph}P(\mathcal R_{\bm u}(s)\bm x)\dd\sigma(\bm x)
=\left.\frac{\dd}{\dd s}\right|_{s=0}
\int_{\Sph}P(\bm x)\dd\sigma(\bm x)
=0.
\]
Therefore $\Rot_{\bm u}P\in\Pi_t^0$.

\par\smallskip
\noindent \textup{(ii)}
Since $\sigma$ is the normalized measure on $\Sph$, a direct calculation gives
\begin{equation}
\int_{\Sph}|\langle\bm u,\bm z\rangle|\dd\sigma(\bm u)
=\frac12\|\bm z\|_{\ell_2}
\qquad\forall \bm z\in\R^3.
\label{eq:three-dimensional-mean-projection}
\end{equation}
Applying equation~\eqref{eq:three-dimensional-mean-projection} with $\bm z=\bm x\times\grad P(\bm x)$ gives
\begin{equation}\label{eq:three-dimensional-mean-projection2}
\int_{\Sph}|\langle\bm u,\bm x\times\grad P(\bm x)\rangle|\dd\sigma(\bm u)
=\frac12\|\bm x\times\grad P(\bm x) \|_{\ell_2}.
\end{equation}
Since $\grad P(\bm x)\in T_{\bm x}\Sph$,  by equation~\eqref{eq:rotational-derivative-coordinate} we have $\|\bm x\times\grad P(\bm x)\|_{\ell_2} =\|\grad P(\bm x)\|_{\ell_2}$ and
\[
\Rot_{\bm u}P(\bm x)
=\left\langle\grad P(\bm x),\bm u\times\bm x\right\rangle
=\left\langle\bm u,\bm x\times\grad P(\bm x)\right\rangle.
\]
Then, equation~\eqref{eq:three-dimensional-mean-projection2} becomes
\[
\int_{\Sph}|\Rot_{\bm u}P(\bm x)|\dd\sigma(\bm u)
=\frac12\|\grad P(\bm x)\|_{\ell_2},
\]
which is equation~\eqref{eq:axis-average-main}.

\par\smallskip
\noindent \textup{(iii)}
We use Zygmund's integral Bernstein inequality, recorded in Arestov~\cite[equation~(1.2), p.~1]{Arestov1982}.  If $f$ is a trigonometric polynomial of degree at most $t$, then that inequality with $\varphi(u)=u$ and derivative order one gives
\[
\int_0^{2\pi}|f'(\theta)|\dd\theta
\le t\int_0^{2\pi}|f(\theta)|\dd\theta.
\]

Let $\eta$ be normalized Haar measure on $\mathrm{SO}(3)$ (see~\cite[Chapter~2]{Folland2016}).  Let
\[
\bm a_\theta=(\cos\theta,\sin\theta,0),
\qquad
\bm b_\theta=(-\sin\theta,\cos\theta,0).
\]
For $\bm G\in\mathrm{SO}(3)$, the curve $\gamma_{\bm G}(\theta)=\bm G\bm a_\theta$ is a unit-speed great circle with unit tangent vector $\bm G\bm b_\theta$.  If $Q\in\Pi_t$, then $f_{\bm G}(\theta)=Q(\bm G\bm a_\theta)$ is a trigonometric polynomial of degree at most $t$, and
\[
f_{\bm G}'(\theta)
=\left\langle\grad Q(\bm G\bm a_\theta),\bm G\bm b_\theta\right\rangle.
\]
Applying the preceding one-dimensional Bernstein inequality to $f_{\bm G}$ gives
\begin{equation}
\int_0^{2\pi}
\left|\left\langle
\grad Q(\bm G\bm a_\theta),\bm G\bm b_\theta
\right\rangle\right|\dd\theta
\le t\int_0^{2\pi}|Q(\bm G\bm a_\theta)|\dd\theta.
\label{eq:great-circle-Bernstein}
\end{equation}
Averaging equation~\eqref{eq:great-circle-Bernstein} over $\mathrm{SO}(3)$ therefore yields
\begin{equation}
\label{xueq29}
\begin{aligned}
\int_{\mathrm{SO}(3)}\int_0^{2\pi}
|\langle \grad Q(\bm G\bm a_\theta),\bm G\bm b_\theta\rangle|
\dd\theta\dd\eta(\bm G)
\leq t \int_{\mathrm{SO}(3)}\int_0^{2\pi}
|Q(\bm G\bm a_\theta)|\dd\theta\dd\eta(\bm G).	
\end{aligned}	
\end{equation}
For each fixed $\theta$, the push-forward of $\eta$ under
\[
\bm G\longmapsto(\bm G\bm a_\theta,\bm G\bm b_\theta)
\]
is the rotation-invariant probability measure on the unit tangent bundle of $\Sph$.  Thus the first component has distribution $\sigma$ and, conditional on a point $\bm x$, the second component is uniformly distributed on
\[
S^1_{\bm x}=\{\bm v\in T_{\bm x}\Sph:\|\bm v\|_{\ell_2}=1\}.
\]
If $\nu_{\bm x}$ denotes normalized angular measure on this unit circle, then, for $\bm w\in T_{\bm x}\Sph$,
\begin{equation}
\int_{S^1_{\bm x}}|\langle\bm w,\bm v\rangle|
\dd\nu_{\bm x}(\bm v)
=\frac{\|\bm w\|_{\ell_2}}{2\pi}\int_0^{2\pi}|\cos\varphi|\dd\varphi
=\frac2\pi\|\bm w\|_{\ell_2}.
\label{eq:tangent-mean-projection}
\end{equation}
Hence, for fixed $\theta\in[0,2\pi]$, we have
\begin{equation*}
\begin{aligned}
\int_{\mathrm{SO}(3)}
|\langle \grad Q(\bm G\bm a_\theta),\bm G\bm b_\theta\rangle|
\dd\eta(\bm G)
&=\int_{\Sph}\int_{S^1_{\bm x}}
|\langle \grad Q(\bm x), \bm v\rangle|
\dd\nu_{\bm x}(\bm v)\dd\sigma(\bm x)\\
&=
\frac2\pi
\int_{\mathbb S^2}\|\grad Q(\bm x)\|_{\ell_2}\dd\sigma(\bm x),
\end{aligned}
\end{equation*}
where the last equality follows from equation~\eqref{eq:tangent-mean-projection}. Then Fubini's theorem gives
\begin{equation}
\label{xueq28}
\int_{\mathrm{SO}(3)}\int_0^{2\pi}
|\langle \grad Q(\bm G\bm a_\theta),\bm G\bm b_\theta\rangle|
\dd\theta\dd\eta(\bm G)
=\int_{0}^{2\pi}
\frac2\pi
\int_{\mathbb S^2}\|\grad Q\|_{\ell_2}\dd\sigma\dd\theta
=
4\int_{\mathbb S^2}\|\grad Q\|_{\ell_2}\dd\sigma.
\end{equation}
Similarly, we have
\begin{equation}
\label{xueq30}
\int_{\mathrm{SO}(3)}\int_0^{2\pi}
|Q(\bm G\bm a_\theta)|\dd\theta\dd\eta(\bm G)
=\int_{0}^{2\pi} \int_{\Sph}|Q(\bm x)|\dd\sigma(\bm x)\dd\theta
=2\pi\int_{\Sph}|Q(\bm x)|\dd\sigma(\bm x).
\end{equation}
Substituting equations~\eqref{xueq28} and~\eqref{xueq30} into equation~\eqref{xueq29}, we arrive at
\[
\int_{\Sph}\|\grad Q(\bm x)\|_{\ell_2}\dd\sigma(\bm x)
\le\frac{\pi t}{2}\int_{\Sph}|Q(\bm x)|\dd\sigma(\bm x),
\]
which proves equation~\eqref{eq:spherical-L1-Bernstein}. 

\par\smallskip
\noindent \textup{(iv)} By  \textup{(i)}, $\Rot_{\bm u}P\in\Pi_t$.  Hence
\textup{(iii)} gives, for each fixed $\bm u$,
\[
\|\Rot_{\bm u}P\|
=\int_{\Sph}\|\grad\Rot_{\bm u}P(\bm x)\|_{\ell_2}\dd\sigma(\bm x)
\le\frac{\pi t}{2}
\int_{\Sph}|\Rot_{\bm u}P(\bm x)|\dd\sigma(\bm x).
\]
Integrating the preceding inequality with respect to $\bm u$, we obtain
\[
\begin{aligned}
\int_{\Sph}\|\Rot_{\bm u}P\|\dd\sigma(\bm u)
&\le\frac{\pi t}{2}\int_{\Sph}\int_{\Sph}
|\Rot_{\bm u}P(\bm x)|\dd\sigma(\bm x)\dd\sigma(\bm u)=\frac{\pi t}{4}\|P\|,
\end{aligned}
\]
where the last equality follows from equation~\eqref{eq:axis-average-main}. We arrive at equation~\eqref{eq:rot-average-main}.

\par\smallskip
\noindent \textup{(v)} By rotation invariance, it is
enough to prove the estimate at $\bm e_3=(0,0,1)$.  Indeed, if $\bm R\in\mathrm{SO}(3)$ and $\widetilde P(\bm y)=P(\bm R\bm y)$, then Hessian norms are preserved under the rotation, while $\Rot_{\bm u}\widetilde P$ is transformed into $\Rot_{\bm R\bm u}P$. The measure $d\sigma(\bm u)$ is also rotation invariant.

Write
\[
p_j=\frac{\partial P}{\partial x_j}(\bm e_3),
\qquad
p_{ij}=\frac{\partial^2 P}{\partial x_i\partial x_j}(\bm e_3).
\]
In the tangent basis $\bm e_1=(1,0,0),\bm e_2=(0,1,0)$ of $T_{\bm e_3}\Sph$, the spherical Hessian has matrix
\[
\bm B=
\begin{pmatrix}
p_{11}-p_3 & p_{12}\\
p_{12} & p_{22}-p_3
\end{pmatrix}.
\]
This is just the definition of the spherical Hessian, because the tangential projection at $\bm e_3$ keeps only the first two coordinates and $\langle\nabla_{\R^3}P(\bm e_3),\bm e_3\rangle=p_3$.

If $\bm B=0$, the desired estimate is immediate.  Otherwise choose a unit eigenvector $\bm v=a\bm e_1+b\bm e_2\in T_{\bm e_3}\Sph$ such that
\[
\bm B\binom{a}{b}
=\lambda\binom{a}{b},
\qquad
a^2+b^2=1,
\qquad
|\lambda|=\|\bm B\|_{\op}
=\|\nabla^2P(\bm e_3)\|_{\op}.
\]
We now compute the $\bm v$-directional derivative of $\Rot_{\bm u}P$ directly from the coordinate formula equation~\eqref{eq:rotational-derivative-coordinate}.  Let $\gamma$ be any smooth curve on $\Sph$ with $\gamma(0)=\bm e_3$ and $\gamma'(0)=\bm v$.  Since $\gamma_1'(0)=a$, $\gamma_2'(0)=b$, and $\gamma_3'(0)=0$, writing $P_j(\bm y)=\partial P(\bm y)/\partial y_j$ and differentiating
\[
\begin{aligned}
\Rot_{\bm u}P(\bm y)
&=(u_2y_3-u_3y_2)P_1(\bm y)
+(u_3y_1-u_1y_3)P_2(\bm y)  +(u_1y_2-u_2y_1)P_3(\bm y)
\end{aligned}
\]
at $\bm y=\bm e_3$ in the direction $\bm v$ gives
\[
\begin{aligned}
\left\langle\grad\Rot_{\bm u}P(\bm e_3),\bm v\right\rangle
&=
u_1(-p_{12}a-(p_{22}-p_3)b)+
u_2((p_{11}-p_3)a+p_{12}b)
+u_3(ap_2-bp_1).
\end{aligned}
\]
Using the eigenvector equations
\[
(p_{11}-p_3)a+p_{12}b=\lambda a,
\qquad
p_{12}a+(p_{22}-p_3)b=\lambda b,
\]
this becomes
\[
\left\langle\grad\Rot_{\bm u}P(\bm e_3),\bm v\right\rangle
=-\lambda b\,u_1+\lambda a\,u_2+(ap_2-bp_1)u_3.
\]
Define
\[
\bm z=(-\lambda b,\lambda a,ap_2-bp_1)\in\R^3.
\]
Then
\[
\left\langle\grad\Rot_{\bm u}P(\bm e_3),\bm v\right\rangle
=\langle\bm u,\bm z\rangle.
\]
Therefore, for every $\bm u\in\Sph$,
\[
\|\grad\Rot_{\bm u}P(\bm e_3)\|_{\ell_2}
\ge
\left|
\left\langle\grad\Rot_{\bm u}P(\bm e_3),\bm v\right\rangle
\right|
=|\langle\bm u,\bm z\rangle|.
\]
Moreover,
\[
\|\bm z\|_{\ell_2}^2
=\lambda^2b^2+\lambda^2a^2+(ap_2-bp_1)^2
=\lambda^2+(ap_2-bp_1)^2
\ge\lambda^2.
\]
Integrating in $\bm u$ and applying equation~\eqref{eq:three-dimensional-mean-projection} yields
\[
\begin{aligned}
\int_{\Sph}
\|\grad\Rot_{\bm u}P(\bm e_3)\|_{\ell_2}\dd\sigma(\bm u)
&\ge
\int_{\Sph}|\langle\bm u,\bm z\rangle|\dd\sigma(\bm u)=\frac12\|\bm z\|_{\ell_2}
\ge\frac12\|\nabla^2P(\bm e_3)\|_{\op}.
\end{aligned}
\]
Multiplying by $2$ proves equation~\eqref{eq:hessian-axis-average} at $\bm e_3$, and the rotation-invariance reduction proves it at every $\bm x\in\Sph$.

\end{proof}

\section{Proof of Lemma~\ref{lem:karcher-localization}}
\label{app:karcher-localization-proof}

This appendix proves the radius and second-moment bounds in Lemma~\ref{lem:karcher-localization}.  We first bound the metric of the cubed-sphere map and the displacement of a Karcher center.  A weighted variance inequality on the parameter square, followed by a chord-arc comparison, then yields the required second distance moment. The metric information needed below is recorded in the next lemma. At every point $(s,r)$ with $|s|\ne|r|$, let
\[
\bm H:=\bm H(s,r)=\bm J_F(s,r)^T\bm J_F(s,r)=(H_{ij})_{i,j=1}^2,
\]
where $\bm J_F:=\bm J_F(s,r)\in\mathbb{R}^{3\times 2}$ is the Jacobian matrix of the map $F$ defined in equation~\eqref{eq:direct-face-map} at $(s,r)$. The Jacobian and the metric matrix are understood almost everywhere since the formula seams $\{(s,r)\in[-1,1]^2:|s|=|r|\}$ have planar measure zero.

\begin{lemma}
\label{prop:cubed-sphere-metric}
At every smooth interior point $(s,r)\in(-1,1)^2$ with $|s|\ne|r|$, we have
\begin{equation}
\tr\bm H<\frac76
\quad\text{and}\quad
\lambda_{\max}(\bm H)<\frac{27}{32}.
\label{eq:metric-trace-bound}
\end{equation}
Moreover
\begin{equation}
\dist\bigl(F(\bm p),F(\bm q)\bigr)
< \frac{3\sqrt{6}}{8}\,\|\bm p-\bm q\|_{\ell_2}
\qquad\forall \bm p,\bm q\in[-1,1]^2,\ \bm p\ne\bm q.
\label{eq:metric-global-lipschitz}
\end{equation}
\end{lemma}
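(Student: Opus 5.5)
The plan is to derive the eigenvalue bound from the trace bound using the equal-area property, obtain the Lipschitz estimate \eqref{eq:metric-global-lipschitz} by integrating along segments, and put the real work into the trace bound.

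\emph{Reducing the eigenvalue bound to the trace bound.} The map $F$ is equal-area. The face $\mathcal T_1$ has unnormalized area $4\pi/6=2\pi/3$, and the parameter square has area $4$. Hence $\sqrt{\det\bm H}=\pi/6$, that is, $\det\bm H=\pi^2/36$, at every smooth point. Since $\bm H$ is a positive semidefinite $2\times2$ matrix,
\[
\lambda_{\max}(\bm H)=\tfrac12\Bigl(\tr\bm H+\sqrt{(\tr\bm H)^2-4\det\bm H}\Bigr),
\]
and this expression is increasing in $\tr\bm H$. Substituting $\tr\bm H<7/6$ and $4\det\bm H=\pi^2/9$ gives
\[
\lambda_{\max}(\bm H)<\tfrac12\Bigl(\tfrac76+\sqrt{\tfrac{49}{36}-\tfrac{\pi^2}{9}}\Bigr)\approx0.8405<\tfrac{27}{32}.
\]
So the eigenvalue bound in \eqref{eq:metric-trace-bound} follows once the trace bound is proved.

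\emph{The Lipschitz estimate.} Note that $\sqrt{27/32}=3\sqrt6/8$. For $\bm p\ne\bm q$, the segment from $\bm p$ to $\bm q$ meets the seam $\{|s|=|r|\}$ and the boundary of the square in finitely many points, or lies inside them. In the generic case, $F$ is continuous and $C^1$ on each open piece of the segment. The length of the image curve is therefore at most $\int\sqrt{\lambda_{\max}}\,\|\bm p-\bm q\|_{\ell_2}$ over the pieces, and the geodesic distance is at most this length. The strict inequality follows because the strict pointwise bound holds on a set of full length. A segment lying inside a seam or on the boundary of the square is handled by approximating it with nearby segments and using the continuity of $F$. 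A limit only gives the non-strict bound $\le\sqrt{27/32}\,\|\bm p-\bm q\|_{\ell_2}$; the strict inequality is then recovered because the $\lambda_{\max}$ bound has slack ($0.8405<0.84375$) and the trace bound extends continuously to the closure of each smooth region.

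\emph{The trace bound.} By the symmetries $(s,r)\mapsto(\pm s,\pm r)$ and $(s,r)\mapsto(r,s)$, together with the corresponding reflections of $\mathcal T_1$, it suffices to treat $0<r<s<1$. I would factor $F$ as $F=\Lambda\circ T$, where $\Lambda$ is the Lambert azimuthal map $(T_1,T_2)\mapsto F$. In polar coordinates $(\rho,\varphi)$ on the plane, $\Lambda$ has radial stretch $(1-\rho^2/4)^{-1/2}$ and angular stretch $(1-\rho^2/4)^{1/2}$. This gives
\[
\tr\bm H=\frac{\rho_s^2+\rho_r^2}{1-\rho^2/4}+\Bigl(1-\frac{\rho^2}{4}\Bigr)\rho^2\bigl(\varphi_s^2+\varphi_r^2\bigr).
\]
Set $\alpha=\pi r/(12s)\in(0,\pi/12)$. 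A direct computation gives $\rho=s\,R(\alpha)$ with
\[
R(\alpha)^2=\frac{\sqrt2\,(3-2\sqrt2\cos\alpha)}{\sqrt2-\cos\alpha},
\]
and the polar angle $\varphi=\Phi(\alpha)$ depends only on $\alpha$. By homogeneity, $\rho_s,\rho_r$ and $s\varphi_s,s\varphi_r$ are explicit functions of $\alpha$ alone, so $\tr\bm H$ becomes an explicit function of the two variables $(s^2,\alpha)$ on $(0,1]\times[0,\pi/12]$.

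I expect the main obstacle to be proving that this two-variable function is below $7/6$ by analytic inequalities alone. My first attempt would be to show that it is monotone in $s^2$ for fixed $\alpha$, using that $(1-\rho^2/4)^{\pm1}$ are monotone, so that the maximum sits at $s=1$. That reduces the problem to a one-variable inequality in $\alpha\in[0,\pi/12]$. For that, I would bound $\cos\alpha$ and $\sin\alpha$ by short Taylor polynomials with remainder and check the resulting rational inequality directly.
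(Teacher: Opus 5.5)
\emph{Where the proposal matches the paper.} Your reduction of the eigenvalue bound to the trace bound through $\det\bm H=\pi^2/36$ matches the paper. So does your handling of the Lipschitz estimate, including recovering strictness from the slack $0.8405<27/32$. Your polar form of $\tr\bm H$ is also correct. It is the paper's $A(\theta)/w+B(\theta)w$, with $w=1-\rho^2/4$, $a(\alpha)=\rho_s^2+\rho_r^2$ and $b(\alpha)=\rho^2(\varphi_s^2+\varphi_r^2)$, where $a$ and $b$ depend only on $\alpha$.

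\emph{The gap: the reduction to $s=1$.} Monotonicity of $(1-\rho^2/4)^{\pm1}$ cannot give monotonicity of $\tr\bm H=a/w+bw$ in $s$. As $s$ grows, the first term increases and the second decreases, so the sign of $\partial_w\tr\bm H=b-a/w^2$ decides. The claimed monotonicity is in fact false near the seam. In the limit $\alpha\to\pi/12$ one finds $a\approx0.503<b\approx0.649$, so $\tr\bm H$ \emph{decreases} as $s$ increases from $0$. Along such a ray:
\begin{itemize}
\item as $s\to0$, $\tr\bm H$ tends to $a+b\approx1.1516$ at the face center;
\item it dips to $2\sqrt{ab}\approx1.142$ near $s\approx0.75$;
\item at the outer edge $s=1$ it is only $\approx1.1493$.
\end{itemize}
So the supremum of $\tr\bm H$ is approached near the origin along the diagonal, not on the edge $s=1$. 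A one-variable inequality checked only at $s=1$ would therefore not bound $\tr\bm H$. It would certify about $1.1493$, while the true supremum is about $1.1516$. The final claim $\tr\bm H<7/6$ happens to survive, but your argument would not prove it.

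\emph{The repair, which is what the paper does: convexity in $w$.} For fixed $\alpha$, the map $w\mapsto a/w+bw$ with $a,b>0$ is convex on $(0,\infty)$. Hence on $1-R(\alpha)^2/4\le w<1$ its supremum is attained at one of the two endpoints. You then need two one-variable inequalities on $[0,\pi/12]$:
\begin{itemize}
\item $a(\alpha)+b(\alpha)<7/6$, the $s\to0$ endpoint $w=1$, which is the binding one;
\item $a(\alpha)/w+b(\alpha)w<7/6$ at $w=1-R(\alpha)^2/4$, the edge $s=1$.
\end{itemize}
The paper shows both endpoint functions are increasing in $\alpha$ and evaluates them at $\pi/12$. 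Your Taylor-polynomial approach for the one-variable step could also work, since the margin below $7/6$ is about $0.015$. It must, however, be carried out for both endpoints, not just for $s=1$.
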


\begin{proof}
By reflection and coordinate-interchange symmetry, it suffices to work in $0\le r<s<1$.  Let
\[
\theta=\frac{\pi r}{12s}\in[0,\frac{\pi}{12}),\qquad
g(\theta)=\sqrt2-\cos\theta.
\]
In particular, $g(\theta)\ge\sqrt2-1$. Write
\[
\bm T=\bigl(T_1(s,r),T_2(s,r)\bigr)^T
\quad\text{and}\quad
w=1-\frac{\|\bm T\|_{\ell_2}^2}{4}=1-\frac{s^2}{4}\left(4-\frac{\sqrt2}{g(\theta)}\right).
\]
Note that  $\frac{\sqrt2}{4g(\theta)}<w<1$ and
\begin{equation*}
F(s,r)=\left(\sqrt{w}\cdot T_1(s,r),\sqrt{w}\cdot T_2(s,r), 1-\frac{T_1(s,r)^2+T_2(s,r)^2}{2}\right).
\end{equation*}
Let $\bm J_T=(\partial_s\bm T,\partial_r\bm T)\in\mathbb{R}^{2\times 2}$ be the Jacobian matrix of the map $\bm T$.  Differentiating the formulas for $\bm T$ and $F$ gives
\[
\qquad
\bm H=w\bm J_T^T\bm J_T+
\frac{1+w}{4w}\bm J_T^T\bm T\bm T^T\bm J_T.
\]
Substitution yields
\begin{equation}
\begin{aligned}
\tr\bm H
&=f(w,\theta):=
\frac{A(\theta)}{w}+
B(\theta)w,
\qquad
\det\bm H=\frac{\pi^2}{36}.
\end{aligned}
\label{eq:metric-direct-formula}
\end{equation}
Here,
\begin{equation*}
\begin{aligned}
A(\theta)&:=\sqrt2\cdot \frac{
(2g(\theta)(2\sqrt2g(\theta)-1)-\theta\sin\theta)^2
+\frac{\pi^2}{144}\sin^2\theta}
{4g(\theta)^3(2\sqrt2g(\theta)-1)}>0,\\
B(\theta)&:=
\frac{2\sqrt2g(\theta)(\theta^2+\frac{\pi^2}{144})}{2\sqrt2g(\theta)-1}.	
\end{aligned}	
\end{equation*}
For fixed $\theta$, the trace $\tr\bm H= \frac{A(\theta)}{w}+ B(\theta)w$ is convex in $w$. Note that $\frac{\sqrt2}{4g(\theta)}<w<1$. Hence, for each $\theta\in[0,\frac{\pi}{12})$, we have $\tr\bm H\leq \max\{ f_1(\theta), f_2(\theta)\}$, where
\begin{align}
f_1(\theta)
&:=f\Big(\frac{\sqrt2}{4g(\theta)},\theta\Big)
=4-\frac{\sqrt2}{g(\theta)}
-\frac{\sqrt2\theta\sin\theta}{g(\theta)^2}
+(\theta^2+\frac{\pi^2}{144})
\left(1+\frac{\sqrt2}{4g(\theta)}+\frac{\sqrt2}{4g(\theta)^3}\right),
\label{eq:metric-tau-zero}\\
f_2(\theta)
&:=f\Big(1,\theta\Big) 
=4(3-2\sqrt2\cos\theta)
-\frac{4\theta\sin\theta}{g(\theta)}
+\frac{\theta^2+\frac{\pi^2}{144}}{g(\theta)^2}.
\label{eq:metric-tau-one}
\end{align}
Both functions are strictly increasing on $[0,\frac{\pi}{12}]$.  Indeed,
\[
\frac{\dd f_1}{\dd\theta}
=
\frac{\sqrt2(g(\theta)^2+3)}{4g(\theta)^4}
\left[2\theta g(\theta)(2\sqrt2g(\theta)-1)-(\theta^2+\frac{\pi^2}{144})\sin\theta\right].
\]
For $0<\theta\le \frac{\pi}{12}$, its bracket is at least $2\theta(5\sqrt2-7-\frac{\pi^2}{144})>0$, because
\[
g(\theta)(2\sqrt2g(\theta)-1)\ge(\sqrt2-1)^3=5\sqrt2-7
\quad\text{and}\quad
(\theta^2+\frac{\pi^2}{144})\sin\theta\leq 2\cdot \frac{\pi^2}{144}\cdot \theta.
\]
Hence, $\frac{\dd f_1}{\dd\theta}>0$ for $0<\theta\le \frac{\pi}{12}$.

For the other endpoint, differentiation gives
\[
\begin{aligned}
\frac{\dd f_2}{\dd\theta}
=\frac2{g(\theta)^3}(\sin\theta\cdot \,I_1+I_2 ).
\end{aligned}
\]
where
\begin{equation*}
I_1:=g(\theta)(1+2g(\theta))(2\sqrt2g(\theta)-1)-\theta^2-\frac{\pi^2}{144}
\quad\text{and}\quad
I_2:=(\theta-\sin\theta)g(\theta)(2\sqrt2g(\theta)-1)	.
\end{equation*}
The term $I_2$ is nonnegative.  To bound $I_1$, we use
\[
g(\theta)(1+2g(\theta))\ge5-3\sqrt2>\frac34,
\qquad
2\sqrt2g(\theta)-1
>\frac16+\frac{14}{5}
\left(\frac{\theta^2}{2}-\frac{\theta^4}{24}\right).
\]
The latter inequality follows from $1-\cos\theta\ge\theta^2/2-\theta^4/24$.  Hence, since $\theta\leq \frac{\pi}{12}<\frac13$,
\[
I_1>
\frac18-\frac{\pi^2}{144}+
\theta^2\left(\frac1{20}-\frac{7\theta^2}{80}\right)>0.
\]
Thus $\dd f_2/\dd\theta>0$ for $0<\theta\le \frac{\pi}{12}$.

Since both $ f_1(\theta)$ and $ f_2(\theta)$ are increasing when $0<\theta\le \frac{\pi}{12}$, we have $\tr\bm H\leq \max\{ f_1(\frac{\pi}{12}), f_2(\frac{\pi}{12})\}$. Since
\[
\begin{aligned}
f_1(\frac{\pi}{12})
&=2-\frac{2\sqrt3}{3}
-\frac{\pi(1+\sqrt3)}{18}
+\frac{\pi^2(63+23\sqrt3)}{1296}<\frac76,\\
f_2(\frac{\pi}{12})
&=8-4\sqrt3-\frac{\pi\sqrt3}{9}
+\pi^2\left(\frac1{27}+\frac{\sqrt3}{54}\right)<\frac76,
\end{aligned}
\]
we obtain $\tr\bm H< \frac76$. Note that
\[
(\tr\bm H)^2-4\det\bm H
<
\left(\frac76\right)^2-\frac19\left(\frac{157}{50}\right)^2
=\frac{166}{625}
<\left(\frac{13}{25}\right)^2.
\]
Consequently, since $\bm H$ is a $2\times 2$ matrix, we have
\[
\lambda_{\max}(\bm H)
=\frac{\tr\bm H+\sqrt{(\tr\bm H)^2-4\det\bm H}}2
<\frac12\left(\frac76+\frac{13}{25}\right)
=\frac{253}{300}<\frac{27}{32}.
\]
This proves equation~\eqref{eq:metric-trace-bound}.

It remains to prove equation~\eqref{eq:metric-global-lipschitz}. The map $\bm T$, and hence $F$, is continuous on the closed parameter square.  First let $\bm p,\bm q$ be interior points whose connecting segment is contained in neither formula seam.  Split the segment at its finitely many seam intersections.  On each open smooth subsegment, the bound $\|\bm J_F\|_{\op}\le\sqrt{253/300}$ shows that the length of its image is at most $\sqrt{253/300}$ times its Euclidean length. Apply this estimate first on closed subsegments within the smooth sector and then pass to their endpoints by continuity.  Since geodesic distance is bounded by curve length, the triangle inequality gives $\dist(F(\bm p),F(\bm q))\le\sqrt{253/300}\|\bm p-\bm q\|_{\ell_2}$. For arbitrary endpoints in the closed square, approximate them by interior endpoints whose connecting segment is contained in neither seam.  The bound just proved applies to each approximating segment. Continuity then gives
\[
\dist\bigl(F(\bm p),F(\bm q)\bigr)
\le \sqrt{\frac{253}{300}}\,\|\bm p-\bm q\|_{\ell_2}
<\frac{3\sqrt{6}}{8}\,\|\bm p-\bm q\|_{\ell_2}
\]
whenever $\bm p\ne\bm q$.  This proves equation~\eqref{eq:metric-global-lipschitz}.   Rotations preserve spherical distance and the metric matrix, so the same estimates hold for all six face maps.

\end{proof}

\begin{lemma}
\label{app-lemma1}
Let $C$ be a closed cubed-sphere cell defined in \eqref{xueq81}, which satisfies  $\sigma(C)=1/N$. Let $\bm c$ be the unique Karcher center of $C$. Suppose that for some $\bm x\in\Sph$ and $0<R\leq \frac1{3\sqrt{2}}$, we have
\begin{equation}\label{xueq64}
C\subset B(\bm x,R),
\qquad
N\int_{C}\dist(\bm x,\bm y)^2\dd\sigma(\bm y)<\frac{R^2}{3}.
\end{equation}
Then
\begin{equation}\label{xueq50}
C
\subset B\left(\bm c,\frac{5R}{3}\right).
\end{equation}
\end{lemma}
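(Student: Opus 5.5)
The plan is to show that the Karcher center is close to $\bm x$, specifically $d:=\dist(\bm x,\bm c)<\tfrac23R$. The inclusion \eqref{xueq50} then follows from the triangle inequality: for $\bm y\in C$ we get $\dist(\bm c,\bm y)\le d+\dist(\bm x,\bm y)<\tfrac23R+R=\tfrac53R$. In Euclidean space this is the variance decomposition $E(\bm x)=E(\text{mean})+\|\bm x-\text{mean}\|^2/N$. That decomposition gives $d^2<R^2/3$ directly, which is already smaller than $(2R/3)^2$. On the sphere I would replace it by a strong-convexity estimate for the energy $\mathcal E(\bm z)=\int_C\dist(\bm z,\bm y)^2\dd\sigma(\bm y)$. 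This estimate loses only a curvature factor close to $1$ because $R\le 1/(3\sqrt2)$ is small.

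\emph{Step 1 (a priori localization).} Since $C\subset B(\bm x,R)$ and $R<\pi/2$, Afsari's theorem \cite[Theorem~2.1]{Afsari2011}, used exactly as in Remark~\ref{rem:cell-karcher-center}, puts the unique Karcher center inside the same ball. Hence $d<R$. Consequently every point $\bm z$ on the minimizing geodesic $\gamma$ from $\bm c$ to $\bm x$ satisfies $\dist(\bm z,\bm y)<2R\le\sqrt2/3$ for all $\bm y\in C$. In particular, every function $\bm z\mapsto\dist(\bm z,\bm y)^2$ with $\bm y\in C$ is smooth along $\gamma$.

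\emph{Step 2 (second-order comparison).} I would use the standard Hessian formula for the squared distance on $\Sph$. For $0<r=\dist(\bm z,\bm y)<\pi$, the Riemannian Hessian of $\tfrac12\dist(\cdot,\bm y)^2$ at $\bm z$ has eigenvalue $1$ in the radial direction and $r\cot r$ in the orthogonal direction. Hence it is bounded below by $\kappa:=\min_{0\le r\le\sqrt2/3}r\cot r=\tfrac{\sqrt2}{3}\cot\tfrac{\sqrt2}{3}$, and a short estimate (e.g.\ $\tan r\le r+r^3/2$ on this range) gives $\kappa>0.9$. Set $g(s)=\mathcal E(\gamma(s))$ with $\gamma$ of constant speed $d$ on $[0,1]$. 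Then $g'(0)=0$ by the first-variation identity \eqref{eq:karcher-first-variation}, and $g''(s)\ge 2\kappa d^2\sigma(C)=2\kappa d^2/N$. Taylor's formula then yields
\[
\frac{R^2}{3N}>\mathcal E(\bm x)=g(1)\ge g(0)+\frac{\kappa d^2}{N}\ge\frac{\kappa d^2}{N}.
\]

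\emph{Step 3 (conclusion).} From Step 2, $d^2<R^2/(3\kappa)<R^2/2.7$, so $d<0.61R<\tfrac23R$, and the triangle inequality finishes the proof as described above.

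The main obstacle is Step 2. It requires a rigorous justification of the Hessian lower bound $r\cot r$, which I would take from the explicit Jacobi-field computation on the unit sphere or from \cite{Karcher1977}. It also requires care that differentiation under the integral is legitimate. That is fine because Step 1 keeps all distances bounded away from $0$-singularity issues (squared distance is smooth at $r=0$) and far from the cut locus $r=\pi$.
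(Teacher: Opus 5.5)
Your proof is correct, but it takes a different route from the paper's. The two arguments differ in how they localize $\bm c$ and in what order of geometry they use.

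\textbf{The paper's route.} It uses no second-order geometry.
\begin{enumerate}
\item It obtains the a priori bound $\rho=\dist(\bm c,\bm x)<2R/\sqrt3$ from the minimality of $\bm c$ and Minkowski's inequality, rather than from Afsari's localization.
\item It rotates so that $\bm c=\bm e_3$ and $\bm x=(\sin\rho,0,\cos\rho)$, and writes the first-variation identity as $N\int_C w_{\bm y}(y_1,y_2)\,\dd\sigma(\bm y)=(0,0)$, with weight $w_{\bm y}=\dist(\bm c,\bm y)/\sin\dist(\bm c,\bm y)\in[1,3R/\sin 3R)$.
\item It combines this with the chord bound $\|(\sin\rho,0)-(y_1,y_2)\|_{\ell_2}\le\dist(\bm x,\bm y)$ and Cauchy--Schwarz to get $\sin\rho<\frac{3R}{\sin 3R}\cdot\frac{R}{\sqrt3}$.
\item Converting $\sin\rho$ to $\rho$ gives $\rho<\frac{243}{220\sqrt3}R\approx0.64R$.
\end{enumerate}
This is a purely first-order argument. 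It needs only the explicit formula for $\xi_{\bm c}$ and elementary chord--arc comparisons, so it is fully self-contained.

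\textbf{Your route.} Your strong-convexity argument is the direct spherical analogue of the Euclidean variance decomposition. It is conceptually cleaner and gives a slightly sharper constant, $d<R/\sqrt{3\kappa}\approx0.61R$. The cost is that it imports two things the paper never needs:
\begin{itemize}
\item the Hessian formula $\operatorname{diag}(1,r\cot r)$ for $\tfrac12\dist(\cdot,\bm y)^2$;
\item a differentiation under the integral sign.
\end{itemize}
Both are standard and harmless here, since all distances along $\gamma$ stay well below $\pi/2$.

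Your numerics check out. The bound $\tan r\le r+r^3/2$ on $[0,\sqrt2/3]$ gives $r\cot r\ge 1/(1+r^2/2)\ge 9/10$, hence $d^2<R^2/2.7$.

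Your Step~1 also does not actually need Afsari. The paper's Minkowski bound $d<2R/\sqrt3$ keeps all distances along $\gamma$ below $(1+2/\sqrt3)R<0.51$. There the same crude estimate gives $\kappa>0.88$, so $d<0.62R<\tfrac23R$ and your conclusion is unchanged.
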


\begin{proof}
All the hypotheses and the conclusion are invariant under rotations of $\Sph$.  We may therefore assume that
\begin{equation*}
\bm c=\bm e_3=(0,0,1).
\end{equation*}
Since $\bm c$ is the minimizer of the Karcher energy, the triangle inequality and Minkowski's inequality give
\begin{equation}\label{xueq49}
\begin{aligned}
\rho:=\dist(\bm c,\bm x)
&\le
\left(N\int_{C}\dist(\bm c,\bm y)^2\dd\sigma(\bm y)\right)^{1/2}
+\left(N\int_{C}\dist(\bm x,\bm y)^2\dd\sigma(\bm y)\right)^{1/2}<\frac{2R}{\sqrt3}.
\end{aligned}
\end{equation}
If $\rho=0$, equation~\eqref{xueq50} follows immediately from the containment hypothesis.  We therefore assume $\rho>0$. We claim that
\begin{equation}\label{xueq48}
\sin\rho
<
\frac{3R}{\sin(3R)}\frac{R}{\sqrt3}.	
\end{equation}
Since $g(s):=s/\sin s$ is increasing, we have
\begin{equation}
\begin{aligned}
\rho=\frac{\rho}{\sin\rho}\sin\rho 
\overset{(a)}<g\Big (\frac{2R}{\sqrt3}\Big)
\sin\rho
\overset{(b)}<g\Big (\frac{2R}{\sqrt3}\Big)g\Big (3R\Big)
\frac{R}{\sqrt3},
\end{aligned}
\label{eq:rho-two-chord-factors}
\end{equation}
where ($a$) follows from equation~\eqref{xueq49} and ($b$) follows from equation~\eqref{xueq48}.  For $0<s<1$, the alternating Taylor series gives $\sin s/s>1-s^2/6$.  Since $R^2\le1/18$, it follows that
\[
g\left(\frac{2R}{\sqrt3}\right)<\frac{81}{80},
\qquad
g(3R)<\frac{12}{11}.
\]
Consequently,
\[
\rho<\frac{81}{80}\cdot\frac{12}{11}\cdot\frac{R}{\sqrt3}
=\frac{243}{220\sqrt3}R
<\frac{2R}{3}.
\]
Using the triangle inequality, we obtain that for any $\bm y\in C$,
\begin{equation*}
\dist(\bm c,\bm y)\leq \dist(\bm c,\bm x)+\dist(\bm x,\bm y)<\frac{2R}{3}+R=	\frac{5R}{3},
\end{equation*}
which gives equation~\eqref{xueq50}.

It remains to prove equation~\eqref{xueq48}. After an additional rotation about the third coordinate axis, we may assume that
\[
\bm x=(\sin\rho,0,\cos\rho).
\]
For every $\bm y\in C$, the triangle inequality gives
\begin{equation}\label{xueq61}
\dist(\bm c,\bm y)
\le
\rho+\dist(\bm x,\bm y)
<
\left(1+\frac{2}{\sqrt3}\right)R
<3R
\le\frac{1}{\sqrt2}
<\frac{\pi}{2},
\end{equation} 
where we used $R^2\le1/18$.  Thus the Karcher energy is smooth near $\bm c$, and its first variation gives
\begin{equation}\label{xueq51}
\int_{C}\xi_{\bm c}(\bm y)\dd\sigma(\bm y)=\bm0.
\end{equation}
For simplicity, denote
\begin{equation*}
w_{\bm y}:=
\frac{\dist(\bm c,\bm y)}
{\sin\dist(\bm c,\bm y)}\in\R,
\qquad
A:=N\int_{C}w_{\bm y}\,\dd\sigma(\bm y)\in\R.	
\end{equation*}
Since $\bm c=(0,0,1)$, equation~\eqref{eq:tangent-displacement} becomes
\[
\xi_{\bm c}(\bm y)
=w_{\bm y}\cdot  (y_1,y_2,0)\in\R^3,
\quad\forall {\bm y}=(y_1,y_2,y_3)\in C,
\]
where the quotient is understood to be $1$ when $\bm y=\bm c$. Consequently, equation~\eqref{xueq51} implies
\begin{equation}
N\int_{C}
w_{\bm y}\cdot 
(y_1,y_2)\dd\sigma(\bm y)
=(0,0).
\label{eq:horizontal-first-variation}
\end{equation}
Using equation~\eqref{eq:horizontal-first-variation} and $(x_1,x_2)=(\sin\rho,0)$, we obtain
\begin{equation}\label{xueq65}
\begin{aligned}
\left\|
\displaystyle
N\int_{C}
w_{\bm y}\cdot 
\bigl((\sin\rho,0)-(y_1,y_2)\bigr)
\dd\sigma(\bm y)
\right\|_{\ell_2}
&=\left\|
\displaystyle
N\int_{C}
w_{\bm y}\cdot 
(\sin\rho,0)
\dd\sigma(\bm y)
\right\|_{\ell_2}\\
&=\left\|
\displaystyle
A\cdot 
(\sin\rho,0)
\right \|_{\ell_2}=A\cdot \sin\rho.	
\end{aligned}
\end{equation}
The function $s/\sin s$ is increasing on $[0,\pi/2)$, with its continuous value $1$ at the origin.  By \eqref{xueq61} we have $\dist(\bm c,\bm y)<3R$, so
\begin{equation}\label{xueq62}
1
\le
w_{\bm y}
<
\frac{3R}{\sin(3R)}.
\end{equation}
Then we have
\begin{equation*}
A=N\int_{C}w_{\bm y}\,\dd\sigma(\bm y)\geq N\int_{C}\,\dd\sigma(\bm y)=1.
\end{equation*}
Moreover,
\begin{equation}\label{xueq63}
\begin{aligned}
\left\|(\sin\rho,0)-(y_1,y_2)\right\|_{\ell_2}
&\le \|\bm x-\bm y\|_{\ell_2} =2\sin\frac{\dist(\bm x,\bm y)}{2} \le \dist(\bm x,\bm y).
\end{aligned}
\end{equation}
Note that
\begin{equation*}
\begin{aligned}
\left\|
\displaystyle
N\int_{C}
w_{\bm y}\cdot 
\bigl((\sin\rho,0)-(y_1,y_2)\bigr)
\dd\sigma(\bm y)
\right\|_{\ell_2}
&\overset{(a)}\leq 
\displaystyle
N\int_{C}
w_{\bm y}\cdot 
\left\|(\sin\rho,0)-(y_1,y_2)\right\|_{\ell_2}
\dd\sigma(\bm y)\\
&\overset{(b)}\leq \frac{3R}{\sin(3R)}N\int_{C}\dist(\bm x,\bm y)\dd\sigma(\bm y)\\
&\overset{(c)}\leq \frac{3R}{\sin(3R)}
\left(
N\int_{C}\dist(\bm x,\bm y)^2\dd\sigma(\bm y)
\right)^{1/2}\\
&\overset{(d)} <
\frac{3R}{\sin(3R)}\frac{R}{\sqrt3}.	
\end{aligned}	
\end{equation*}
Here, ($a$) follows from $w_{\bm y}\geq 1$ and the triangle inequality  for vector-valued integrals, ($b$) follows from \eqref{xueq62} and \eqref{xueq63}, ($c$) follows from Cauchy-Schwarz inequality, and ($d$) follows from the assumption in \eqref{xueq64}. Combining with $A\geq 1$ and \eqref{xueq65}, we obtain equation~\eqref{xueq48}.  This completes the proof.

\end{proof}

\begin{lemma}
\label{lem:weighted-integral-variance}
Let $a>0$ and let $I=[-a,a]$.  For an absolutely continuous function $f:I\to\R$, set
\[
f_*=\frac1{2a}\int_I f(s)\dd s.
\]
Then
\begin{equation}\label{xueq34}
\int_I|f(s)-f_*|^2\dd s
\le
\int_I\frac{a^2-s^2}{2}|f'(s)|^2\dd s.
\end{equation} 
Consequently, if $\Phi:I^2\to\R^n$ is Lipschitz and piecewise $C^1$, and
\[
\Phi_*:=\frac1{(2a)^2}\int_I\int_I\Phi(x,y)\dd x\dd y\in \R^n,
\]
then
\begin{equation}\label{xueq66}
\begin{aligned}
&\int_I\int_I\|\Phi(x,y)-\Phi_*\|_{\ell_2}^2\dd x\dd y\\
&\le
\int_I\int_I
\left(\frac{a^2-x^2}{2}\|\partial_x\Phi(x,y)\|_{\ell_2}^2+
\frac{a^2-y^2}{2}\|\partial_y\Phi(x,y)\|_{\ell_2}^2\right)\dd x\dd y.
\end{aligned}
\end{equation}
\end{lemma}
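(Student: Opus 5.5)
The one-dimensional inequality \eqref{xueq34} will follow from a Hardy-type representation of $f-f_*$ together with a weighted Cauchy--Schwarz. Integrating by parts against the kernel $K(s,u)$ (for fixed $s$), we write
\[
f(s)-f_*=\frac1{2a}\int_I\bigl(f(s)-f(u)\bigr)\dd u
=\frac1{2a}\int_I k_s(v)\,f'(v)\dd v,
\]
where
\[
k_s(v)=\begin{cases}a+v,& v<s,\\ -(a-v),& v>s.\end{cases}
\]
This holds because $\int_I(f(s)-f(u))\dd u=\int_{-a}^{s}(u+a)f'(u)\dd u-\int_s^a(a-u)f'(u)\dd u$, which is checked by Fubini on $f(s)-f(u)=\int_u^s f'$.

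Next we apply Cauchy--Schwarz with weight $w(v)=(a^2-v^2)$:
\[
|f(s)-f_*|^2\le\frac1{4a^2}\int_I\frac{k_s(v)^2}{w(v)}\dd v\int_I w(v)|f'(v)|^2\dd v
\]
is too crude, since it loses the $s$-dependence. Instead we use the weighted Schur test. We integrate $|f(s)-f_*|^2$ over $s$ and bound the operator $Tg(s)=\frac1{2a}\int k_s(v)g(v)\dd v$ from $L^2(w\,\dd v)$ to $L^2(\dd s)$, with $g=f'$. Equivalently, we need
\[
\|T\|^2\le\tfrac12,\qquad Sh(s)=\frac1{2a}\int_I\frac{k_s(v)}{\sqrt{w(v)}}h(v)\dd v,\quad h=\sqrt w\,f'.
\]

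The cleanest route is spectral. The inequality is a Poincaré inequality for the Legendre operator: with $a=1$ after scaling, $-\bigl((1-s^2)f'\bigr)'$ has eigenfunctions the Legendre polynomials $P_n$ with eigenvalues $n(n+1)$. Hence
\[
\int_{-1}^1(1-s^2)|f'|^2\ge 2\int_{-1}^1|f-f_*|^2,
\]
since the first nonzero eigenvalue is $2$. Rescaling $s\mapsto as$, both sides scale by the factor $a$, which gives exactly \eqref{xueq34}. So the plan is: (1) reduce to $a=1$; (2) expand $f-f_*=\sum_{n\ge1}c_nP_n$ in $L^2$, using that $\{P_n\}$ is a complete orthogonal basis; (3) show that $\int(1-s^2)|f'|^2=\sum n(n+1)c_n^2\|P_n\|^2$. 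Step (3) is the main obstacle when $f$ is merely absolutely continuous with finite weighted energy. I would first prove it for polynomials, where integration by parts has vanishing boundary terms because $(1-s^2)$ vanishes at $\pm1$. I would then pass to general $f$ by density of polynomials in the weighted Sobolev space $\{f:\sqrt{1-s^2}f'\in L^2\}$, or more simply by approximating $f$ by $C^1$ functions on $[-1,1]$; if the right side is infinite there is nothing to prove.

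For the two-dimensional consequence \eqref{xueq66}, we work componentwise and sum over the $n$ coordinates. Let $g(y)=\frac1{2a}\int_I\Phi(x,y)\dd x$. Then orthogonally
\[
\int\!\!\int\|\Phi-\Phi_*\|^2=\int\!\!\int\|\Phi(x,y)-g(y)\|^2+2a\int_I\|g(y)-\Phi_*\|^2\dd y.
\]
The first term is bounded, for each fixed $y$, by \eqref{xueq34} in $x$. The second term is bounded by \eqref{xueq34} in $y$ applied to $g$, combined with $\|g'(y)\|^2\le\frac1{2a}\int_I\|\partial_y\Phi(x,y)\|^2\dd x$ (Jensen, with differentiation under the integral justified by the Lipschitz property). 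This yields exactly the two terms on the right of \eqref{xueq66}.
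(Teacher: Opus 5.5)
Your proof is correct in outline, and your two-dimensional step (orthogonal splitting through $g(y)$, the one-dimensional bound applied slicewise in $x$, then the one-dimensional bound for $g$ with Jensen on $g'$) is exactly the paper's. The difference is in how you prove the one-dimensional inequality \eqref{xueq34}.

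The paper proves it directly, in three steps:
\begin{itemize}
\item it uses the pair-difference identity $\int_I|f-f_*|^2\dd s=\frac1{4a}\int_I\int_I|f(s)-f(t)|^2\dd s\dd t$;
\item it applies unweighted Cauchy--Schwarz on each segment, $|f(s)-f(t)|^2\le(t-s)\int_s^t|f'|^2$;
\item it applies Fubini with $\int_{-a}^{u}\int_u^a(t-s)\dd t\dd s=a(a^2-u^2)$, which gives exactly the weight $\frac{a^2-u^2}{2}$.
\end{itemize}
That argument uses only absolute continuity of $f$ and needs no approximation. It is already sharp, with equality for linear $f$.

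Your spectral route recognizes \eqref{xueq34} as the Poincar\'e inequality for the Legendre operator. This explains the constant $\frac12$ as the reciprocal of the first nonzero eigenvalue $n(n+1)=2$. However, it moves the work into passing from polynomials to general $f$, and you leave that step loose: generic mollification does not obviously control $\int(1-s^2)|f'|^2$ near $\pm1$.

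One clean way to close it is by dilation. For $0<\lambda<1$ put $f_\lambda(s)=f(\lambda s)$. Then:
\begin{itemize}
\item $f_\lambda\in H^1(-1,1)$ whenever the weighted energy of $f$ is finite;
\item $\int_{-1}^1(1-s^2)|f_\lambda'|^2\dd s\le\lambda\int_{-1}^1(1-s^2)|f'|^2\dd s$;
\item the inequality for $f_\lambda$ follows from the polynomial case, by density of polynomials in $H^1(-1,1)$;
\item $f_\lambda\to f$ uniformly as $\lambda\to1$.
\end{itemize}

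Finally, the kernel representation and Schur-test discussion at the start are never used. With the weight $a^2-v^2$ as you wrote it, integrating in $s$ gives the constant $1$ instead of $\frac12$. The paper avoids that factor-$2$ loss by averaging over pairs first and then applying Cauchy--Schwarz on each segment.
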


\begin{proof}
We first prove the one-dimensional inequality. Since $f_*$ is the average of $f$ on $I$, the elementary variance identity gives
\begin{equation*}
\int_I|f(s)-f_*|^2\dd s
=\frac1{4a}\int_I\int_I|f(s)-f(t)|^2\dd s\dd t.
\end{equation*}
Since the integrand is symmetric in $s$ and $t$,  we have
\begin{equation}\label{xueq32}
\int_I|f(s)-f_*|^2\dd s
=\frac2{4a}\int_{-a}^a\int_{s}^{a}|f(s)-f(t)|^2\dd t\dd s.
\end{equation}
For $s,t\in I$ with $s\leq t$, Cauchy-Schwarz on the interval between $s$ and $t$ gives
\begin{equation}\label{xueq31}
|f(s)-f(t)|^2=\left|\int_s^t f'(u)\dd u\right|^2
\le \left(\int_s^t 1\dd u\right)\cdot \left(\int_s^t |f'(u)|^2\dd u\right)= (t-s)\int_{s}^{t}|f'(u)|^2\dd u.
\end{equation}
Substituting equation~\eqref{xueq31} into equation~\eqref{xueq32} gives
\begin{equation*}
\begin{aligned}
\int_I|f(s)-f_*|^2\dd s
&\leq \frac{2}{4a}	\int_{-a}^a\int_{s}^{a}(t-s)\int_{s}^{t}|f'(u)|^2\dd u\dd t\dd s\\
&= \frac{2}{4a}	\int_{-a}^a \left( \int_{-a}^{u}\int_{u}^{a}(t-s)\dd t\dd s\right)\cdot |f'(u)|^2\cdot \dd u\\
&=\int_{-a}^{a}\frac{a^2-u^2}{2}|f'(u)|^2\dd u.
\end{aligned}
\end{equation*}
Hence, we arrive at equation~\eqref{xueq34}.

We next prove \eqref{xueq66}.  For each fixed $y\in I$, define
\[
g(y)
=
\frac1{2a}\int_I\Phi(x,y)\dd x\in\mathbb{R}^n .
\]
For each $(x,y)$, we decompose
\[
\Phi(x,y)-\Phi_*
=
\bigl(\Phi(x,y)-g(y)\bigr)
+
\bigl(g(y)-\Phi_*\bigr).
\]
The two terms are orthogonal after integration in $x$, because
\[
\int_I\bigl(\Phi(x,y)-g(y)\bigr)\dd x=\bm0 .
\]
Therefore
\begin{equation}\label{xueq70}
\begin{aligned}
&\int_I\int_I
\|\Phi(x,y)-\Phi_*\|_{\ell_2}^2
\dd x\dd y  =I_1+I_2.
\end{aligned}
\end{equation}
where
\begin{equation*}
I_1:=\int_I\int_I
\|\Phi(x,y)-g(y)\|_{\ell_2}^2
\dd x\dd y
\quad\text{and}\quad
I_2:=2a\int_I
\|g(y)-\Phi_*\|_{\ell_2}^2\dd y.	
\end{equation*}
We first bound $I_1$. For any fixed $y\in I$, applying equation~\eqref{xueq34}  componentwise gives
\[
\int_I
\|\Phi(x,y)-g(y)\|_{\ell_2}^2\dd x
\le
\int_I
\frac{a^2-x^2}{2}
\|\partial_x\Phi(x,y)\|_{\ell_2}^2\dd x .
\]
Integrating this inequality in $y$ gives
\begin{equation}\label{xueq69}
\begin{aligned}
I_1\le
\int_I\int_I
\frac{a^2-x^2}{2}
\|\partial_x\Phi(x,y)\|_{\ell_2}^2
\dd x\dd y .
\end{aligned}
\end{equation} 
We next bound $I_2$.  The average of $g$ on $I$ is $\Phi_*$, so applying equation~\eqref{xueq34} componentwise to $g$ gives
\begin{equation}\label{xueq67}
2a\int_I\|g(y)-\Phi_*\|_{\ell_2}^2\dd y
\le
2a\int_I
\frac{a^2-y^2}{2}\|g'(y)\|_{\ell_2}^2\dd y .
\end{equation}
Note that
\[
g'(y)
=
\frac1{2a}\int_I\partial_y\Phi(x,y)\dd x
\]
for almost every $y$.  The triangle inequality and Cauchy-Schwarz give
\[
\|g'(y)\|_{\ell_2}^2
=
\left\|
\frac1{2a}\int_I\partial_y\Phi(x,y)\dd x
\right\|_{\ell_2}^2
\le
\frac1{2a}\int_I
\|\partial_y\Phi(x,y)\|_{\ell_2}^2\dd x .
\]
Substituting this bound into equation~\eqref{xueq67}, we obtain
\begin{equation}\label{xueq68}
\begin{aligned}
2a\int_I\|g(y)-\Phi_*\|_{\ell_2}^2\dd y
&\le
\int_I\int_I
\frac{a^2-y^2}{2}
\|\partial_y\Phi(x,y)\|_{\ell_2}^2
\dd x\dd y .
\end{aligned}
\end{equation}
Substituting \eqref{xueq68} and \eqref{xueq69} into \eqref{xueq70} proves \eqref{xueq66}.

\end{proof}

\begin{lemma}
\label{lem:chord-arc-centroid-integral}
Let $C$ be a closed cubed-sphere cell with $\sigma(C)=1/N$ and $C\subset B(\bm x,R)$ for some $\bm x\in\Sph$ and $0<R<\pi/2$.  Define
\[
\bm y_*=N\int_{C}\bm y\dd\sigma(\bm y)
\quad\text{and}\quad
V=N\int_{C}\|\bm y-\bm y_*\|_{\ell_2}^2\dd\sigma(\bm y).
\]
Then $\bm y_*\ne\bm0$.  If $\bm z=\frac{\bm y_*}{\|\bm y_*\|_{\ell_2}}$, then
\[
N\int_{C}\dist(\bm z,\bm y)^2\dd\sigma(\bm y)
\le
\left(\frac R{\sin R\cos\frac{R}{2}}\right)^2
V.
\]
\end{lemma}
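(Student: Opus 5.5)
We will compare geodesic distance to $\bm z$ with Euclidean distance to the chordal centroid $\bm y_*$, exploiting that every $\bm y\in C$ lies within $R$ of $\bm x$.

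\textbf{Step 1: $\bm y_*\ne\bm0$ and a lower bound on $\langle \bm y,\bm z\rangle$.} Since $C\subset B(\bm x,R)$ with $R<\pi/2$, every $\bm y\in C$ satisfies $\langle\bm x,\bm y\rangle>\cos R>0$. Hence $\langle\bm x,\bm y_*\rangle>\cos R>0$, so $\bm y_*\neq\bm0$ and $\|\bm y_*\|_{\ell_2}\ge\cos R$. I would also want $\dist(\bm z,\bm y)$ controlled for $\bm y\in C$. The normalized average of points in a cap $B(\bm x,R)$ of radius $<\pi/2$ lies in that cap (the cap is convex in the cone sense), so $\bm z\in\overline{B}(\bm x,R)$. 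This gives only $\dist(\bm z,\bm y)\le 2R$, whereas the $\cos(R/2)$ in the target constant suggests a finer bound. I would therefore aim for a pointwise inequality relating $\dist(\bm z,\bm y)$ to $\|\bm y-\bm y_*\|_{\ell_2}$.

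\textbf{Step 2: pointwise comparison.} Set $\theta=\dist(\bm z,\bm y)$ and $\lambda=\|\bm y_*\|_{\ell_2}\le1$. Then
\[
\|\bm y-\bm y_*\|_{\ell_2}^2=1-2\lambda\cos\theta+\lambda^2\ge \sin^2\theta .
\]
This follows by minimizing over $\lambda$: the minimum is attained at $\lambda=\cos\theta$ when $\cos\theta\ge0$. Thus $\|\bm y-\bm y_*\|\ge\sin\theta$, and so
\[
\theta\le\frac{\theta}{\sin\theta}\,\|\bm y-\bm y_*\|_{\ell_2}.
\]
The factor $\theta/\sin\theta$ is increasing, so a bound $\theta\le\Theta$ gives $\dist(\bm z,\bm y)^2\le(\Theta/\sin\Theta)^2\|\bm y-\bm y_*\|^2$. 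Averaging over $C$ then gives the claim with constant $(\Theta/\sin\Theta)^2$.

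\textbf{Step 3: matching the constant.} The stated constant is
\[
\frac{R}{\sin R\cos(R/2)}=\frac{R}{\sin R}\cdot\frac{1}{\cos(R/2)}.
\]
This suggests splitting the comparison into two steps rather than using $\Theta=2R$ directly. First, by the chord-arc relation,
\[
\theta\le \frac{\theta/2}{\sin(\theta/2)}\,\|\bm y-\bm z\|_{\ell_2}.
\]
Second, compare $\|\bm y-\bm z\|$ with $\|\bm y-\bm y_*\|$ through the radial projection. The projection from $\bm y_*$ to $\bm z$ is along the normal direction, and for points at angle at most $R$ one gets a loss factor $1/\cos(R/2)$ or similar. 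Combining with $\theta\le2R$ gives $(\theta/2)/\sin(\theta/2)\le R/\sin R$.

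So the plan is to prove the pointwise bound
\[
\|\bm y-\bm z\|_{\ell_2}\le\frac{\|\bm y-\bm y_*\|_{\ell_2}}{\cos(R/2)}\qquad\text{for }\bm y\in C.
\]
Writing $\bm y=\cos\theta\,\bm z+\sin\theta\,\bm v$, we have
\[
\|\bm y-\bm z\|^2=2-2\cos\theta=4\sin^2(\theta/2),
\qquad
\|\bm y-\bm y_*\|^2\ge\sin^2\theta=4\sin^2(\theta/2)\cos^2(\theta/2).
\]
The ratio is therefore at most $1/\cos(\theta/2)$. Bounding $\cos(\theta/2)$ below needs $\theta\le R$ rather than $2R$. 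Getting this is the main obstacle. If $\theta$ can only be bounded by $2R$, I would accept the weaker factor $1/\cos R$, or argue more carefully that $\lambda\ge\cos R$ improves the minimization.

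A fallback for the obstacle is to use the exact expression with $\lambda\ge\cos R$ in place of the minimization over $\lambda$. In that case one works with
\[
1-2\lambda\cos\theta+\lambda^2\ge (1-\lambda)^2+4\lambda\sin^2(\theta/2).
\]
This gives $\|\bm y-\bm y_*\|^2\ge 4\lambda\sin^2(\theta/2)$, hence $\|\bm y-\bm z\|^2\le\|\bm y-\bm y_*\|^2/\lambda$. One would then bound $\lambda$ through the variance identity $\lambda^2=1-V$. The final step is to reconcile this with the $\cos(R/2)$ factor, using $\lambda\ge\cos R$ together with the half-angle chord estimate.
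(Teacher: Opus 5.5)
\textbf{Verdict: genuine gap.} The $\frac{R}{\sin R}$ part of your argument is correct, but the $\frac{1}{\cos(R/2)}$ part is missing, so the stated constant is not reached.

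\textbf{What you have right.} Your Step 1 and the chord--arc half of Step 3 are correct and match the paper. From $\langle\bm x,\bm y_*\rangle\ge\cos R$ you get $\bm y_*\ne\bm 0$, $\lambda:=\|\bm y_*\|_{\ell_2}\ge\cos R$ and $\dist(\bm z,\bm x)\le R$. Hence $\dist(\bm z,\bm y)\le 2R$ and $\dist(\bm z,\bm y)\le\frac{R}{\sin R}\|\bm z-\bm y\|_{\ell_2}$.

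\textbf{Where it breaks.} You try to obtain $\frac{1}{\cos(R/2)}$ pointwise, and this does not close. The pointwise bound you can actually prove is $\|\bm y-\bm z\|_{\ell_2}^2\le\|\bm y-\bm y_*\|_{\ell_2}^2/\lambda\le\|\bm y-\bm y_*\|_{\ell_2}^2/\cos R$. This gives only the constant $(R/\sin R)^2/\cos R$, which is strictly worse than the stated one because $\cos R<\cos^2(R/2)$.

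The pointwise inequality you aim for, $\|\bm y-\bm z\|_{\ell_2}\le\|\bm y-\bm y_*\|_{\ell_2}/\cos(R/2)$, is false for general distributions in the cap. Take a measure on a circle just inside $\partial B(\bm x,R)$ with density proportional to $1+\alpha\cos\varphi$. A first-order expansion in small $\alpha>0$ shows the ratio at $\varphi=\pi$ exceeds $1/\cos^2(R/2)$. Your argument uses nothing about the cell's shape, so more care with $\theta$ cannot rescue it. Since your final step (``reconcile this with the $\cos(R/2)$ factor'') is left open, the proposal does not prove the lemma.

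\textbf{The missing idea.} Compare the two second moments exactly after integrating, not pointwise. Because $N\int_C\bm y\dd\sigma(\bm y)=\bm y_*=\lambda\bm z$, you have
\[
N\int_C\|\bm z-\bm y\|_{\ell_2}^2\dd\sigma(\bm y)=2-2\lambda,
\qquad
V=1-\lambda^2 .
\]
Hence
\[
N\int_C\|\bm z-\bm y\|_{\ell_2}^2\dd\sigma(\bm y)=\frac{2}{1+\lambda}V\le\frac{2}{1+\cos R}V=\frac{V}{\cos^2(R/2)},
\]
and combining this with the chord--arc bound gives the lemma. This is exactly the paper's argument.

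Your fallback identity already contains the fix; it is in fact an equality: $\|\bm y-\bm y_*\|_{\ell_2}^2=(1-\lambda)^2+\lambda\|\bm y-\bm z\|_{\ell_2}^2$. Integrating gives $V=(1-\lambda)^2+\lambda W$ with $W=2(1-\lambda)$, that is, $V=\frac{1+\lambda}{2}W$. The term $(1-\lambda)^2=W^2/4$ that you discarded is precisely what improves the factor $1/\lambda$ to $2/(1+\lambda)$.
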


\begin{proof}
Since $C\subset B(\bm x,R)$, we see that for any $\bm y\in C$, 
\[
\langle\bm x,\bm y\rangle
=\cos\dist(\bm x,\bm y)\ge\cos R.
\]
Then
\[
\cos R=N\int_{C}\cos R \dd\sigma(\bm y)\le N\int_{C}\langle\bm x,\bm y\rangle\dd\sigma(\bm y)=\langle\bm x,\bm y_*\rangle
\le\|\bm y_*\|_{\ell_2}\le1.
\]
Since $0<R<\pi/2$, we have $\bm y_*\ne\bm0$, so $\bm z$ is well defined.  Moreover,
\[
\langle\bm x,\bm z\rangle
=\frac{\langle\bm x,\bm y_*\rangle}{\|\bm y_*\|_{\ell_2}}
\ge\cos R.
\]
Thus $\dist(\bm z,\bm x)\leq R$.

We next compare geodesic and chordal distances from $\bm z$.  If $\bm y\in C$, then the triangle inequality gives
\[
\dist(\bm z,\bm y)
\le \dist(\bm z,\bm x)+\dist(\bm x,\bm y)\le2R<\pi.
\]
For two points on $\Sph$ at geodesic distance $d$, their Euclidean chord length is $2\sin(d/2)$.  Hence
\[
\|\bm z-\bm y\|_{\ell_2}
=2\sin({\dist(\bm z,\bm y)}/2).
\]
The function $\frac{s}{\sin s}$ is increasing on $0<s<\frac{\pi}{2}$.   Combining with
\begin{equation*}
\dist(\bm z,\bm y)\le \dist(\bm z,\bm x)+\dist(\bm x,\bm y)\leq 2R,	
\end{equation*}
we have
\begin{equation}\label{xueq35}
\dist(\bm z,\bm y)
\le\frac R{\sin R}\cdot \|\bm z-\bm y\|_{\ell_2}.
\end{equation}

It remains to express the Euclidean second moment about $\bm z$ in terms of the variance $V$ about $\bm y_*$.  Using the definition of $\bm y_*$ and $\bm z=\frac{\bm y_*}{\|\bm y_*\|_{\ell_2}}$, a direct expansion gives
\[
\begin{aligned}
V=N\int_{C}\|\bm y-\bm y_*\|_{\ell_2}^2\dd\sigma(\bm y)
=1-\|\bm y_*\|_{\ell_2}^2
\end{aligned}
\]
and
\[
\begin{aligned}
N\int_{C}\|\bm z-\bm y\|_{\ell_2}^2\dd\sigma(\bm y)
&=N\int_{C}
\bigl(\|\bm z\|_{\ell_2}^2-2\langle\bm z,\bm y\rangle
+\|\bm y\|_{\ell_2}^2\bigr)\dd\sigma(\bm y)
=\frac{2V}{1+\|\bm y_*\|_{\ell_2}}.
\end{aligned}
\]
Combining this identity with the chord-arc estimate in equation~\eqref{xueq35} gives
\[
\begin{aligned}
N\int_{C}\dist(\bm z,\bm y)^2\dd\sigma(\bm y)
&\leq \left(\frac R{\sin R}\right)^2 N\int_{C}\|\bm z-\bm y\|_{\ell_2}^2\dd\sigma(\bm y)\\
&=\left(\frac R{\sin R}\right)^2
\frac{2V}{1+\|\bm y_*\|_{\ell_2}}\le\left(\frac R{\sin R\cos\frac{R}{2}}\right)^2
V,
\end{aligned}
\]
because $\|\bm y_*\|_{\ell_2}\ge\cos R$ and $1+\cos R=2\cos^2(R/2)$.

\end{proof}

Now we give a proof of Lemma~\ref{lem:karcher-localization}.

\begin{proof}[Proof of Lemma~\ref{lem:karcher-localization}]
(i)  Recall that $C$ is one equal-area cell.  
Since each face map is obtained from the north-face map $F$ by a rotation, we may assume that $C=F(Q)$ for a closed grid square $Q\subset[-1,1]^2$. Let $\bm p$ be the Euclidean center of $Q$.
Set
\[
\delta=\frac2q,
\qquad
I_\delta=\left[-\frac\delta2,\frac\delta2\right],
\qquad
\bm x=F(\bm p)
\]
and
\[
\Phi(x,y):=F\bigl(\bm p+(x,y)\bigr)\in\Sph,\quad\forall (x,y)\in I_\delta^2.
\]
By equation~\eqref{eq:metric-global-lipschitz}, $\Phi$ is Lipschitz, and its defining formulas make it piecewise $C^1$.  At every point $(x,y)\in I_\delta^2$ at which $\Phi$ is differentiable, define
\begin{equation*}
\widehat{\bm H}(x,y)
=
\bm J_F\bigl(\bm p+(x,y)\bigr)^T
\bm J_F\bigl(\bm p+(x,y)\bigr)
=(\widehat{H}_{j,k})_{j,k=1}^2\in\mathbb{R}^{2\times 2},	
\end{equation*}
where $\bm J_F\in\mathbb{R}^{3\times 2}$ is the Jacobian matrix of the map $F$ defined in equation~\eqref{eq:direct-face-map}. Using the chain rule, we have $\widehat{H}_{1,1}=\|\partial_x\Phi\|_{\ell_2}^2$, $\widehat{H}_{2,2}=\|\partial_y\Phi\|_{\ell_2}^2$, and
\begin{equation*}
\widehat{\bm H}(x,y)=\bm H(\bm p+(x,y)),	
\end{equation*}
where $\bm H$ is the metric matrix used in Lemma~\ref{prop:cubed-sphere-metric}.
%
%
%
By equation~\eqref{eq:metric-direct-formula}, the spherical area Jacobian is $(\det\widehat{\bm H})^{1/2}=\pi/6$ almost everywhere.  The face map is one-to-one away from cell boundaries.  The parameter seams and cell boundaries are planar null sets, and their Lipschitz images are spherical-area null sets. Hence, for every integrable $\Psi$,  we have
\begin{equation}
\begin{aligned}
N\int_{C}\Psi(\bm y)\dd\sigma(\bm y)
&=\frac{N}{4\pi}\int_{I_\delta}\int_{I_\delta}
\Psi(\Phi(x,y))(\det\widehat{\bm H})^{1/2}\dd x\dd y
\\
&=\frac{N}{24}\int_{I_\delta}\int_{I_\delta}
\Psi(\Phi(x,y))\dd x\dd y
=\frac1{\delta^2}\int_{I_\delta}\int_{I_\delta}
\Psi(\Phi(x,y))\dd x\dd y.
\end{aligned}
\label{eq:cell-pullback-integral}
\end{equation}
Here, we use $\dd\sigma=\dd S/(4\pi)$ and $N=6q^2=24/\delta^2$. Take
\[
R:=\frac{3\sqrt3}{8}\delta=\frac{3\sqrt3}{4q}.
\]
For every nonzero $(x,y)\in I_\delta^2$, equation~\eqref{eq:metric-global-lipschitz} gives
\begin{equation}\label{xueq43}
\dist(\bm x,\Phi(x,y))
<\frac{3\sqrt6}{8}\sqrt{x^2+y^2}
\le\frac{3\sqrt6}{8}\cdot \frac{\delta}{\sqrt2}
=R.
\end{equation}
At $(x,y)=(0,0)$ the distance is $0<R$.   Thus,
\begin{equation}
C\subset B(\bm x,R).
\label{eq:cell-support-about-xi}
\end{equation}
Taking $\Psi(\bm y)=\dist(\bm x,\bm y)^2$ in equation~\eqref{eq:cell-pullback-integral} yields
\begin{align*}
N\int_{C}\dist(\bm x,\bm y)^2\dd\sigma(\bm y)
&=\frac1{\delta^2}\int_{I_\delta}\int_{I_\delta}
\dist(\bm x,\Phi(x,y))^2\dd x\dd y\\
&<\frac{27}{32\delta^2}
\int_{I_\delta}\int_{I_\delta}(x^2+y^2)\dd x\dd y=\frac{9}{64}\delta^2=\frac{R^2}{3},
\end{align*}
where the inequality follows from equation~\eqref{xueq43}.  Since $q\ge6$, we have $R^2\le3/64<\frac1{18}$. The containment and moment bounds therefore allow us to apply Lemma~\ref{app-lemma1} to the Karcher center $\bm c$, giving $C \subset B\left(\bm c,\frac{5R}{3}\right)$. Since $\frac{5R}{3}=\frac{15}{\sqrt{8N}}$, assertion \textup{(i)} follows.

(ii) Define
\[
\bm y_*:=N\int_{C}\bm y\dd\sigma(\bm y),
\qquad
V:=N\int_{C}\|\bm y-\bm y_*\|_{\ell_2}^2\dd\sigma(\bm y).
\]
Since $R^2\le1/18$, we have $0<R<\pi/2$.  Applying Lemma~\ref{lem:chord-arc-centroid-integral} with equation~\eqref{eq:cell-support-about-xi} gives $\bm y_*\ne\bm0$.  Thus
\[
\bm z:=\frac{\bm y_*}{\|\bm y_*\|_{\ell_2}}\in\Sph
\]
is well defined.  Then we have
\begin{equation}\label{xueq44}
N\int_{C}\dist(\bm c,\bm y)^2\dd\sigma(\bm y)
\overset{(a)}\le
N\int_{C}\dist(\bm z,\bm y)^2\dd\sigma(\bm y)
\overset{(b)}\le
\left(\frac R{\sin R\cos\frac{R}{2}}\right)^2
V,
\end{equation}
where ($a$) follows from the fact that $\bm c$ minimizes the Karcher energy on $\Sph$, and ($b$) follows from Lemma~\ref{lem:chord-arc-centroid-integral}. We claim that
\begin{equation}
\begin{aligned}
V
\leq\frac{83}{768}\delta^2.
\end{aligned}
\label{eq:cell-euclidean-variance-bound}
\end{equation}
By Taylor estimates we have $\sin R>R-R^3/6$ and $\cos(R/2)>1-R^2/8$.  Combining with $R>0$ and $R^2\le3/64$, we have
\[
\begin{aligned}
\left(\frac{\sin R\cos\frac R2}{R}\right)^2 
&>\left(1-\frac{R^2}{3}\right)
\left(1-\frac{R^2}{4}\right)>1-\frac7{12}R^2
\ge\frac{249}{256}.
\end{aligned}
\]
Combining this estimate with equations~\eqref{eq:cell-euclidean-variance-bound} and~\eqref{xueq44}, we obtain
\begin{align*}
N\int_{C}\dist(\bm c,\bm y)^2\dd\sigma(\bm y)
&<\frac{256}{249}V
\le\frac{256}{249}\cdot \frac{83}{768}\delta^2
=\frac{\delta^2}{9}
=\frac8{3N}.
\end{align*}
This proves equation~\eqref{eq:centered-cell-quarter}.

It remains to prove equation~\eqref{eq:cell-euclidean-variance-bound}. Applying equation~\eqref{eq:cell-pullback-integral}  componentwise gives
\[
\bm y_*=\frac1{\delta^2}\int_{I_\delta}\int_{I_\delta}
\Phi(x,y)\dd x\dd y.
\]
Then, applying Lemma~\ref{lem:weighted-integral-variance} to $\Phi$ with $a=\delta/2$, we have
\begin{equation*}
\begin{aligned}
V&=N\int_{C}\|\bm y-\bm y_*\|_{\ell_2}^2\dd\sigma(\bm y)
 \overset{(a)}=\frac1{\delta^2}\int_{I_\delta}\int_{I_\delta}
\|\Phi(x,y)-\bm y_*\|_{\ell_2}^2\dd x\dd y\\
&\overset{(b)}\le\frac1{\delta^2}\int_{I_\delta}\int_{I_\delta}
\left[
\left(\frac{\delta^2}{8}-\frac{x^2}{2}\right)\widehat{H}_{1,1}
+\left(\frac{\delta^2}{8}-\frac{y^2}{2}\right)\widehat{H}_{2,2}
\right]\dd x\dd y.
\end{aligned}
\end{equation*}
Here, ($a$) follows from equation~\eqref{eq:cell-pullback-integral} by taking $\Psi(\bm y)=\|\bm y-\bm y_*\|_{\ell_2}^2$, and ($b$) follows from $\widehat{H}_{1,1}=\|\partial_x\Phi\|_{\ell_2}^2$ and $\widehat{H}_{2,2}=\|\partial_y\Phi\|_{\ell_2}^2$ almost everywhere. The two weights in the integrand are nonnegative.  If $x^2\ge y^2$, the expression in square brackets equals
\[
\left(\frac{\delta^2}{8}-\frac{x^2}{2}\right)
(\widehat{H}_{1,1}+\widehat{H}_{2,2})+\frac{x^2-y^2}{2}\widehat{H}_{2,2}.
\]
If $y^2\ge x^2$, the analogous identity has the indices interchanged. Since $\widehat{H}_{1,1}+\widehat{H}_{2,2}=\tr\widehat{\bm H}$ and $\widehat{H}_{j,j}\le\lambda_{\max}(\widehat{\bm H})$ for $j=1,2$, it follows that
\[
\begin{aligned}
V\le\frac1{\delta^2}\int_{I_\delta}\int_{I_\delta}
\biggl[&\left(\frac{\delta^2}{8}-\frac{\max(x^2,y^2)}2\right)
\tr\widehat{\bm H}+\frac{|x^2-y^2|}{2}\lambda_{\max}(\widehat{\bm H})\biggr]\dd x\dd y.
\end{aligned}
\]
A direct calculation gives
\[
\begin{aligned}
\int_{I_\delta}\int_{I_\delta}
\left(\frac{\delta^2}{8}-\frac{\max(x^2,y^2)}2\right)\dd x\dd y
=\frac{\delta^4}{16}
\quad\text{and}\quad
\int_{I_\delta}\int_{I_\delta}
\frac{|x^2-y^2|}{2}\dd x\dd y
=\frac{\delta^4}{24}.
\end{aligned}
\]
Moreover, Lemma~\ref{prop:cubed-sphere-metric} implies that almost everywhere,
\[
\tr\widehat{\bm H}\le\frac76,
\qquad
\lambda_{\max}(\widehat{\bm H})\le\frac{27}{32}.
\]
Thus, we have
\begin{equation*}
V \le\delta^2\left(\frac{1}{16}\cdot \frac{7}{6}+\frac{1}{24}\cdot \frac{27}{32}\right)
=\frac{83}{768}\delta^2.	
\end{equation*}
We arrive at equation~\eqref{eq:cell-euclidean-variance-bound}.  This completes the proof.

\end{proof}

\begingroup \raggedright

\endgroup

\end{document}